\allowdisplaybreaks \numberwithin{equation}{section}
\numberwithin{equation}{section}
\newtheorem{theorem}{Theorem}[section]
\newtheorem{proposition}[theorem]{Proposition}
\newtheorem{corollary}[theorem]{Corollary}
\newtheorem{lemma}[theorem]{Lemma}
\theoremstyle{definition}
\newtheorem{remark}[theorem]{Remark}
\newtheorem*{conj}{Conjecture A}
\newtheorem{definition}[theorem]{Definition}
\theoremstyle{remark}
\newtheorem{example}[theorem]{Example}
\newtheorem{counterexample}[theorem]{Counterexample}
\def\d{\mathrm{d}}
\newcommand{\mv}{\mathbf{v}}
\newcommand{\R}{\mathbb{R}}
\newcommand{\LL}{\mathcal{L}}
\newcommand{\M}{\mathcal{M}}
\newcommand{\T}{\mathcal{T}}
\newcommand{\s}{\mathcal{S}(\R^N)}
\begin{document}
\title[On the rigidity of the 2D incompressible Euler equations ]{On the rigidity of the 2D incompressible Euler equations}
	
	\author{Yuchen Wang, Weicheng Zhan}
	
\address{School of Mathematical Science
Tianjin Normal University, Tianjin, 300074,
P.R. China}
\email{wangyuchen@mail.nankai.edu.cn}

\address{School of Mathematical Sciences, Xiamen University, Xiamen, Fujian, 361005, P.R. China}
\email{zhanweicheng@amss.ac.cn}


	\begin{abstract}
We consider rigidity properties of steady Euler flows in two-dimensional bounded domains. We prove that steady Euler flows in a disk with exactly one interior stagnation point and tangential boundary conditions must be circular flows, which confirms a conjecture proposed by F. Hamel and N. Nadirashvili in [J. Eur. Math. Soc., 25 (2023), no. 1, 323-368]. Moreover, for steady Euler flows on annuli with tangential boundary conditions, we prove that they must be circular flows provided there is no stagnation point inside, which answers another open problem proposed by F. Hamel and N. Nadirashvili in the same paper. We secondly show that the no-slip boundary conditions would result in absolute rigidity in the sense that except for the disks (\emph{resp}. annuli), there is no other smooth simply-connected (\emph{resp}. doubly-connected) bounded domain on which there exists a steady flow with only one (\emph{resp}. no) interior stagnation point and no-slip boundary conditions, and if present, the flow must be circular as well. The arguments are based on the geometry of streamlines of the flow and `local' symmetry properties for the non-negative solutions of semi-linear elliptic problems.
	\end{abstract}
	
	\maketitle{\small{\bf Keywords:} The incompressible Euler equations, Rigidity, Circular flows, Liouville theorem. \\

	
	\section{Introduction and Main results}
Consider stationary solutions of the two-dimensional incompressible Euler equations for an ideal fluid in a planar domain $D$
	\begin{align}\label{1-1}
		\begin{cases}
			\mathbf{v}\cdot \nabla \mathbf{v} =-\nabla P&\text{in}\ \, D,\\
			\nabla\cdot\mathbf{v}=0\,\ \, \ \ \ \ \ \ \  \ \, &\text{in}\ \, D,
\end{cases}
	\end{align}
where $\mathbf{v}=(v_1, v_2)$ is the velocity field and $P$ is the scalar pressure. Throughout this paper, the solutions $\mathbf{v}$ and $P$ are always understood in the classical sense, that is, they are (at least) of class $C^1$ and satisfy \eqref{1-1} everywhere in $D$. When the boundary $\partial D$ is not empty, a suitable boundary condition is needed such that the problem is well-posed. We shall explain this in detail later.

We are concerned in this paper with rigidity properties of two-dimensional steady Euler flows. Specifically, our goal is twofold, to investigate the effects of the boundary geometry on steady Euler flows, and to study the rigidity of steady fluids due to specific boundary conditions.

\subsection{Inheritance of the geometric symmetry of the domain}
We would like to address the following fundamental question pertaining to steady configurations
of fluid motion:

\smallskip
\emph{Given a domain $D$ with certain symmetry, to what extent must steady fluid states $\mathbf{v}$ inherit the geometric symmetry properties of the domain\,?}
\smallskip

It is a classical issue in fluid mechanics and received a lot of attention recently. An enormous amount of progress has been made, especially thanks to a series of works by Hamel and Nadirashvili. In \cite{HN3, HN1}, Hamel and Nadirashvili proved that in a strip, steady flows with no stagnation point and tangential boundary conditions is a parallel shear flow. The same conclusion also holds for a \emph{bounded} steady flow in a half-plane or the whole plane. Here the stagnation points of a flow $\mathbf{v}$ are the points $x$ for which $|\mv(x)|=0$. Recently, Hamel and Nadirashvili \cite{HN} considered steady Euler flows in bounded annuli, as well as in complements of disks, in punctured disks and in the punctured plane. They showed that if the flow does not have any stagnation point and satisfies the tangential boundary conditions together with further conditions at infinity in the case of unbounded domains and at the center in the case of punctured domains, then the flow is \emph{circular}, namely the streamlines are concentric circles.

  Besides these remarkable progress, it is worth pointing out that an important case is still left open, that is, the domain is a disk. Unlike the cases mentioned above, steady flows in a disk with tangential boundary conditions will possess at least one stagnation point, because the boundary of the disk is a closed streamline. It can be seen that all of the aforementioned known results strongly suggest that steady Euler flows would inherit the geometric symmetry of the domain well, and hence enjoy nice rigidity properties. Based on these facts, Hamel and Nadirashvili further proposed the following conjecture:

\begin{conj}[\cite{HN}, Conjecture 1.12]\label{con1}
  Let $D$ be an open disk centered at the origin. Let $z\in D$ and let $\mathbf{v}\in C^2(\overline{D}\backslash \{z\})$ be a bounded flow solving the Euler equations \eqref{1-1} with $D$ replaced by $D\backslash\{z\}$ and $\mathbf{v}\cdot \mathbf{n}=0$ on $\partial D$, where $\mathbf{n}$ denotes the outward unit normal on $\partial D$. Assume that $|\mathbf{v}|>0$ in $\overline{D}\backslash\{z\}$. Then $z$ is the origin and $\mathbf{v}$ is a circular flow.
\end{conj}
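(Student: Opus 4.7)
The plan is to recast the problem in terms of the stream function $\psi$, show that $\psi$ is a positive solution of a semi-linear elliptic equation $-\Delta\psi=f(\psi)$ in the disk with zero Dirichlet data and a unique interior maximum at $z$, and then conclude by a moving-plane symmetry argument that $\psi$ is radial about the centre of $D$ (forcing $z$ to be that centre and $\mv$ to be circular).

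Because $|\mv|>0$ on $\overline{D}\setminus\{z\}$ and $\partial D$ is a closed streamline (thanks to the tangential boundary condition), a Poincar\'e--Bendixson type argument on the planar vector field $\mv$ shows that every orbit in $D\setminus\{z\}$ is a Jordan curve encircling $z$ and that these orbits, together with $\{z\}$, foliate $\overline{D}$. The flux of $\mv$ across any such streamline vanishes, so the $1$-form $-v_2\,\d x+v_1\,\d y$ is exact on $D\setminus\{z\}$, yielding a single-valued stream function $\psi\in C^3(\overline{D}\setminus\{z\})$ with $\mv=\nabla^\perp\psi$; I normalize $\psi\equiv 0$ on $\partial D$ and, after a sign change if needed, assume $\psi>0$ on $D\setminus\{z\}$. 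Boundedness of $\mv$ makes $\psi$ Lipschitz up to $z$, with $\psi(z)=\max_{\overline{D}}\psi$. The stationary vorticity equation $\mv\cdot\nabla\omega=0$ (with $\omega=-\Delta\psi$) forces $\omega$ to be constant on each streamline; parameterising streamlines by the value of $\psi$ produces a function $f:(0,\psi(z))\to\R$, of class $C^1$ by the regularity of $\mv$ away from $z$, such that
\[
-\Delta\psi=f(\psi)\quad\text{in }D\setminus\{z\},\qquad \psi=0\ \text{on}\ \partial D.
\]
The removability of isolated singularities in $\R^2$ for bounded distributional solutions of Poisson-type equations then lets me extend the PDE across $z$ and upgrade $\psi$ to $W^{2,p}_{\mathrm{loc}}(D)$ for every $p<\infty$.

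With this reduction in hand, I would run a Gidas--Ni--Nirenberg style moving-plane argument: for each unit vector $e$, slide the hyperplane $\{x\cdot e=\la\}$ inward from $\partial D$ and compare $\psi$ with its reflection $\psi^\la$ across the hyperplane on the shrinking cap. The difference $w=\psi^\la-\psi$ satisfies a linear elliptic inequality $-\Delta w\ge c(x)w$ with bounded coefficient $c$ coming from a Lipschitz bound on $f$ on the relevant sub-range of values of $\psi$, and the strong maximum principle together with Hopf's boundary lemma force the critical value of $\la$ to put the hyperplane through the centre of $D$. Iterating over all directions $e$ yields radial symmetry of $\psi$ about the centre of $D$; the unique interior critical point $z$ must then coincide with that centre, so $\mv=\nabla^\perp\psi$ is a circular flow.

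The principal obstacle is that $f$ is not known to be globally Lipschitz---in particular near the critical value $\psi(z)$, where the streamlines degenerate to the point $z$, the classical hypothesis of the Gidas--Ni--Nirenberg theorem may fail---and that $\psi$ need not be $C^2$ at the stagnation point $z$. I would address both by carrying out the moving-plane comparison only on super-level sets $\{\psi>\ep\}$ that stay a positive distance from $z$ and then letting $\ep\downarrow 0$; this is precisely the ``local'' symmetry philosophy for semi-linear elliptic problems alluded to in the abstract, and is where the main technical difficulty of the proof lies.
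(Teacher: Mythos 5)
You are attempting to prove the full conjecture, but the paper itself only establishes it under the additional hypothesis $\omega\in L^1(D)$ (Theorem \ref{th1}) and explicitly states that removing this integrability assumption is open. The gap in your proposal sits exactly at that point, namely the sentence ``the removability of isolated singularities \ldots lets me extend the PDE across $z$.'' Boundedness of $\psi$ is not sufficient for this. To make $-\Delta\psi=f(\psi)$ hold in the distributional sense across $z$ --- which is what both your $W^{2,p}_{\mathrm{loc}}$ upgrade and any symmetry theorem for weak solutions on the whole disk require --- you need $f(\psi)=-\omega$ to be locally integrable near $z$. But Proposition \ref{pro1} only gives $f\in C([0,\psi(z)))\cap C^1((0,\psi(z)))$, and nothing prevents $f(s)$ from blowing up as $s\to\psi(z)^-$, i.e.\ the vorticity may be non-integrable at the exceptional point (Example \ref{ex0-1} already has $\omega=1/|x|$, which is unbounded though still $L^1$; the hypotheses of the conjecture do not exclude worse). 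Classical removable-singularity theorems for bounded solutions apply to harmonic functions or to equations with controlled right-hand side, neither of which you have here. This is precisely where the paper's standard cut-off argument invokes $\omega\in L^1(D)$, so your proposal silently reintroduces the very hypothesis whose removal constitutes the open problem.

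Even granting the extension across $z$, your symmetry step departs from the paper's and does not close. The paper emphasizes that the moving plane method fails in general for a nonlinearity that is merely continuous, and that $f$ here need not be Lipschitz near the critical value $\psi(z)$; its proof of Theorem \ref{th1} uses no moving planes at all, but rather Brock's continuous Steiner symmetrization (Proposition \ref{pro2}), which yields ``local symmetry'' (radially decreasing on countably many disjoint annuli and flat elsewhere) for continuous $f$, after which the uniqueness of the interior critical point collapses the decomposition to a single annulus. Your proposed fix --- running the plane only on a region excised around $z$ and letting $\ep\downarrow 0$ --- is a different (pointwise, maximum-principle) argument, and as stated it stalls: once the moving plane reaches positions where the excised neighbourhood $\{\psi>\psi(z)-\ep\}$ meets its own reflection, the required boundary inequality $\psi^{\lambda}\ge\psi$ on the reflected level curve is no longer available, so the continuation cannot in general be pushed to the centre. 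This truncated moving plane yields only the partial conclusion that $z$ lies on each candidate symmetry line (this is exactly how the paper uses it in Section \ref{s6} to prove Theorem \ref{th11}), not radial symmetry of $\psi$. A minor but symptomatic slip: the sets you want to excise are the super-level neighbourhoods of $z$, so the comparison region is $\{\psi<\psi(z)-\ep\}$; the sets $\{\psi>\ep\}$ you name all contain $z$.
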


 Here a flow $\mathbf{v}$ is said to be circular (with respect to the origin) if $\mathbf{v}(x)$ is parallel to the vector $\mathbf{e}_\theta(x)=\left(-x_2/|x|, x_1/|x|\right)$ at every point $x\in D\backslash\{0\}$. We would like to point out that although this conjecture is quite natural, a rigorous proof seems highly non-trivial. The potential occurrence of singularities poses several challenges for mathematical analysis; see pages 332-333 in \cite{HN} for some discussions of  main difficulties.

 The first purpose of this paper is to provide an affirmative answer to Conjecture A under the additional condition that the vorticity $\omega:=\partial_1v_2-\partial_2v_1$ is Lebesgue integrable. More precisely, we will establish the following slightly stronger result.

\begin{theorem}\label{th1}
  Let $D$ be an open disk centered at the origin. Let $z\in D$ and let $\mathbf{v}\in C^2(\overline{D}\backslash \{z\})$ be a bounded flow solving the Euler equations \eqref{1-1} with $D$ replaced by $D\backslash\{z\}$ and $\mathbf{v}\cdot \mathbf{n}=0$ on $\partial D$, where $\mathbf{n}$ denotes the outward unit normal on $\partial D$. Assume that $|\mathbf{v}|>0$ in $D\backslash\{z\}$ and $\omega\in L^1(D)$. Then $z$ is the origin and $\mathbf{v}$ is a circular flow.
\end{theorem}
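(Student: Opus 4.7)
The plan is to work with the stream function $\psi$ of $\mv$ (defined by $\mv=\nabla^\perp\psi$), normalized by $\psi=0$ on $\partial D$. Since $\mv$ is bounded, $\mv\cdot\mathbf{n}=0$ on $\partial D$, and $D$ is simply connected, $\psi$ is well-defined on $D\setminus\{z\}$, extends continuously to $\overline D$, belongs to $C^2(\overline D\setminus\{z\})$ and has bounded gradient. Because $|\mv|=|\nabla\psi|>0$ on $D\setminus\{z\}$, $z$ is the unique critical point of $\psi$ in $D$; since $\psi\equiv 0$ on $\partial D$ and $\psi\not\equiv 0$, the extremum of $\psi$ is attained only at $z$, and after a sign change we may assume $\psi>0$ in $D$ with $\max_{\overline D}\psi=\psi(z)$. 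The implicit function theorem together with a standard continuation argument---starting from the single simple closed curve $L_0=\partial D$, using the simple connectedness of $D$ and the absence of critical values of $\psi$ in $(0,\psi(z))$---then shows that the level sets $L_c=\{\psi=c\}$ for $c\in(0,\psi(z))$ are smooth simple closed curves monotonically nesting around $z$ and foliating $D\setminus\{z\}$. In particular, the streamlines of $\mv$ in $D\setminus\{z\}$ coincide with the $L_c$'s.

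Next I would reduce the Euler system to a semilinear elliptic equation. The transport identity $\mv\cdot\nabla\omega=0$ forces $\omega$ to be constant along each streamline $L_c$; since each $L_c$ is a single connected level set of $\psi$, there is a well-defined measurable function $f$ with $\omega=f(\psi)$ on $D\setminus\{z\}$. Combining this with $\omega=\Delta\psi$ yields the semilinear equation
\[
\Delta\psi=f(\psi)\quad\text{on }D\setminus\{z\}.
\]
The hypothesis $\omega\in L^1(D)$ guarantees that $f(\psi)\in L^1(D)$ and that $\psi$ solves the equation in the sense of distributions on the whole of $D$, with no concentrated vorticity hidden at the singular point $z$.

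The central step is to derive the radial symmetry of $\psi$ about $z$ from this semilinear PDE. I would invoke a ``local symmetry'' result for non-negative solutions of semilinear elliptic equations---in the spirit of Serrin, Pucci and Damascelli--Pacella---adapted to the low-regularity setting: the statement is that a non-negative solution of $\Delta\psi=f(\psi)$ admitting a unique interior maximum $z$ and a foliation by closed level curves around $z$ must be radially symmetric about $z$. Once $\psi$ is radial about $z$, every $L_c$ is a circle centered at $z$; in particular $L_0=\partial D$ is a circle centered at $z$, but by hypothesis $\partial D$ is centered at the origin, so $z=0$. Then $\mv=\nabla^\perp\psi$ is everywhere tangent to the concentric circles $L_c$ about the origin and is therefore a circular flow.

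The main technical obstacle I anticipate is the rigorous execution of the local symmetry step under the present hypotheses: $f$ is only measurable, $\psi$ fails to be $C^2$ at $z$, and the standard Gidas--Ni--Nirenberg moving-plane argument---which classically assumes Lipschitz $f$ and $C^2$ regularity up to the axis of reflection---must be carefully adapted. The $L^1$ assumption on $\omega$ is precisely what is needed to make sense of the comparisons in a weak/distributional sense across $z$ and to apply strong maximum principles and Hopf-type lemmas for sub/supersolutions through the singular point, so as to propagate the symmetry information from the regular region $D\setminus\{z\}$ up to $z$ and hence globally.
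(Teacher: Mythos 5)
Your reduction to a semilinear problem and the use of $\omega\in L^1(D)$ to extend the equation weakly across $z$ match the paper (its Proposition~\ref{pro1} in fact gives $f$ continuous on $[0,u(z))$, not merely measurable). The genuine gap is in your central symmetry step, and it is twofold. First, the lemma you propose to invoke --- that a non-negative solution of $\Delta\psi=f(\psi)$ with a unique interior maximum and a foliation by nested closed level curves must be radial about that maximum --- is false as stated, because it makes no use of the shape of $D$: on an ellipse the function $\psi=1-x_1^2/a^2-x_2^2/b^2$ satisfies $\Delta\psi=\mathrm{const}=f(\psi)$, has a unique interior maximum and nested closed level curves, yet is not radial (this is the constant-vorticity Euler flow in an ellipse). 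Any usable symmetry statement must exploit that $D$ is a disk, i.e.\ it must be a Gidas--Ni--Nirenberg-type theorem in a ball rather than a Serrin/Damascelli--Pacella-type interior statement; your proof only uses the disk shape at the very end, to locate $z$, which is too late.

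Second, even on the disk the moving-plane route you sketch is exactly what the paper explains cannot work here: the nonlinearity $f$ is in general only continuous, with infinite slope at an endpoint of its domain (see Examples~\ref{ex2-1} and~\ref{ex2-2}), and the moving plane method fails in general for non-Lipschitz $f$ (the paper cites a counterexample of Brock). The $L^1$ hypothesis on $\omega$ does not repair this; it is used only to make $u$ a weak solution across $z$. The paper's actual mechanism is Brock's continuous Steiner symmetrization (Proposition~\ref{pro2}), which for merely continuous $f$ yields only \emph{local} symmetry: $u$ is radial and decreasing on a countable union of pairwise disjoint annuli and has vanishing gradient elsewhere. The decisive final step, absent from your proposal, is that the uniqueness of the critical point of $u$ in $D$ forces this union to consist of a single annulus filling $D\setminus\{z\}$, whose outer boundary is $\partial D$; this upgrades local symmetry to global radial symmetry about the origin and yields $z=0$. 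Without Brock's theorem (or a substitute for it), your argument does not close.
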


 Theorem \ref{th1} is a Liouville-type theorem for steady solutions of the incompressible Euler equations. Under the hypothesis of Theorem \ref{th1}, it follows that the potential isolated singularity can only occur at the center of the disk, and the steady flow exhibits (trivial) circular characteristics. There are many precise circular flows and we would like to give an example as follows.

\begin{example}\label{ex0-1}
  Consider the flow given by
  \begin{equation*}
   \mv(x)=\mathbf{e}_\theta(x),\ \ \ P(x)=\ln |x|.
  \end{equation*}
It is easy to check that $(\mv, P)$ satisfies the Euler equations \eqref{1-1} in the punctured unit disk. The vorticity of the flow is given by $\omega(x)=1/|x|$.
\end{example}

It is worth pointing out that the conclusion of Theorem \ref{th1} does not hold in general without the assumption that the flow $\mv$ is bounded. Otherwise, it would contradict with the following example. We begin with some notation. For $x\in \R^2$ and $r>0$,
\begin{equation*}
  B_r(x)=\left\{y\in \R^2: |y-x|<r \right\}
\end{equation*}
denotes the open disk with center $x$ and radius $r$. We shall write $B_r=B_r(0)$ for simplicity. Let $\mathbf{e}_r(x)=\left(x_1/|x|, x_2/|x|\right)$. We have the following counterexample (see also pages 229-330 in \cite{HN}):

\begin{counterexample}
  Consider the flow given by
  \begin{equation*}
   \mv(x)=\left[\left(1+\frac{1}{r^2}\right)\cos\theta \right]\mathbf{e}_\theta(x)+\left[\left(1-\frac{1}{r^2}\right)\sin\theta \right]\mathbf{e}_r(x),\ \ \ P(x)=-\frac{|\mv|^2}{2}
  \end{equation*}
  in the usual polar coordinates. The vorticity of the flow is identically zero. One can easily verify that $(\mv, P)$ satisfies the Euler equations \eqref{1-1} with tangential boundary conditions in the punctured unit disk $B_1\backslash\{0\}$. It has only two stagnation points in $\overline{B_1}\backslash\{0\}$ and they both lie on $\partial B_1$. However, $\mv$ is not a circular flow.
\end{counterexample}

The auxiliary assumption on the integrability of vorticity is technical, it is introduced to mitigate the singularity near the exceptional point of the flow and exclude bound flows with a highly oscillating profile. On the other hand, this condition also physically makes sense since the steady Euler flows could be extended onto the whole disk  in the sense of distribution. It is a very interesting and highly non-trivial question whether the integrability can be removed.

In particular, the following noteworthy result can be derived from Theorem \ref{th1}.

 \begin{corollary}\label{cor1}
  Let $D$ be an open disk centered at the origin. Let $\mathbf{v}\in C^2(\overline{D})$ satisfy the Euler equations \eqref{1-1} with $\mathbf{v}\cdot \mathbf{n}=0$ on $\partial D$, where $\mathbf{n}$ denotes the outward unit normal on $\partial D$. Assume that $z\in D$ is the only stagnation point of $\mathbf{v}$ in $D$, that is, $|\mathbf{v}(z)|=0$ and $|\mathbf{v}|>0$ in $D\backslash\{z\}$. Then $z$ is the origin and $\mathbf{v}$ is a circular flow.
 \end{corollary}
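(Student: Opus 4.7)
The plan is simply to verify that the hypotheses of Theorem \ref{th1} are all implied by the (stronger) hypotheses of the corollary, and then to invoke Theorem \ref{th1}. No additional machinery should be required; the corollary is the interpretation of Theorem \ref{th1} in the ``classical'' setting where the flow is genuinely smooth up to the boundary.

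First I would address regularity. Since $\mathbf{v}\in C^2(\overline{D})$, trivially $\mathbf{v}\in C^2(\overline{D}\setminus\{z\})$. Being continuous on the compact set $\overline{D}$, the velocity $\mathbf{v}$ is bounded, which supplies the boundedness hypothesis of Theorem \ref{th1}. Likewise, the scalar vorticity $\omega=\partial_1 v_2-\partial_2 v_1$ lies in $C(\overline{D})$, hence is bounded; since $D$ is a disk and in particular has finite Lebesgue measure, $\omega\in L^\infty(D)\subset L^1(D)$, which yields the integrability assumption. Thus the additional technical hypothesis built into Theorem \ref{th1} is automatic here, and this is really the only point of substance in the argument.

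Next I would check the equations and boundary data. The Euler system \eqref{1-1} is assumed to hold on $D$, so a fortiori on the smaller open set $D\setminus\{z\}$ (the pressure $P$ restricts to a classical solution on this punctured set as well). The tangential condition $\mathbf{v}\cdot\mathbf{n}=0$ on $\partial D$ is part of the hypotheses of the corollary, so it transfers directly. Finally, the condition $|\mathbf{v}|>0$ in $D\setminus\{z\}$ is exactly the assumption that $z$ is the only interior stagnation point, which is again among the hypotheses.

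With all the hypotheses of Theorem \ref{th1} verified, its conclusion applies verbatim: the exceptional point $z$ must coincide with the origin, and $\mathbf{v}$ is a circular flow. There is no real obstacle here—the work has already been done inside Theorem \ref{th1}; the corollary only records that, because $C^2$ regularity on $\overline{D}$ forces boundedness of both $\mathbf{v}$ and $\omega$, the auxiliary $L^1$ integrability hypothesis of Theorem \ref{th1} disappears in the classical framework.
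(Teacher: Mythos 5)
Your proposal is correct and matches the paper's intent exactly: the paper states the corollary as an immediate consequence of Theorem \ref{th1}, and the only point of substance is precisely the one you identify, namely that $C^2$ regularity up to $\overline{D}$ makes $\mathbf{v}$ bounded and forces $\omega\in C(\overline{D})\subset L^1(D)$, so the auxiliary integrability hypothesis is automatic. Nothing further is needed.
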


 Corollary \ref{cor1} shows that \emph{any} smooth steady Euler flow in disks with exactly one interior stagnation point must be a circular flow, isolating such configurations from non-circular steady states. This addresses an intriguing open issue proposed in \cite{HN} (see also the discussion in \S 3.1 of the survey paper \cite{Dri}). Below is an example of such flow configurations.
\begin{example}\label{ex0}
  Consider the smooth flow given by
  \begin{equation*}
   \mv(x)=2\sin(|x|^2)(x_2, -x_1),\ \ \ P(x)=-2\sin^2(|x|^2)|x|^2-F(\cos(|x|^2)),
  \end{equation*}
  where
  \begin{equation*}
     F(s) =4\int_{0}^{s}\left(t\arccos t+\sqrt{1-t^2}\right)\d t.
  \end{equation*}
  It is easy to check that $(\mv, P)$ satisfies the Euler equations \eqref{1-1} with tangential boundary conditions in $B_{\sqrt{\pi}}$.
\end{example}


It is known that in a bounded convex domain, the uniqueness of the interior stagnation point is satisfied if the flow is stable in the sense of Arnold; see \cite{Nad}. An interesting issue is to determine the possible location of the unique interior stagnation point (in the ansatz). Our next result shows that if the domain is \emph{Steiner symmetric} with respect to a certain direction, then the unique interior stagnation point of a steady flow can only lie on the corresponding symmetry line. Here a domain is referred to as Steiner symmetric with respect to a given direction if it is convex in that direction and reflectionally symmetric about a line orthogonal to that direction.

\begin{theorem}\label{th11}
   Let $D$ be a $C^2$ non-empty simply connected bounded domain of $\mathbb{R}^2$, such that it is Steiner symmetric with respect to the direction $\mathbf{e}_1=(1, 0)$ and the symmetry line is given by $\Gamma_0=\left\{x\in \R^2: x_1=0 \right\}$. Let $\mathbf{v}\in C^2(\overline{D})$ satisfy the Euler equations \eqref{1-1} with $\mathbf{v}\cdot \mathbf{n}=0$ on $\partial D$, where $\mathbf{n}$ denotes the outward unit normal on $\partial D$.

   Assume that $\mv$ has only one interior stagnation point $z \in D$. Then $z\in \Gamma_0$.
\end{theorem}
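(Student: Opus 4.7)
The plan is to pass to a stream function formulation and then apply a moving-plane / local-symmetry argument along the direction $\mathbf{e}_1$. Since $D$ is simply connected and $\mv\cdot\mathbf{n}=0$ on $\partial D$, there exists a stream function $\psi\in C^3(\overline{D})$ with $\mv=\nabla^\perp\psi$ and $\psi\equiv 0$ on $\partial D$; the stagnation points of $\mv$ correspond bijectively to the critical points of $\psi$, so $z$ is the unique interior critical point of $\psi$. Since $\psi\equiv 0$ on $\partial D$ and $\psi$ is not identically zero, $\psi$ must keep a constant strict sign on $D$, and after replacing $\psi$ by $-\psi$ I may assume $\psi>0$ in $D$ with $z$ the unique interior maximum. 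The Euler equations give $-\Delta\psi=\omega$, and the transport of $\omega$ along streamlines makes $\omega$ constant on each connected component of every level set of $\psi$. Under the unique-critical-point hypothesis, each regular level set $\{\psi=c\}$ with $0<c<\psi(z)$ reduces (by a gradient-flow argument in the simply connected domain $D$) to a single Jordan curve encircling $z$, so that $\omega$ is in fact a continuous function of $\psi$:
\begin{equation*}
  -\Delta\psi=f(\psi)\quad\text{in } D,\qquad \psi=0\ \text{on } \partial D,\qquad \psi>0\ \text{in } D.
\end{equation*}

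I would next run the moving-plane method in the $\mathbf{e}_1$-direction. For $\la>0$, set $D_\la=D\cap\{x_1>\la\}$, $x^\la=(2\la-x_1,x_2)$, and $\psi_\la(x)=\psi(x^\la)$. Steiner convexity of $D$ along $\mathbf{e}_1$ gives $x^\la\in D$ whenever $x\in D_\la$, so $\psi_\la$ solves $-\Delta\psi_\la=f(\psi_\la)$ on $D_\la$, with $\psi_\la\ge\psi$ on $\partial D_\la$. Starting with $\la$ close to $\la^{*}:=\max\{x_1:x\in\overline{D}\}$ (where $D_\la$ is a narrow cap and a maximum principle applies) and decreasing $\la$, the goal is to establish $\psi_\la\ge\psi$ on $D_\la$ down to $\la=0$; together with the symmetric statement coming from the left, this would yield $\psi(-x_1,x_2)=\psi(x_1,x_2)$ and the strict monotonicity $\partial_{x_1}\psi<0$ on $D\cap\{x_1>0\}$. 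Every critical point of $\psi$ would then have to lie on $\Gamma_0$, forcing $z\in\Gamma_0$.

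The main obstacle is that the nonlinearity $f$ constructed above is only continuous: it is not a prescribed Lipschitz function but is defined implicitly through $\psi$ itself. Consequently the Gidas--Ni--Nirenberg framework, with its Lipschitz assumption on $f$, is not directly applicable, and the usual maximum-principle step in the moving-plane comparison may fail. I would instead invoke a \emph{local symmetry} version of the method, of the type developed by Brock and by Damascelli--Pacella, valid for merely continuous nonlinearities. At the critical stopping level $\la_0$ it yields a dichotomy: either the global reflection identity $\psi\equiv\psi_{\la_0}$ holds, or $D_{\la_0}$ decomposes into reflection-invariant subregions on which $\psi$ and $\psi_{\la_0}$ coincide. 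The uniqueness of the interior critical point rules out the second alternative, since any non-trivial such decomposition would, via reflection, produce a second interior critical point of $\psi$ distinct from $z$. Hence the global symmetry alternative must hold at $\la_0=0$, delivering $z\in\Gamma_0$.
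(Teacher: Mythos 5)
Your reduction to the semilinear problem $-\Delta\psi=f(\psi)$ and your overall plan (moving planes combined with a Brock-type local-symmetry input) match the paper's strategy, but the step that actually carries the proof is missing. The difficulty you correctly identify --- that $f$ is only continuous, so the moving-plane comparison can fail --- is not resolved by invoking a ``local symmetry version of the moving plane method'' for merely continuous nonlinearities: no off-the-shelf result with the dichotomy you describe is available in that generality (Brock's local symmetry comes from continuous Steiner symmetrization, not from moving planes, and the paper itself recalls, citing Brock, that the moving plane method genuinely fails for non-Lipschitz $f$). Moreover, your way of discarding the second branch of the dichotomy is unjustified: in a Brock-type decomposition the residual set is only a set where $\partial_{x_1}\psi=0$, not where $\nabla\psi=0$, so a non-trivial decomposition does not by itself produce a second critical point of $\psi$.

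The paper's resolution is different, and it is the key idea you are missing. By Proposition \ref{pro1} the nonlinearity satisfies $f\in C^1([0,1))$, i.e.\ it is $C^1$ everywhere except at the maximal value $u(z)=1$. Arguing by contradiction with $z_1>0$, one chooses a level curve $\Im_\varepsilon=\{u=1-\varepsilon\}$ so close to $z$ that it lies entirely in $\{x_1>0\}$, and then runs the \emph{classical} moving plane method from the left (with $\lambda$ increasing from $\min_{x\in D}x_1$ up to $0$) on the region $\mathcal{H}_\lambda\setminus\mathcal{R}_\lambda(\overline{U_\varepsilon})$, where $u$ and its reflection take values only in $[0,1-\varepsilon]$ and $f$ is therefore Lipschitz; this yields $\partial_{x_1}u>0$ on $D\cap\{x_1<0\}$. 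Separately, Brock's single-direction local symmetry (Proposition \ref{pro100}) gives the decomposition of $D\setminus\{z\}$ into reflected pairs plus a set where $\partial_{x_1}u=0$; combined with the strict monotonicity on the left half, this forces the decomposition to consist of a single pair reflected about $\Gamma_0$, hence $\partial_{x_1}u\neq0$ off $\Gamma_0$ and $z\in\Gamma_0$, contradicting $z_1>0$. Note finally that moving the plane in from the right, as you propose, immediately places the point $z$ (where $f$ loses regularity) inside the moving cap, which is exactly the configuration the paper's argument is designed to avoid.
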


For example, if $D$ is an ellipse, then the unique interior stagnation point can only be located at the center, since it is the only point where two symmetrical lines intersect. In particular, in the case of a disk, the unique interior stagnation point must be the center of the disk, which is consistent with the result of Theorem \ref{th1}.


Next, let us now consider the case of bounded two-dimensional annuli. We would like to address another open problem proposed in \cite{HN}, which is concerned with the rigidity of steady Euler flows in annuli. Let us start with some notation.

For $0<a<b<+\infty$, let $\Omega_{a, b}=\{x\in \mathbb{R}^2: a<|x|<b\}$, $C_a=\{x\in \mathbb{R}^2: |x|=a\}$ and $C_b=\{x\in \mathbb{R}^2: |x|=b\}$. In \cite{HN}, Hamel and Nadirashvili established the following result.
\begin{theorem}[\cite{HN}, Theorem 1.2]
  Let $D=\Omega_{a, b}$. Let $\mv\in C^2(\overline{\Omega_{a, b}})$ satisfy the Euler equations \eqref{1-1} with $\mathbf{v}\cdot \mathbf{n}=0$ on $\partial D$, where $\mathbf{n}$ denotes the outward unit normal on $\partial D$. Assume moreover that
  \begin{equation}\label{1-3}
    \left\{x\in \overline{\Omega_{a, b}}: |\mv(x)|=0 \right\}\subsetneq C_a\ \ \ \text{or}\ \ \     \left\{x\in \overline{\Omega_{a, b}}: |\mv(x)|=0 \right\}\subsetneq C_b.
  \end{equation}
  Then $|\mv|>0$ in $\overline{\Omega_{a, b}}$ and $\mv$ is a circular flow, and there is a $C^2([a, b])$ function $V$ with constant strict sign such that
  \begin{equation*}
    \mv(x)=V(|x|)\mathbf{e}_\theta(x)\ \ \ \text{for all }\, x\in \overline{\Omega_{a, b}}.
  \end{equation*}
\end{theorem}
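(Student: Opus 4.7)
My plan is to translate the problem into a semilinear elliptic problem for a stream function and then use streamline geometry to force rotational symmetry. Because $\mv\cdot\mathbf{n}=0$ on $\partial\Omega_{a,b}$ and $\Omega_{a,b}$ is doubly connected, the vanishing boundary fluxes on $C_a$ and $C_b$ guarantee a globally well-defined $C^3$ stream function $u$ with $\mv=(-\partial_2u,\partial_1u)$; the tangential condition forces $u\equiv\alpha$ on $C_a$ and $u\equiv\beta$ on $C_b$ for constants $\alpha\neq\beta$ (since $\mv\not\equiv 0$), and $|\nabla u|=|\mv|$. Taking the curl of \eqref{1-1} yields $\mv\cdot\nabla\omega=0$ for $\omega=-\Delta u$, so the vorticity is constant along every streamline, and Bernoulli's law gives the same for $P+|\mv|^2/2$.

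The next step would be to upgrade \eqref{1-3} to $|\mv|>0$ on all of $\overline{\Omega_{a,b}}$. Assume the stagnation set is a proper subset of $C_a$, so $|\mv|>0$ on $C_b$. Then $|\nabla u|>0$ on $C_b$, and by continuity it remains nonzero in a one-sided neighborhood of $C_b$, producing a foliation of that neighborhood by smooth Jordan curves homotopic to $C_b$. Since the interior is stagnation-free, a Poincar\'e--Bendixson analysis for the Hamiltonian vector field $\mv=(-\partial_2u,\partial_1u)$ propagates this foliation across the annulus up to $C_a$: no limit Jordan curve can be pinched through an interior critical point. A Hopf-lemma / unique-continuation argument applied to $u-\alpha\geq 0$ near $C_a$ then forces $|\nabla u|>0$ on $C_a$ as well, contradicting the assumed stagnation point on $C_a$. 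The case where the stagnation set sits in $C_b$ is symmetric, so $|\mv|>0$ on $\overline{\Omega_{a,b}}$.

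With $|\nabla u|>0$ on the closed annulus, $u$ is a $C^2$ submersion onto $[\alpha,\beta]$ whose level sets foliate $\Omega_{a,b}$ by Jordan curves homotopic to $C_a$, and the constancy of $\omega$ along these curves yields a continuous $f:[\alpha,\beta]\to\R$ with $\omega=f(u)$, so
\begin{equation*}
-\Delta u=f(u)\ \text{ in }\Omega_{a,b},\qquad u=\alpha\ \text{ on }C_a,\qquad u=\beta\ \text{ on }C_b.
\end{equation*}
To finish, I would show $u$ is radial. Formally, the angular derivative $w:=x_1\partial_2u-x_2\partial_1u$ vanishes on $\partial\Omega_{a,b}$ and, when $f\in C^1$, satisfies the linearized equation $-\Delta w=f'(u)w$; the correct framework for merely continuous $f$ is a ``local symmetry'' result in the spirit of Serrin/Gidas--Ni--Nirenberg adapted to the annulus, combining the co-area formula with the Euler identities $\omega\equiv f(u)$ and $P+|\mv|^2/2\equiv g(u)$ along each Jordan level curve. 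These integral constraints force every level curve of $u$ to be a concentric circle, hence $u(x)=U(|x|)$ with $U\in C^2([a,b])$ and $U'$ of constant strict sign, and setting $V:=-U'$ gives $\mv=V(|x|)\mathbf{e}_\theta$.

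The hardest step is the final radial-symmetry argument. On simply connected domains, radial symmetry of $-\Delta u=f(u)$ solutions would follow from Gidas--Ni--Nirenberg via moving planes, but on an annulus that method breaks because hyperplane reflections send points of $\Omega_{a,b}$ into the central hole, so no direct moving-plane argument is available. The technical core is therefore to exploit the Euler-specific rigidity (constancy of both $\omega$ and $P+|\mv|^2/2$ on every closed streamline) together with the global foliation structure to force every streamline to be a circle, which is strictly stronger than what pure PDE techniques for $-\Delta u=f(u)$ on $\Omega_{a,b}$ would give.
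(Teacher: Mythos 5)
This statement is quoted from \cite{HN} and the paper gives no proof of it; the closest result it does prove is Theorem \ref{th2} in Section \ref{s3}, whose hypothesis ($|\mv|>0$ in the open annulus) is implied by \eqref{1-3}. Your set-up --- single-valued stream function with distinct constants $\alpha,\beta$ on $C_a,C_b$, level sets foliating the annulus by non-contractible Jordan curves, and $-\Delta u=f(u)$ with $f$ continuous --- matches Proposition \ref{pro1-2} and is sound (though $\alpha\neq\beta$ follows from the absence of \emph{interior} critical points, not merely from $\mv\not\equiv 0$). The two decisive steps, however, both have genuine gaps. The claim that a ``Hopf-lemma / unique-continuation argument applied to $u-\alpha\geq 0$ near $C_a$ forces $|\nabla u|>0$ on $C_a$'' does not work as stated: $C_a$ is the boundary component where $u$ attains its minimum, so Hopf's lemma requires $\Delta u\le 0$ near $C_a$, i.e.\ a sign condition $f(\alpha)\ge 0$ (plus Lipschitz regularity of $f$ at $\alpha$), and neither is available. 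The paper's own Example \ref{ex1}, $\mv=(|x|-a)\mathbf{e}_\theta$ with $u=(|x|-a)^2/2$ and $f(0)=-1<0$ (Example \ref{ex2-2}), has no interior stagnation point yet $\nabla u\equiv 0$ on all of $C_a$; so any unconditional boundary-point lemma is false, and your sketch never explains how hypothesis \eqref{1-3} --- the existence of at least one non-stagnation point on $C_a$ --- actually enters. This is exactly the step where \cite{HN} has to work hard (regularity of $f$ up to the endpoint and analysis of the orthogonal gradient trajectories), and it is missing here.

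The final radial-symmetry step is also not yet an argument: you correctly note that moving planes fails on an annulus, but then assert that ``integral constraints'' from $\omega=f(u)$ and Bernoulli's law ``force every level curve to be a concentric circle'' without specifying the constraints or why they yield circularity. Moreover, your diagnosis that pure PDE techniques for $-\Delta u=f(u)$ cannot reach the conclusion is contradicted by the paper itself: the proof of Theorem \ref{th2} extends $u$ by the constant $1$ across $\overline{B_a}$ and invokes Brock's continuous Steiner symmetrization result (Proposition \ref{pro2-2}), a purely elliptic ``local symmetry'' theorem; since $u$ has no critical point in $\Omega_{a,b}$ and the annulus is connected, the resulting decomposition into radial annuli must consist of a single annulus, which can only be $\Omega_{a,b}$ centred at the origin. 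Adopting that route would close your second gap, and it also disposes of the boundary issue cheaply: once $\mv=V(|x|)\mathbf{e}_\theta$, the stagnation set in $\overline{\Omega_{a,b}}$ is a union of the full circles $C_a$, $C_b$, and the only such union that is a proper subset of $C_a$ or of $C_b$ is the empty set, whence $|\mv|>0$ on $\overline{\Omega_{a,b}}$ and $V$ has constant strict sign.
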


Note that if $\mv$ has no stagnation point in $\overline{\Omega_{a, b}}$, then the condition \eqref{1-3} is automatically fulfilled. In other words, a steady flow having no stagnation point in the \emph{closed} annulus $\overline{\Omega_{a, b}}$ must be a circular flow; see Theorem 1.1 in \cite{HN}. Nevertheless, the condition \eqref{1-3} seems unnatural and should be due to technical reasons, which unfortunately excludes many interesting flows. The following two examples escape the scope of this theorem.

\begin{example}\label{ex1}
  The smooth flow given by
  \begin{equation*}
    \mv(x)=(|x|-a)\mathbf{e}_\theta(x),\ \ \ P(x)=|x|^2/2-2a|x|+a^2\ln|x|
  \end{equation*}
clearly solves \eqref{1-1} with $D=\Omega_{a, b}$ and satisfies $\mathbf{v}\cdot \mathbf{n}=0$ on $\partial D$. Notice that $|\mv|=0$ on $C_a$, so it does not satisfy the condition \eqref{1-3}.
\end{example}

\begin{example}\label{ex1-1}
    Consider the smooth flow given by
  \begin{equation*}
   \mv(x)=4(|x|-1)(|x|-2)(x_2, -x_1),\ \ \ P(x)=-\frac{|\mv|^2}{2}-F\left(|x|^2(2-|x|)^2\right),
  \end{equation*}
  where
  \begin{equation*}
     F(s) =4\int_{0}^{s}\left(4\sqrt{t}+\sqrt{1-\sqrt{t}}-3 \right)\d t.
  \end{equation*}
  We check that $(\mv, P)$ solves the Euler equations \eqref{1-1} with $D=\Omega_{1, 2}$. Note that $\mathbf{v}=0$ on $\partial D$ and hence the condition \eqref{1-3} is also violated.
\end{example}

With this in mind, Hamel and Nadirashvili proposed the following open question (see page 349 of \cite{HN}):

\begin{center}
  \emph{if $|\mv|>0$ in $\Omega_{a, b}$, then $\mv$ is a circular flow\,?}
\end{center}

Our second main result gives a positive answer to this question. Specifically, we have
\begin{theorem}\label{th2}
    Let $D=\Omega_{a, b}$. Let $\mv\in C^2(\overline{\Omega_{a, b}})$ satisfy the Euler equations \eqref{1-1} with $\mathbf{v}\cdot \mathbf{n}=0$ on $\partial D$, where $\mathbf{n}$ denotes the outward unit normal on $\partial D$. If $|\mv|>0$ in $\Omega_{a, b}$, then $\mv$ is a circular flow.
\end{theorem}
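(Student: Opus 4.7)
The plan is to reduce the problem to a semi-linear elliptic equation with constant Dirichlet data on each boundary component and then exploit the no-stagnation-point condition together with the rotational symmetry of $\Omega_{a,b}$ to force the associated stream function to be radial.

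First, since $\mv$ is divergence-free and tangent to $\partial D$, the zero-flux condition across each of $C_a$ and $C_b$ is automatically satisfied, so there is a stream function $\psi\in C^3(\overline{\Omega_{a,b}})$ with $\mv=\nabla^{\perp}\psi=(\partial_2\psi,-\partial_1\psi)$ and constant values $\psi\equiv\psi_a$ on $C_a$, $\psi\equiv\psi_b$ on $C_b$. The hypothesis $|\mv|>0$ in $\Omega_{a,b}$ reads $\nabla\psi\neq 0$ in $\Omega_{a,b}$; if one had $\psi_a=\psi_b$ the strong maximum principle would place an interior critical point of $\psi$, a contradiction, so (up to swapping circles) $\psi_a<\psi_b$ and $\psi_a<\psi<\psi_b$ in $\Omega_{a,b}$. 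The standard identity $\mv\cdot\nabla\omega=0$ with $\omega=-\Delta\psi$, combined with the fact that the level sets of $\psi$ foliate $\Omega_{a,b}$ without critical points, yields a $C^1$ profile $f:[\psi_a,\psi_b]\to\mathbb{R}$ such that $-\Delta\psi=f(\psi)$ in $\Omega_{a,b}$.

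Next I would pin down the geometry of the streamlines. Since $\psi$ has no interior critical points, the implicit function theorem makes each level set $\{\psi=c\}\subset\Omega_{a,b}$ (for $c\in(\psi_a,\psi_b)$) a smooth compact embedded $1$-manifold. Integrating the gradient flow $\dot\gamma=\nabla\psi/|\nabla\psi|^2$ from $C_a$ to $C_b$ (with care at the exceptional boundary set $\{\mv=0\}\subset\partial D$, where $\nabla\psi$ can vanish) produces a diffeomorphism from a collar of $C_a$ onto $\overline{\Omega_{a,b}}$; each level set is then a single simple closed curve homotopic in the annulus to $C_a$, so $\overline{\Omega_{a,b}}$ is foliated by nested closed streamlines separating $C_a$ from $C_b$. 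This rigid foliation is the geometric input that singles out the radial solution of $-\Delta\psi=f(\psi)$, distinguishing the present problem from general semi-linear Dirichlet problems on annuli, which admit non-radial solutions.

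To prove radial symmetry, for every line $\ell$ through the origin I would introduce the reflection $\psi^{*}$ of $\psi$ across $\ell$ and show $\psi\equiv\psi^{*}$. Both functions solve $-\Delta u=f(u)$ in $\Omega_{a,b}$ with identical constant Dirichlet data on $C_a$ and $C_b$, so $w=\psi-\psi^{*}$ satisfies $-\Delta w=c(x)w$ with $c\in L^{\infty}$ and $w=0$ on $\partial\Omega_{a,b}$. The aim is to invoke a ``local'' symmetry principle for non-negative solutions of semi-linear elliptic problems, in the spirit of Hamel--Nadirashvili, working separately in each of the two half-annuli cut out by $\ell$: any nontrivial $w$ would deform the foliation of nested closed streamlines of $\psi$ away from that of $\psi^{*}$, and because the two foliations share the same ``boundary labels'' $\psi_a,\psi_b$ and the same profile $f$, this deformation would produce an interior critical point of $\psi$ or of $\psi^{*}$, contradicting $|\mv|>0$ in $\Omega_{a,b}$. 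Once $\psi=\psi^{*}$ for every such $\ell$, $\psi(x)=\Psi(|x|)$ for some $C^{2}$ function $\Psi$, and hence $\mv(x)=-\Psi'(|x|)\mathbf{e}_\theta(x)$ is circular.

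The main obstacle is precisely this last symmetry step. The classical moving plane method of Gidas--Ni--Nirenberg is unavailable because annuli are not convex and a plane cannot be slid in from the exterior; worse, $\mv$ (hence $\nabla\psi$) may vanish at some points of $\partial D$, as Examples \ref{ex1} and \ref{ex1-1} show, so Hopf's lemma fails at such boundary stagnation points and one cannot simply compare $\psi$ with $\psi^{*}$ across $\ell$ by standard boundary-point arguments. The heart of the proof is thus to carry the rigidity of the interior streamline foliation across the exceptional boundary set $\{\mv=0\}\cap\partial D$ and turn it into the pointwise identity $\psi\equiv\psi^{*}$, by localizing the analysis to the open portion of $\partial D$ where $\mv\neq 0$ and treating the exceptional closed set by the foliation-based rigidity of Step~3.
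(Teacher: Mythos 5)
There is a genuine gap, and it sits exactly where you place ``the heart of the proof'': the symmetry step is asserted, not proved. Your plan is to fix a line $\ell$ through the origin, set $w=\psi-\psi^{*}$, and argue that a nontrivial $w$ ``would deform the foliation of nested closed streamlines \dots and produce an interior critical point.'' No mechanism is given for this implication, and none is available at this level of generality: the statement ``a solution of $-\Delta\psi=f(\psi)$ on $\Omega_{a,b}$ with constant boundary data and no interior critical point is radial'' \emph{is} the theorem you are trying to prove, so deducing $\psi\equiv\psi^{*}$ from ``otherwise the foliation would be deformed'' is circular. Moreover, two technical claims feeding into this step are unjustified. First, the profile $f$ is in general only in $C(\overline{I})\cap C^{1}(I)$, not $C^{1}$ up to the endpoints (see Proposition \ref{pro1-2} and Example \ref{ex2-2}, where $f(s)=-2+a/(a+\sqrt{2s})$ fails to be Lipschitz at $s=0$); hence the quotient $c(x)=\bigl(f(\psi(x))-f(\psi^{*}(x))\bigr)/\bigl(\psi(x)-\psi^{*}(x)\bigr)$ need not be in $L^{\infty}$ near $\partial D$, and the linearized equation $-\Delta w=c(x)w$ cannot be handled by standard maximum principles there. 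Second, as the paper emphasizes, moving-plane/reflection arguments are known to fail for merely continuous nonlinearities, so ``localizing to the portion of $\partial D$ where $\mv\neq0$'' does not rescue the comparison of $\psi$ with $\psi^{*}$.

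The paper takes a different and genuinely workable route that you should compare with. After normalizing $u=1$ on $C_a$ and $u=0$ on $C_b$, it extends $u$ by the constant $1$ into the hole $\{|x|\le a\}$, obtaining $\bar u\in H^{1}_{0}(B_b)\cap C(\overline{B_b})$ which weakly solves $-\Delta w=f(w)$ in $\Omega_{a,b}$ with $w\equiv1$ on $Q_a$. Brock's continuous Steiner symmetrization result (Proposition \ref{pro2-2}) then yields that $\bar u$ is \emph{locally} symmetric: radially symmetric and decreasing on a countable union of pairwise disjoint annuli and flat elsewhere. The hypothesis $|\mv|>0$ in $\Omega_{a,b}$ is used only at the very end, to rule out flat regions and more than one annulus in this decomposition, forcing $u$ to be globally radial. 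This rearrangement argument is precisely what replaces the reflection step you could not complete; without it (or an equivalent substitute) your proposal does not close.
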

Theorem \ref{th2} shows that \emph{any} smooth steady solution of the Euler equations in an annulus which never vanishes \emph{inside} the domain must be a circular flow, isolating such configurations from non-circular steady states.

\subsection{Rigid effects induced by the boundary conditions}
The second part of this paper focuses on rigid effects induced by the boundary conditions. Let us assume that $D \subset \mathbb{R}^2$ is a bounded domain with a smooth boundary. The most common boundary condition is the \emph{tangential} boundary condition, namely, $\mathbf{v}$ is (at least) continuous up to the boundary and tangential there:
\begin{equation}\label{1-4}
  \mathbf{v}\cdot\mathbf{n}=0\ \ \ \text{on}\ \, \partial D,
\end{equation}
where $\mathbf{n}$ is the outward unit normal on $\partial D$. All of the aforementioned results work with this boundary condition. We are interested in another important boundary condition, the so-called \emph{no-slip} boundary condition, that is,
\begin{equation}\label{1-5}
  \mathbf{v}=0\ \ \ \text{on}\ \, \partial D.
\end{equation}
Note that both the steady flows given in Examples \ref{ex0} and \ref{ex1-1} satisfy the no-slip boundary condition \eqref{1-5}. From a physical point of view, the no-slip boundary condition requires that the particles of the fluid “adhere” to the boundary. Clearly, it is much stronger than the tangential boundary condition \eqref{1-4}. This adherence condition is reasonable for the Navier-Stokes equations describing the motion of viscous fluids (see \cite{Gal, Lions}). However, it seems inappropriate for the Euler equations, because in general, the no-slip condition over-determines the fluid flow problem so that no solution to the problem exists. We refer the reader to \cite{Day} and the references therein for related comments and discussions. It is noteworthy that the existence of a solution to Leray's problem for the stationary Navier-Stokes equations can be linked to the nonexistence of nontrivial steady Euler flows under the no-slip boundary condition and additional conditions; see \cite{Kor}.

The second part of this paper aims to establish some rigidity results concerning steady Euler flows under the no-slip boundary condition. Roughly speaking, we will show that the no-slip boundary condition leads to absolute rigidity, namely non-trivial steady solution merely exist in some sufficiently nice domains.

Our third main result deals with the case when the fluid occupies a simply connected domain.
\begin{theorem}\label{th3}
  Let $D$ be a $C^2$ non-empty simply connected bounded domain of $\mathbb{R}^2$. Let $\mv\in C^2(\overline{D})$ satisfy the Euler equations \eqref{1-1} and assume that $\mv=0$ on $\partial D$. Assume moreover that $\mv$ has a unique stagnation point in $D$. Then, up to translation,
  \begin{equation*}
    D=B_R
  \end{equation*}
  for some $R>0$. Furthermore, the unique stagnation point of $\mv$ is the center of the disk and $\mv$ is a circular flow.
\end{theorem}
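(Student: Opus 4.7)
The plan is to extract a stream function $\psi$ for $\mv$, convert the problem into an overdetermined semilinear elliptic boundary value problem, and conclude via a Serrin-type symmetry argument. Since $D$ is simply connected and $\mv$ is divergence-free, there exists $\psi\in C^3(\overline D)$ with $\mv=\nabla^\perp\psi=(-\partial_2\psi,\partial_1\psi)$. The no-slip condition $\mv=0$ on $\partial D$ forces $\psi$ to be constant and $\nabla\psi$ to vanish on $\partial D$; I normalize $\psi=0$ there. The unique stagnation point $z\in D$ is then the only critical point of $\psi$ in $D$. Since $\psi$ cannot attain both a positive maximum and a negative minimum in $D$ (this would produce two critical points), after possibly replacing $\psi$ by $-\psi$ I may assume $\psi>0$ in $D$ with maximum at $z$.

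Next I would show that $\omega=-\Delta\psi$ is a function of $\psi$ alone. The Euler equations give $\mv\cdot\nabla\omega=0$, so $\omega$ is constant on each streamline, i.e.\ on each component of each level set of $\psi$. Because $z$ is the unique interior critical point, with $\psi=0$ on $\partial D$ and $\psi(z)=\max_{\overline D}\psi$, a standard level-set/Morse argument (no intermediate critical values) shows that every level set $\{\psi=c\}$ with $c\in(0,\psi(z))$ is a single simple closed curve encircling $z$. Consequently there is a function $f:[0,\psi(z)]\to\R$, of class $C^1$ on $(0,\psi(z))$ and continuous up to the endpoints, such that
\[
-\Delta\psi=f(\psi)\ \text{in}\ D,\qquad \psi=0\ \text{and}\ |\nabla\psi|=0\ \text{on}\ \partial D,\qquad \psi>0\ \text{in}\ D.
\]

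I would then run a Serrin-type moving planes argument on this overdetermined problem. For each direction $e\in\R^2$ with $|e|=1$, I slide hyperplanes $T_\lambda=\{x\cdot e=\lambda\}$ from infinity toward $D$ and compare $\psi$ on the smaller cap with its reflection $\psi^R$ through $T_\lambda$. Because $\psi$ and $\nabla\psi$ both vanish on $\partial D$, the Cauchy data of $\psi$ and $\psi^R$ match on the relevant boundary piece, and the difference $\psi-\psi^R$ satisfies a linear elliptic inequality in the cap to which the strong maximum principle and a Hopf / Serrin corner boundary-point lemma (in the sense of the local-symmetry results for nonnegative semilinear solutions announced in the abstract) apply. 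At the critical moving-plane position these comparison principles force $D$ to be symmetric across $T_{\lambda_0}$; letting $e$ vary, $D$ is a disk centered at some $x_0$ and $\psi$ is radial about $x_0$. Hence $\mv=\nabla^\perp\psi$ is tangent to concentric circles about $x_0$ and vanishes only there, so $z=x_0$ and $\mv$ is circular, as required.

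The main obstacle I anticipate is executing the moving planes method with the degenerate Neumann datum $|\nabla\psi|\equiv 0$ on $\partial D$, rather than the strictly positive constant assumed in Serrin's original theorem. One must therefore replace Serrin's second-order corner lemma with a higher-order boundary-point argument, or with the local-symmetry machinery for semilinear overdetermined problems indicated in the abstract, and simultaneously cope with the fact that $f$ is known to be only continuous (not Lipschitz) at the endpoint values $0$ and $\psi(z)$, so that the coefficients of the linearized comparison equation may degenerate near $\partial D$ and near $z$. Controlling this degeneration while still extracting strict comparison is, in my view, the main technical point.
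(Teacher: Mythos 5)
Your reduction to the overdetermined problem
\begin{equation*}
-\Delta\psi=f(\psi)\ \text{in}\ D,\qquad \psi>0\ \text{in}\ D,\qquad \psi=0,\ |\nabla\psi|=0\ \text{on}\ \partial D,
\end{equation*}
with $f$ continuous on $[0,\psi(z)]$ and $C^1$ only in the interior, is correct and matches the paper's Proposition 2.1. The gap is in the symmetry step. You propose to conclude by a Serrin-type moving planes argument, and you yourself flag the two obstructions --- the degenerate Neumann datum $|\nabla\psi|\equiv 0$ and the mere continuity of $f$ at the endpoints --- but you do not resolve them, and they are fatal to that route. The zeroth-order coefficient $c(x)=\bigl(f(\psi^R)-f(\psi)\bigr)/\bigl(\psi^R-\psi\bigr)$ of the linearized comparison equation need not be bounded when $f$ is not Lipschitz, so the small-measure maximum principle, the Hopf lemma, and Serrin's corner lemma are all unavailable; the paper explicitly cites a counterexample (Brock) showing that moving planes fails in general for non-Lipschitz nonlinearities, and the Serrin-type result it does import (Ruiz, quoted as Proposition 5.1) requires $f\neq 0$ at the endpoint whenever the corresponding gradient constant vanishes --- a hypothesis you do not have here. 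So ``replace Serrin's corner lemma with a higher-order boundary-point argument'' is not a repair that is known to work; it is precisely the missing idea.

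The paper's actual proof avoids moving planes entirely. The no-slip condition is used not as overdetermined Cauchy data for a reflection argument but as a license to extend $\psi$ by zero to a large ball $B_R\supset\overline D$ as a $C^1$ function; the extension is a weak solution of $-\Delta u=g(u)$ in $B_R$ with $g(t)=f(t)-f(0)\chi_{\{t=0\}}$, which is a sum of a continuous and a monotone function. Brock's continuous Steiner symmetrization theorem (Proposition 2.4 in the paper) then yields that $u$ is \emph{locally} symmetric, i.e.\ radial and decreasing on a countable disjoint union of annuli and flat elsewhere; since $\nabla u\neq 0$ in $D\setminus\{z\}$ and $D$ is connected, that union has a single term, forcing $D$ to be a disk and $u$ to be radial. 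If you want to salvage your write-up, the fix is to replace the entire moving-planes paragraph with this extension-plus-local-symmetry argument; the rest of your setup can stand.
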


\begin{example}\label{ex2}
  Consider the smooth flow given in Example \ref{ex0}, namely,
  \begin{equation*}
   \mv(x)=2\sin(|x|^2)(x_2, -x_1),\ \ \ P(x)=-2\sin^2(|x|^2)|x|^2-F(\cos(|x|^2)),
  \end{equation*}
  where
  \begin{equation*}
     F(s) =4\int_{0}^{s}\left(t\arccos t+\sqrt{1-t^2}\right)\d t.
  \end{equation*}
  Clearly, $(\mv, P)$ solves the Euler equations \eqref{1-1} with the no-slip boundary condition \eqref{1-5} in $B_{\sqrt{\pi}}$.
\end{example}
Theorem \ref{th3} and Example \ref{ex2} combine to tell us that, if $D$ is a smooth simply connected bounded domain, then there is no steady flow in $D$ with a unique interior stagnation point and no-slip boundary conditions, unless $D$ is a disk.

Our next result involves the case of fluid occupying a doubly connected domain.
\begin{theorem}\label{th4}
  Let $\Omega_1$ and $\Omega_2$ be two $C^2$ non-empty simply connected bounded domains of $\mathbb{R}^2$ such that $\overline{\Omega_1}\subset \Omega_2$, and denote
  \begin{equation*}
    D=\Omega_2\backslash \overline{\Omega_1}.
  \end{equation*}
  Let $\mv\in C^2(\overline{D})$ satisfy the Euler equations \eqref{1-1} with $\mv=0$ on $\partial D$. Assume that $|v|>0$ in $D$. Then $\Omega_1$ and $\Omega_2$ are two concentric disks and, up to translation,
  \begin{equation*}
    D=  \Omega_{a,b}
  \end{equation*}
  for some $0<a<b<\infty$ and $\mv$ is a circular flow.
\end{theorem}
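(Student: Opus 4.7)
The strategy is to pass to the stream function $\psi$, convert the problem into an overdetermined semi-linear elliptic boundary value problem on $D$, run a Serrin-type moving-planes argument to identify $D$ as an annulus, and then invoke Theorem \ref{th2}.

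\emph{Stream function reformulation.} Since $D$ is doubly connected and $\mv\cdot\mathbf{n}=0$ on $\partial D$ (a consequence of $\mv=0$ on $\partial D$), there exists a stream function $\psi\in C^3(\overline{D})$ with $\mv=\nabla^\perp\psi:=(\partial_2\psi,-\partial_1\psi)$. The no-slip condition $\mv\equiv 0$ on $\partial D$ forces $\nabla\psi\equiv 0$ on $\partial D$; in particular $\psi$ is constant on each boundary component, say $\psi\equiv c_i$ on $\partial\Omega_i$. Because $|\mv|>0$ in $D$, $\psi$ has no interior critical points, so $c_1\ne c_2$; after subtracting a constant we may assume $c_2=0<c_1=:M$, and the strong maximum principle gives $0<\psi<M$ in $D$. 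Taking the curl of \eqref{1-1} yields $\mv\cdot\nabla\omega=0$, so the vorticity $\omega=-\Delta\psi$ is constant along streamlines. Since the streamlines in $D$ are precisely the (regular) level sets of $\psi$, there exists $F\in C^1([0,M])$ with $\omega=F(\psi)$, and $\psi$ solves the overdetermined problem
\begin{equation*}
-\Delta\psi=F(\psi)\ \text{in}\ D,\quad \psi=0\ \text{on}\ \partial\Omega_2,\quad \psi=M\ \text{on}\ \partial\Omega_1,\quad \nabla\psi=0\ \text{on}\ \partial D.
\end{equation*}

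\emph{Symmetry via moving planes and conclusion.} The core step is to show that this overdetermined BVP forces $D$ to be an annulus. I will use the method of moving planes in each direction $\mathbf{e}\in\mathbb{S}^1$, adapted to the doubly connected geometry and the degenerate Neumann datum $\nabla\psi\equiv 0$, relying on the local symmetry results for non-negative semi-linear solutions developed earlier in the paper (these are the same tools used to prove Theorem \ref{th3}). Sliding half-planes with inner normal $\mathbf{e}$ through $D$ and comparing $\psi$ with its reflection in the corresponding cap, the vanishing of $\nabla\psi$ on both boundary components provides first contact at the critical hyperplane, and the maximum principle upgrades this to reflectional symmetry of both $\psi$ and $D$ about that hyperplane. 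Letting $\mathbf{e}$ vary produces a common center $x_0$ about which $\psi$ is radial; consequently $\partial\Omega_1$ and $\partial\Omega_2$ are concentric circles centered at $x_0$. Up to translation, $D=\Omega_{a,b}$ for some $0<a<b$, and Theorem \ref{th2} then immediately gives that $\mv$ is a circular flow.

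\emph{Main obstacle.} The principal difficulty is running the moving-planes argument when the Neumann data vanish identically and the domain is not simply connected. The classical Serrin boundary-point lemma requires a strictly nonzero $\partial_n\psi$ at the critical hyperplane, which fails here, so a refined ``local symmetry'' boundary lemma must be used to rule out the case where pieces of $\partial D$ lie on, or are tangent to, the symmetry hyperplane. In addition, the reflected caps can interact nontrivially with the inner hole $\Omega_1$ in configurations absent from the simply connected setting of Theorem \ref{th3}, so both boundary components must be tracked simultaneously throughout the sliding procedure. Overcoming these two intertwined issues is exactly the role of the semi-linear local-symmetry machinery developed in the body of the paper.
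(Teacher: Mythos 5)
Your reduction to the overdetermined problem for the stream function is the same as the paper's first step, but two details there are already off: since $\mv\equiv 0$ on \emph{all} of $\partial D$, Proposition \ref{pro1} only yields $F\in C(\overline{I})\cap C^1(I)$, not $F\in C^1([0,M])$ as you claim (the $C^1$-up-to-the-boundary statement requires the stagnation set on $\partial D$ to be a proper subset of $\partial D$), and Example \ref{ex2-2} shows the nonlinearity can genuinely fail to be Lipschitz at the endpoint values attained on $\partial D$. This is not a cosmetic point: it breaks your core step. The paper explicitly argues (citing \cite{Bro1} for a counterexample) that the moving plane method fails in general for merely continuous nonlinearities, because the coefficient $c_\lambda=\bigl(F(\psi(x^\lambda))-F(\psi(x))\bigr)/\bigl(\psi(x^\lambda)-\psi(x)\bigr)$ in the linearized comparison equation need not be bounded near the boundary where $\psi\to 0$ or $\psi\to M$. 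On top of that, the overdetermined datum $\nabla\psi\equiv 0$ on $\partial D$ annihilates the Hopf/Serrin boundary-point lemma at the critical hyperplane; you name this obstacle yourself but the ``refined local symmetry boundary lemma'' you invoke to overcome it is never supplied, and no such moving-planes lemma appears in the paper. So the symmetry step, which is the entire content of the theorem, is not actually proved.

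The paper's route avoids moving planes altogether. Precisely because $\nabla\psi=0$ on $\partial D$, one can extend $\psi$ by the constant $1$ inside $\Omega_1$ and by $0$ outside $\Omega_2$ to obtain a $C^1(\overline{B_R})\cap H^1_0(B_R)$ weak solution of $-\Delta u=g(u)$ on a large ball $B_R\supset\overline{D}$, where $g(t)=f(t)-f(0)\chi_1(t)-f(1)\chi_2(t)$ with $\chi_1$ the indicator of $\{t=0\}$ and $\chi_2$ that of $\{t\ge 1\}$. The point is that $g$ decomposes as a continuous function plus a non-increasing plus a non-decreasing function, which is exactly the hypothesis of Brock's continuous-Steiner-symmetrization result (Proposition \ref{pro2}). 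That proposition yields that $u$ is locally symmetric in the sense of Definition \ref{de1}; since $\nabla u\neq 0$ in $D$ and $D$ is connected, the locally symmetric decomposition consists of a single annulus, which identifies $D$ as $\Omega_{a,b}$ and $u$ as radial and decreasing in one stroke (no separate appeal to Theorem \ref{th2} is needed). If you want to salvage your write-up, replace the moving-planes paragraph with this extension-plus-Brock argument; as written, the proof has a gap at its decisive step.
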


\begin{example}\label{ex3}
    Consider the smooth flow given by
  \begin{equation*}
   \mv(x)=2\sin(|x|^2)(x_2, -x_1),\ \ \ P(x)=-2\sin^2(|x|^2)|x|^2-F(\cos(|x|^2)),
  \end{equation*}
  where
  \begin{equation*}
     F(s) =4\int_{0}^{s}\left(t(\arccos (-t)+\pi)-\sqrt{1-t^2}\right)\d t.
  \end{equation*}
  One can easily verify that $(\mv, P)$ solves the Euler equations \eqref{1-1} with the no-slip boundary condition \eqref{1-5} in $\Omega_{\sqrt{\pi}, \sqrt{2\pi}}$.
\end{example}

From Theorem \ref{th4} and Example \ref{ex3}, we see that if $D$ is a smooth doubly connected bounded domain, then steady Euler flows in $D$ with no-slip boundary conditions must vanish at some interior points, unless $D$ is an annulus.

\begin{remark}
  It should be mentioned that Hamel and Nadirashvili have considered similar issues; see Theorems 1.10 and 1.13 in \cite{HN}. In their results, the steady flow $\mv$ only needs to satisfy tangential boundary conditions, but $|\mv|$ must be \emph{positive} constant on the boundary. Recently, Ruiz \cite{Ruiz} established some similar symmetry results for compactly supported steady solutions of the Euler equations in the whole plane. In the case considered by \cite{Ruiz}, the vorticity of the fluid will automatically vanish on the free boundary, except for Theorem C. However, Theorem C in \cite{Ruiz} was obtained with auxiliary condition $\partial_{\mathbf{n}}\mv\not= 0$ on the boundary which seems unnatural. Our Theorems \ref{th3} and \ref{th4} can be taken as a complement to these works. We also note that the conclusion does not hold in general without the uniqueness or nonexistence of the interior stagnation point; see e.g. \cite{Ago, En1, En2, Kam, Ruiz1}.
\end{remark}

Two more general forms of Theorems \ref{th3} and \ref{th4} are described below. First, by combining Theorem \ref{th3} with Theorem 1.10 of \cite{HN}, we immediately get the following result.
\begin{theorem}\label{th5}
  Let $D$ be a $C^2$ non-empty simply connected bounded domain of $\mathbb{R}^2$. Let $\mv\in C^2(\overline{D})$ satisfy the Euler equations \eqref{1-1} and assume that $\mathbf{v}\cdot\mathbf{n}=0$, and that $|\mv|$ is constant on $\partial D$. Assume moreover that $\mv$ has a unique stagnation point in $D$. Then, up to translation,
  \begin{equation*}
    D=B_R
  \end{equation*}
  for some $R>0$. Furthermore, the unique stagnation point of $\mv$ is the center of the disk and $\mv$ is a circular flow.
\end{theorem}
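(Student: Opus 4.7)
The approach is straightforward: since $|\mv|$ is continuous on $\overline{D}$ and takes some constant value $c\geq 0$ on $\partial D$, the hypotheses of Theorem \ref{th5} split cleanly into two disjoint cases, each of which is covered by an already-established theorem. The plan is simply to establish this dichotomy and invoke the appropriate result in each case.

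First I would observe that $c=0$ on $\partial D$ is equivalent to $\mv=0$ on $\partial D$: indeed, if $|\mv|\equiv 0$ on $\partial D$ then $\mv$ vanishes identically there, while conversely $\mv=0$ on $\partial D$ forces $|\mv|=0$ there. In this case all the hypotheses of Theorem \ref{th3} are met — $D$ is a $C^2$ simply connected bounded domain, $\mv\in C^2(\overline{D})$ solves \eqref{1-1}, $\mv=0$ on $\partial D$, and $\mv$ has a unique stagnation point in $D$ — so Theorem \ref{th3} yields the desired conclusion: up to translation $D=B_R$, the stagnation point is the origin, and $\mv$ is circular.

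The complementary case is $c>0$, that is, $|\mv|$ is a positive constant on $\partial D$ together with the tangency condition $\mv\cdot\mathbf{n}=0$, and the uniqueness of the interior stagnation point. This is exactly the configuration addressed by Theorem 1.10 of \cite{HN}, as recalled in the remark following Theorem \ref{th4}, so invoking that result gives once more that, up to translation, $D=B_R$, the stagnation point is the center, and $\mv$ is a circular flow. Combining the two cases proves Theorem \ref{th5}.

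Since the whole argument is a case split plus citation, there is no genuine analytic obstacle in the proof of Theorem \ref{th5} itself; the content lies entirely in Theorem \ref{th3} (the no-slip case, which is new in this paper) and in Theorem 1.10 of \cite{HN} (the positive-constant-speed case). The only point that requires a word of justification is the elementary observation that a constant value of $|\mv|$ on $\partial D$ must be either zero or strictly positive, so that the two prior theorems exhaust all possibilities.
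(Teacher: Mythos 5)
Your proposal is correct and matches the paper exactly: the authors state that Theorem \ref{th5} follows immediately by combining Theorem \ref{th3} (the case $|\mv|\equiv 0$ on $\partial D$) with Theorem 1.10 of \cite{HN} (the case $|\mv|$ equal to a positive constant on $\partial D$), which is precisely your dichotomy. No further comparison is needed.
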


In view of Theorem \ref{th4} above and Theorem 1.13 of \cite{HN}, one may expect to establish a unified result. In fact, we can further prove the following general result.
\begin{theorem}\label{th6}
  Let $\Omega_1$, $\Omega_2$ and $D$ be as in Theorem \ref{th4}. Let $\mv\in C^2(\overline{D})$ satisfy the Euler equations \eqref{1-1} with $\mathbf{v}\cdot\mathbf{n}=0$ on $\partial D$, and assume that $|\mv|$ is constant on $\partial \Omega_1$ and on $\partial \Omega_2$. Assume further that $|v|>0$ in $D$. Then $\Omega_1$ and $\Omega_2$ are two concentric disks and, up to translation,
  \begin{equation*}
    D=\left\{x\in \mathbb{R}^2: a<|x|<b\right\}
  \end{equation*}
  for some $0<a<b<\infty$ and $\mv$ is a circular flow.
\end{theorem}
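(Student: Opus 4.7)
The plan is to reduce Theorem \ref{th6} to an overdetermined semilinear elliptic problem for the stream function and then split into three boundary regimes, two of which reduce to results already available. First I would introduce a stream function $\psi\in C^3(\overline{D})$ with $\mathbf{v}=\nabla^\perp\psi=(\partial_2\psi,-\partial_1\psi)$; since $\mathbf{v}$ is divergence-free, $\mathbf{v}\cdot\mathbf{n}=0$ on $\partial D$, and $D$ is doubly connected with zero flux across each component of $\partial D$, the function $\psi$ is single-valued and takes a constant value $\psi_i$ on $\partial\Omega_i$ for $i=1,2$; after adding a constant we may assume $\psi_1<\psi_2$. The assumption $|\mathbf{v}|>0$ in $D$ is equivalent to $\nabla\psi\neq 0$ in $D$, hence $\psi$ has no interior critical points and each level set $\{\psi=t\}$ with $t\in(\psi_1,\psi_2)$ is a smooth simple closed curve separating $\partial\Omega_1$ from $\partial\Omega_2$. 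The transport identity $\mathbf{v}\cdot\nabla\omega=0$, together with the coincidence of streamlines and level sets of $\psi$, yields a $C^1$ function $f:[\psi_1,\psi_2]\to\mathbb{R}$ with $\omega=f(\psi)$, producing the overdetermined problem
\[
-\Delta\psi=f(\psi)\ \text{in}\ D,\qquad \psi=\psi_i,\quad |\nabla\psi|=c_i\ \text{on}\ \partial\Omega_i,\ \ i=1,2,
\]
where $c_i\ge 0$ denotes the constant value of $|\mathbf{v}|$ on $\partial\Omega_i$.

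Next I would split into three cases according to the pair $(c_1,c_2)$. When $c_1>0$ and $c_2>0$, the hypothesis that $|\mathbf{v}|$ is a positive constant on each component of $\partial D$ is satisfied and the conclusion follows directly from Theorem 1.13 of \cite{HN}. When $c_1=c_2=0$, the tangential condition combined with the vanishing of the tangential speed gives $\mathbf{v}=0$ on the whole of $\partial D$, and Theorem \ref{th4} of this paper applies verbatim. The only genuinely new situation is the \emph{mixed} case $c_1=0<c_2$ (or symmetrically $c_2=0<c_1$).

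To treat the mixed case I would follow the strategy developed in the proofs of Theorems \ref{th3}--\ref{th4}: exploit the streamline foliation of $D$ provided by $\psi$, use the constancy of $|\nabla\psi|$ on the non-degenerate component $\partial\Omega_2$ to initiate a Serrin-type moving-plane procedure there, and propagate the resulting reflectional symmetry across the entire foliation via the equation $-\Delta\psi=f(\psi)$ and the ``local'' symmetry results for semilinear elliptic problems emphasized in the abstract. Running the argument for every direction would then force $\psi$ to be radially symmetric about a common center $O$, so that every level set---in particular $\partial\Omega_1$ and $\partial\Omega_2$---is a circle about $O$ and $\mathbf{v}=\nabla^\perp\psi$ is a circular flow.

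The main obstacle is precisely this mixed case: at the stagnant component $\partial\Omega_1$, where $|\nabla\psi|=0$, the Hopf lemma that powers the classical Serrin moving-plane argument degenerates, so reflection symmetry cannot be closed at that boundary by the usual boundary-point comparison. I would circumvent this by starting the moving plane from the $c_2>0$ side, where Serrin's mechanism applies in standard form, using the absence of interior critical points of $\psi$ to prevent the reflected subsolution from touching $\psi$ at any interior point, and using the rigidity of the streamline foliation joining the two boundary components to push the critical hyperplane all the way to $\partial\Omega_1$. The degenerate boundary $\partial\Omega_1$ is then recovered a posteriori as a level set of the now-radial $\psi$, bypassing the need for a Hopf-type statement at the stagnant boundary and completing the proof.
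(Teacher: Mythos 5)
Your reduction to the overdetermined problem and your dispatch of the two extreme cases ($c_1,c_2>0$ via Theorem 1.13 of \cite{HN}, and $c_1=c_2=0$ via Theorem \ref{th4}) coincide with the paper's opening moves. The gap is in the mixed case, which is the entire new content of the theorem. First, a factual error: Proposition \ref{pro1-2} only yields $f\in C([0,1])\cap C^1((0,1))$, whereas you assert a $C^1$ function on the \emph{closed} interval; that endpoint regularity is precisely what is unavailable in general (compare Examples \ref{ex2-1} and \ref{ex2-2}, where $f$ fails to be Lipschitz at an endpoint of $I$). This matters because the moving-plane scheme you invoke requires at least a locally Lipschitz nonlinearity to run the maximum-principle comparison between $u$ and its reflection; for merely continuous $f$ the method can fail outright (the paper cites \cite{Bro1} for a counterexample). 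Your proposed fix at the stagnant component --- ``push the critical hyperplane all the way to $\partial\Omega_1$ using the rigidity of the streamline foliation'' --- is not an argument: the obstruction is not only the degeneration of the Hopf lemma on the boundary where $|\nabla u|=0$, but the breakdown of the comparison principle wherever $f$ is non-Lipschitz near the corresponding value of $u$.

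The paper's actual route through the mixed case splits further according to whether $f$ vanishes at the endpoint associated with the degenerate boundary component. If it does not vanish, Ruiz's Theorem 5.1 (Proposition \ref{pro3}) applies and a moving-plane argument does go through. If it does vanish, the paper abandons moving planes entirely: when the inner boundary is the degenerate one it extends $u$ by $1$ inside $\Omega_1$ and applies Brock's overdetermined rearrangement result (Proposition \ref{pro4}); when the outer boundary is the degenerate one it proves a new Serrin-type statement, Proposition \ref{pro5}, by continuous Steiner symmetrization, with quantitative estimates on the measure of the level strips near $\partial\Omega_1$ and the energy criterion of Lemma \ref{le1}, followed by Lemma \ref{le2} to pass from directional to radial local symmetry. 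This rearrangement step is the genuinely new ingredient behind Theorem \ref{th6} and is absent from your proposal; without it, or an equally careful substitute that handles both the non-Lipschitz nonlinearity and the vanishing gradient, the proof does not close.
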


Theorems \ref{th5} and \ref{th6} can be viewed as two classification results for the steady Euler equations in simply or doubly connected bounded domains with free boundaries.

\subsection{Sketch of the proof}
Let us now sketch the idea of the proof. The proof is based on the study of the geometric properties of the streamlines of the flow and on `local' symmetry properties for the non-negative solutions of semi-linear elliptic equations with a continuous nonlinearity. There are two major ingredients. The first is to show that the corresponding stream function solves a certain semi-linear elliptic problem, which is based on the study of the geometric properties of the streamlines of the flow. This step can be done by arguments similar to the ones used in \cite{HN}; see Proposition \ref{pro1} below. The second is to prove symmetry properties of non-negative solutions to the semi-linear elliptic equation, which leads to the desired conclusion. This step is the hard part. Relevant results in \cite{HN, Ruiz} are directly based on the well-known technique of moving planes, which goes back to  Alexandrov for the study of manifolds with constant mean curvature \cite{Ale}. The moving plane method is a very powerful technique in proving symmetry results for positive solutions of elliptic and parabolic problems in symmetric domains. We refer to the survey article \cite{Ni} and the references therein for more extensive discussions on this and related topics. However, we have to point out that the moving plane method fails in general for a non-Lipschitz continuous nonlinearity (see \cite{Bro1} for an example). More precisely, in this case, the moving plane method is often applicable only under some additional conditions (see \cite{Ruiz} for some further comments). In our situation, the nonlinearity in general is merely continuous, far from being Lipschitz continuous (see e.g. Examples \ref{ex2-1} and \ref{ex2-2} below). Therefore, it seems unlikely that the moving plane method can be used directly to obtain the desired conclusion. Nevertheless, we mention that partial symmetry can still be achieved using an incomplete version of the moving plane method (see some comments on pages 332-333 of \cite{HN}). On the other hand, Brock \cite{Bro0, Bro1} has developed a rearrangement technique called continuous Steiner symmetrization to show some `local' kind of symmetry for the non-negative solutions of semi-linear elliptic problems with continuous nonlinearity (see also Section \ref{s5} below). We are able to use the rearrangement technique to get some `local' kind of symmetry of the solutions, despite the possible lack of regularity of the nonlinearity. We would like to mention that similar continuous Steiner symmetrization techniques were recently used to prove the symmetry of equilibrium configurations for nonlinear aggregation-diffusion equations; see \cite{Car}.

Our strategy is to effectively combine the moving plane method and the continuous Steiner symmetrization techniques. Taking into account the assumptions about the flow, we can further show that `local' symmetry (achieved through continuous Steiner symmetrization techniques) and partial symmetry (achieved through the moving plane method) suffice to establish the desired symmetry of the flow under consideration. We also remark that the proof of Theorem \ref{th6} further requires elaborate techniques to establish some new Serrin-type results; see Section \ref{s5} for more details.

Finally, we also would like to mention that in recent years, there has been tremendous interest in investigating the rigidity and flexibility in steady fluid motion; see \cite{Cons, Coti, Gom, Gui, HK, HN3, HN2, HN1, HN, Li, Ruiz} and the references therein. We refer the interested reader to \cite{HK} for some relatively complete comments on recent rigidity and Liouville-type results for the Euler or Navier-Stokes equations.

The rest of the paper is organized as follows: In Section \ref{s2}, we give some preliminary results which will be used in the sequel. The proofs of Theorems \ref{th1} and \ref{th2} are presented in Section \ref{s3}. In Section \ref{s4}, we give the proofs of Theorems \ref{th3} and \ref{th4}. We devote Section \ref{s5} to proving Theorem \ref{th6}. The proof of Theorem \ref{th11} will be postponed until Section \ref{s6} because it required some results established earlier.

\section{Preliminary results}\label{s2}
In this section, we prove several preliminary results which will be frequently used in the proofs of main results. The first two assertions connect steady Euler flows with semi-linear elliptic equations of the stream function.

\begin{proposition}\label{pro1}
Let $D$ be a $C^2$ non-empty simply connected bounded domain of $\mathbb{R}^2$. Let $z\in D$ and let $\mathbf{v}\in C^2(\overline{D}\backslash \{z\})$ be a bounded flow solving the Euler equations \eqref{1-1} with $D$ replaced by $D\backslash\{z\}$ and $\mathbf{v}\cdot \mathbf{n}=0$ on $\partial D$, where $\mathbf{n}$ denotes the outward unit normal on $\partial D$. Assume that $|\mathbf{v}|>0$ in $D\backslash\{z\}$.

Then there is a $C^3(\overline{D}\backslash\{z\})\cap W_0^{1,\infty}(D)$ scalar function $u$ such that
  \begin{equation*}
  \nabla^\perp u=\mathbf{v},\ \ \ \text{that is},\ \ \ \partial_1 u=v_2\ \ \text{and}\ \ \partial_2u=-v_1
\end{equation*}
in $D\backslash \{z\}$. Moreover, there exists a function $f\in C(\overline{I}\backslash\{u(z)\})\cap C^1(I)$ with $I=\{u(x): x\in D\backslash\{z\}\}$, such that
  \begin{equation}\label{2-1}
   \Delta u+f(u)=0\ \ \ \text{in}\ \ D\backslash\{z\}.
  \end{equation}
  If, additionally, $\{x\in \partial D: |\mathbf{v}(x)|=0\}\subsetneq \partial D$, then $f\in C^1(\overline{I}\backslash\{u(z)\})$.
\end{proposition}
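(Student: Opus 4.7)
The plan is in two stages: first, construct a single-valued stream function $u$ on the punctured domain $D\setminus\{z\}$ using the tangential boundary condition and the divergence-free structure; second, exploit the transport of vorticity along streamlines, together with a careful description of the level-set geometry of $u$, to produce the nonlinearity $f$.

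For the construction of $u$, the $1$-form $v_1\,\d x_2 - v_2\,\d x_1$ is closed in $D\setminus\{z\}$ because $\nabla\cdot\mv=0$. The only obstruction to a single-valued primitive on the doubly connected set $D\setminus\{z\}$ is the circulation of $\mv$ around a small loop encircling $z$. The divergence theorem applied between such a loop and $\partial D$, combined with the tangency condition $\mv\cdot\mathbf{n}=0$ on $\partial D$, forces this circulation to vanish, so a stream function $u$ with $\nabla^\perp u=\mv$ exists on $D\setminus\{z\}$. Since $D$ is a simply connected domain with $C^2$ boundary, $\partial D$ is a single Jordan curve on which $u$ is locally constant by tangency, hence globally constant; one normalizes $u\equiv 0$ on $\partial D$. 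The assumption $\mv\in C^2(\overline{D}\setminus\{z\})$ gives $u\in C^3(\overline{D}\setminus\{z\})$, while the boundedness of $\mv$ yields $\nabla u\in L^\infty$, whence $u$ extends to a Lipschitz function on $\overline{D}$ belonging to $W_0^{1,\infty}(D)$.

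For the equation, taking the curl of the Euler equation yields $\mv\cdot\nabla\omega=0$ in $D\setminus\{z\}$, with $\omega=\Delta u$, so $\omega$ is constant on each connected component of each level set of $u$. The key geometric claim is that for every $t\in I$ the level set $\{u=t\}$ is a single smooth Jordan curve encircling $z$. Since $|\nabla u|=|\mv|>0$ on $D\setminus\{z\}$, each such level set is a smooth embedded $1$-manifold without boundary inside $D\setminus\{z\}$, and for $t\neq 0$ it cannot touch $\partial D$ either, so it is a disjoint union of closed smooth Jordan curves. If any such Jordan curve bounded (in the plane) a region $R$ not containing $z$, simple connectedness of $D$ would force $\overline{R}\subset D\setminus\{z\}$ with $u\equiv t$ on $\partial R$; the absence of critical points in $R$ then forces both extrema of $u$ on $\overline{R}$ to lie on $\partial R$, making $u\equiv t$ on $\overline{R}$, which contradicts $|\nabla u|>0$. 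A similar annular argument rules out two nested components enclosing $z$. Consequently $\omega$ depends only on $u$, and setting $f(t):=\omega|_{\{u=t\}}$ gives $\Delta u+f(u)=0$ in $D\setminus\{z\}$.

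For the regularity of $f$, choose any $C^2$ arc $\Gamma\subset D\setminus\{z\}$ transverse to the level sets of $u$; since $\nabla u\neq 0$ along $\Gamma$, the restriction $u|_\Gamma$ is a $C^2$ diffeomorphism onto its image, so $f=\omega\circ(u|_\Gamma)^{-1}$ is $C^1$ on $I$. Continuity up to the endpoint of $\overline{I}$ accessible from $\partial D$ (namely $u|_{\partial D}=0$) follows by taking $\Gamma$ that reaches $\partial D$ and using $\mv\in C^2(\overline{D}\setminus\{z\})$; the other endpoint $u(z)$ may genuinely be a singular value of $f$, hence its exclusion from the domain of continuity. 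Under the additional hypothesis that $|\mv|$ does not vanish identically on $\partial D$, one can choose $\Gamma$ meeting a point $x_0\in\partial D$ with $\nabla u(x_0)\neq 0$, so that $u|_\Gamma$ extends as a $C^2$ diffeomorphism up to $x_0$, upgrading continuity of $f$ at $0$ to $C^1$ regularity. The hard part is the geometric claim in the second stage: ruling out level components that fail to encircle $z$ relies crucially on the interplay between simple connectedness of $D$, the topology of the punctured domain, and the absence of interior critical points.
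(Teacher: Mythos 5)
Your proposal is correct and shares the paper's overall architecture (stream function from the divergence-free structure and tangency, vorticity transport $\mv\cdot\nabla\omega=0$, connectedness of the level sets of $u$ around $z$, and a transverse arc to read off the regularity of $f$), but it justifies the key geometric step differently. The paper leans on the dynamical lemmas of Hamel--Nadirashvili: the gradient trajectories $\dot\sigma_y=\nabla u(\sigma_y)$ run monotonically from $\partial D$ to $z$ and parametrize $I=(0,u(z))$, the orbits of $\mv$ are periodic Jordan curves surrounding $z$ that each cross $\sigma_y$ exactly once, and $f(\tau)=-\Delta u(\sigma_y(g^{-1}(\tau)))$ is then globally well defined and $C^1$ along a single curve. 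You instead prove directly, via the maximum principle applied to the region bounded by a level component (and to the annulus between two nested components), that each level set $\{u=t\}$, $t\in I$, is a single Jordan curve enclosing $z$, so that $\omega$ descends to a function of $u$; this is more self-contained, at the cost of re-deriving locally the monotone parametrization that the paper gets for free from the gradient flow. Three small points deserve tightening: (i) the period obstructing a single-valued primitive of $v_1\,\d x_2-v_2\,\d x_1$ is the \emph{flux} of $\mv$ through a loop around $z$, not its circulation — your divergence-theorem argument is the right one, only the word is off; (ii) before asserting that $\{u=t\}$ is compact in $D\setminus\{z\}$ you should record that $u(z)=\max u$ and $I=(0,u(z))$, which follows from the same no-interior-critical-point reasoning (an interior extremum of $u$ away from $z$ would be a critical point), so that level sets for $t\in I$ avoid a neighborhood of $z$; and (iii) when $\mv$ vanishes on all of $\partial D$ no arc reaching the boundary is transverse there, so continuity of $f$ at $0$ should instead be obtained by noting that $\Delta u$ is constant on each nearby level curve and hence, by continuity up to $\partial D$, constant on $\partial D$, which is exactly how the paper defines $f(0)$. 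None of these affects the validity of the argument.
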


\begin{proof}
This result is essentially included in \cite{HN} (see also \cite{Ruiz}), but we would like to summarize the argument here for the sake of completeness.  The existence of a stream function $u\in C^3(\overline{D}\backslash\{z\})\cap W_0^{1,\infty}(D)$ is a consequence of the fact that $\mv$ is bounded, divergence-free and satisfies the tangential boundary condition \eqref{1-4}. It remains to show \eqref{2-1}.

By assumption, the function $u$ has a unique critical point $z\in D$. We can assume, without loss of generality, that (after possibly changing $\mathbf{v}$ into $-\mathbf{v}$ and $u$ into $-u$)
\begin{equation*}
  0<u(x)<u(z)\ \ \ \text{for all}\ \ x\in D\backslash\{z\}.
\end{equation*}
For any given point $y\in D\backslash\{z\}$, let $\sigma_y$ be the solution of
	\begin{align}\label{2-3}
		\begin{cases}
			 \dot{\sigma}_y(t) = \nabla u(\sigma_y(t)), &\\
             \sigma_y(0)=y.\ \ &
\end{cases}
	\end{align}
Then by Lemma 2.2 in \cite{HN}, there are quantities $t_y^{\pm}$ such that $-\infty\le t^-_y<0<t^+_y\le +\infty$ and the solution $\sigma_y$ of \eqref{2-3} is of class $C^1((t_y^-, t^+_y))$ and ranges in $D\backslash \{z\}$, with
	\begin{align}\label{2-4}
		\begin{cases}
		\text{dist}\left(\sigma_y(t), \partial D \right)\to 0\ \,\text{and}\ \,u(\sigma_y(t))\to 0\ \,\text{as}\ \,t\to t_y^-,\ \ &\\
|\sigma_y(t)-z|\to 0 \ \,\text{and}\ \,u(\sigma_y(t))\to u(z)\ \,\text{as}\ \, t\to t_y^+. &
\end{cases}
	\end{align}
Note that the function $g:=u\circ \sigma_y\in C^1((t_y^-, t_y^+))$ is increasing since $(u\circ \sigma_y)'(t)=|\nabla u(\sigma_y(t))|^2=|\mathbf{v}(\sigma_y(t))|^2>0$ for all $t\in (t_y^-, t_y^+)$. So $g$ is a strictly increasing diffeomorphism from $(t_y^-, t_y^+)$ onto $(0, u(z))$. Consider the function $f: (0, u(z))\to \mathbb{R}$ defined by
\begin{equation}\label{2-5}
  f(\tau)=-\Delta u(\sigma_y(g^{-1}(\tau)))\ \ \ \text{for}\ \ \tau\in (0, u(z)).
\end{equation}
Then $f$ is of class $C^1((0, u(z)))$ by the chain rule. The equation $\Delta u+f(u)=0$ is now satisfied along the curve $\sigma_y((t_y^-, t_y^+))$. Let us check it in the whole set $D\backslash\{z\}$. Consider any point $x\in D\backslash\{z\}$. Let $\xi_x$ be the solution of
	\begin{align*}
		\begin{cases}
			 \dot{\xi}_x(t) = \mathbf{v}(\xi_x(t)), &\\
             \xi_x(0)=x.\ \ &
\end{cases}
	\end{align*}
By Lemma 2.6 in \cite{HN}, we see that $\xi_x$ is defined in $\mathbb{R}$ and periodic. Furthermore, the streamline $\Im_x:=\xi_x(\mathbb{R})$ is a $C^2$ Jordan curve surrounding $z$ in $D$ and meets the curve $\sigma_y((t_y^-, t_y^+))$ once. Hence, there is $s\in (t_y^-, t_y^+)$ such that $\sigma_y(s)\in \Im_x$. On the one hand, the stream function $u$ is constant along the streamline $\Im_x$. On the other hand, it follows from the Euler equations \eqref{1-1} that $\mv \cdot \nabla(\Delta u)=0$ in $D\backslash\{z\}$, and hence the vorticity $\Delta u$ is constant along the streamline $\Im_x$ too. Therefore, \eqref{2-5} implies
\begin{equation*}
  \Delta u(x)+f(u(x))=\Delta u(\sigma_y(s))+f(u(\sigma_y(s)))=\Delta u(\sigma_y(s))+f(g(s))=0.
\end{equation*}
Hence, $\Delta u+f(u)=0$ in $D\backslash\{z\}$. By the continuity of $u$ in $\overline{D}$, we have
\begin{equation*}
 \max_{t\in \mathbb{R}}{\text{dist}(\xi_x(t),\partial D)} \to 0  \ \text{as}\ \text{dist}(x, \partial D)\to 0\ \ \ \text{and}\ \ \   \max_{t\in \mathbb{R}}|\xi_x(t)-z|\to 0\ \text{as}\ |x-z|\to 0.
\end{equation*}
Since $\Delta u$ is continuous in $\overline{D}\backslash\{z\}$ and constant along any streamline of the flow, we see that $\Delta u$ is constant on $\partial D$. Call $d$ the value of $\Delta u$ on $\partial D$. Set $f(0)=-d$. Then we infer from \eqref{2-4} and \eqref{2-5} that $f:[0, u(z))\to \mathbb{R}$ is continuous in $[0, u(z))$ and that the equation $\Delta u+f(u)=0$ actually holds in $\overline{D}\backslash\{z\}$. Finally, if $\{x\in \partial D: |\mathbf{v}(x)|=0\}\subsetneq \partial D$, then by Lemma 2.8 in \cite{HN}, we see that $f\in C^1(\overline{I}\backslash\{u(z)\})$. The proof is thus complete.
\end{proof}

\begin{example}\label{ex2-1}
  Let $\mv$ be the steady flow in Example \ref{ex0-1}, i.e., $\mv(x)=\mathbf{e}_\theta(x)$. Then
  \begin{equation*}
\begin{split}
   u(x) & =|x|-1, \\
    I & = \left\{u(x)\mid x\in B_1\backslash\{0\}\right\}=(-1, 0).
\end{split}
  \end{equation*}
  Let $f(s)  =-1/(s+1),\ s\in (-1, 0]$. One can easily verify that the equation $\Delta u+f(u)=0$ holds in $\overline{B_1}\backslash\{0\}$.
\end{example}

Similar results hold for the case of doubly connected domains; see \cite{HN,Ruiz}.

\begin{proposition}\label{pro1-2}
 Let $\Omega_1$ and $\Omega_2$ be two $C^2$ non-empty simply connected bounded domains of $\mathbb{R}^2$ such that $\overline{\Omega_1}\subset \Omega_2$, and denote
  \begin{equation*}
    D=\Omega_2\backslash \overline{\Omega_1}.
  \end{equation*}
   Let $\mv\in C^2(\overline{D})$ satisfy the Euler equations \eqref{1-1} with $\mathbf{v}\cdot\mathbf{n}=0$ on $\partial D$. Assume that $|v|>0$ in $D$. Then there is a $C^3(\overline{D})$ scalar function $u$ such that
  \begin{equation*}
  \nabla^\perp u=\mathbf{v},\ \ \ \text{that is},\ \ \ \partial_1 u=v_2\ \ \text{and}\ \ \partial_2u=-v_1,
\end{equation*}
and $u$ is constant on each connected component of $\partial D$. Moreover, there exists a function $f\in C(\overline{I})\cap C^1(I)$ with $I=\{u(x): x\in D\}$, such that
  \begin{equation}\label{2-1-1}
   \Delta u+f(u)=0\ \ \ \text{in}\ \ D.
  \end{equation}
\end{proposition}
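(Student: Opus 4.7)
The plan is to adapt the argument of Proposition \ref{pro1} to the doubly connected setting, the principal modifications being (i) the construction of a single-valued stream function in $D$ despite its non-trivial topology, and (ii) the fact that the gradient flow of $u$ connects the two components of $\partial D$ rather than a boundary and an interior critical point.

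First I would construct the stream function. Since $\mv\in C^2(\overline{D})$ is divergence-free and tangent to $\partial D$, its flux through any loop homologous to $\partial\Omega_1$ equals $\int_{\partial\Omega_1}\mv\cdot\mathbf{n}\,ds=0$; hence the $1$-form $v_2\,dx_1-v_1\,dx_2$ is closed and has vanishing periods, so it admits a single-valued $C^3(\overline{D})$ primitive $u$ with $\nabla^{\perp}u=\mv$. Because $\mv$ is tangential on each connected component of $\partial D$, $u$ is constant on $\partial\Omega_1$ and on $\partial\Omega_2$; call these values $c_1$ and $c_2$. Since $|\nabla u|=|\mv|>0$ in $D$, $u$ has no interior critical points, so by the standard maximum/minimum argument $c_1\neq c_2$ (otherwise $u\equiv c_1$ and $\mv\equiv 0$). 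Up to changing $\mv$ into $-\mv$ and $u$ into a constant plus $-u$, I may assume
\[
c_1<u(x)<c_2\quad\text{for all }x\in D,\qquad u=c_1\ \text{on}\ \partial\Omega_1,\quad u=c_2\ \text{on}\ \partial\Omega_2.
\]

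Next I would run the gradient-flow argument exactly as in Proposition \ref{pro1}. Fix any $y\in D$ and let $\sigma_y$ solve $\dot\sigma_y(t)=\nabla u(\sigma_y(t))$, $\sigma_y(0)=y$. Because $|\nabla u|>0$ in $D$ and $u$ is constant on each boundary component, the analog of \cite[Lemma 2.2]{HN} in the doubly connected setting yields finite (or infinite) times $t_y^-<0<t_y^+$ with $\sigma_y((t_y^-,t_y^+))\subset D$ and $u(\sigma_y(t))\to c_1$ as $t\to t_y^-$, $u(\sigma_y(t))\to c_2$ as $t\to t_y^+$; moreover $g:=u\circ\sigma_y$ is a $C^1$ diffeomorphism from $(t_y^-,t_y^+)$ onto $(c_1,c_2)$, and I define
\[
f(\tau):=-\Delta u\bigl(\sigma_y(g^{-1}(\tau))\bigr),\qquad \tau\in(c_1,c_2)=I,
\]
which is of class $C^1(I)$.

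To propagate the identity $\Delta u+f(u)=0$ from the curve $\sigma_y$ to all of $D$, I invoke the analog of \cite[Lemma 2.6]{HN}: since $\mv$ is tangential on $\partial D$, never vanishes in $D$, and $D$ is doubly connected, every streamline $\Im_x$ of $\mv$ is a $C^2$ Jordan curve enclosing $\Omega_1$ in $D$, so it must cross the gradient-flow curve $\sigma_y((t_y^-,t_y^+))$ at least once. The Euler equation gives $\mv\cdot\nabla(\Delta u)=0$, so $\Delta u$ is constant along each streamline, while $u$ is already constant there. Picking any intersection point $\sigma_y(s)\in\Im_x$ and applying the definition of $f$ gives $\Delta u(x)+f(u(x))=\Delta u(\sigma_y(s))+f(g(s))=0$. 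Finally, the continuity of $\Delta u$ up to $\overline{D}$ together with the fact that $\Delta u$ is constant on $\partial\Omega_1$ and on $\partial\Omega_2$ (being constant on every streamline, and both boundary components are themselves streamlines because $\mv$ is tangential and nowhere vanishing in a neighborhood in $\overline{D}$, or by passing to the limit along converging streamlines) lets me set $f(c_1)$ and $f(c_2)$ to those boundary values of $-\Delta u$, giving $f\in C(\overline{I})$ and $\Delta u+f(u)=0$ in $\overline{D}$.

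The main obstacle I expect is verifying cleanly that $\Delta u$ admits continuous extensions to $\partial\Omega_1$ and $\partial\Omega_2$ as constants; in the simply connected case of Proposition \ref{pro1} this followed from a shrinking-streamlines argument near $\partial D$, and here the same argument must be applied separately near each component of $\partial D$ (including near the inner boundary $\partial\Omega_1$, where streamlines shrink toward $\partial\Omega_1$ rather than toward an interior point). This is where \emph{no-stagnation inside} and $\mv\in C^2(\overline{D})$ are crucially used, via the uniform convergence $\max_{t\in\mathbb{R}}\mathrm{dist}(\xi_x(t),\partial\Omega_i)\to 0$ as $\mathrm{dist}(x,\partial\Omega_i)\to 0$; granted that, the continuity and constancy of $f$ at both endpoints of $I$ follow as in the simply connected case.
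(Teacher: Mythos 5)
Your proposal is correct and follows essentially the same route as the paper, which does not prove Proposition \ref{pro1-2} explicitly but refers to \cite{HN,Ruiz} and to the argument already summarized for Proposition \ref{pro1}: single-valuedness of $u$ via the vanishing flux period, the gradient-flow trajectory $\sigma_y$ defining $f$ on $I$, constancy of $\Delta u$ along the Jordan-curve streamlines to propagate $\Delta u+f(u)=0$ to all of $D$, and continuity of $f$ up to $\overline{I}$ from the uniform approach of streamlines to each boundary component. Your handling of the two endpoints of $I$ (both boundary components rather than a boundary and an interior critical point) is exactly the needed modification.
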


\begin{example}\label{ex2-2}
  Let $\mv$ be the steady flow in Example \ref{ex1}, i.e., $ \mv(x)=(|x|-a)\mathbf{e}_\theta(x)$. Then
  \begin{equation*}
\begin{split}
   u(x) & =(|x|-a)^2/2\ \ \ \text{(up to additive constants)}, \\
    I & = \{u(x)\mid x\in {\Omega_{a, b}}\}=(0, (b-a)^2/2).
\end{split}
  \end{equation*}
  Let $f(s)  =-2+a/(a+\sqrt{2s}),\ s\in \overline{I}$. One can easily verify that the equation $\Delta u+f(u)=0$ holds in $\overline{\Omega_{a, b}}$.
\end{example}

For convenience, we denote by $Q_r(z)$ the closed disk in $\mathbb{R}^2$, with radius $r>0$ centered at $z\in \R^2$, and $Q_r:=Q_r(0)$. Inspired by Brock \cite{Bro1}, we introduce the following `local' kind of symmetry.
\begin{definition}\label{de1}

  Let $u\in C(\R^2)$ be a non-negative function with compact support. Suppose that $u$ is continuously differentiable on $V:=\{x\in \R^2: 0<u(x)<\sup_{\R^2} u\}$. We say that $u$ is \emph{locally symmetric} if
    \begin{itemize}
    \item [(1)]$\displaystyle  V=\bigcup_{k\in K} A_k \cup \{x\in V: \nabla u(x)=0\}$, \text{where}
    \begin{equation*}
      A_k=B_{R_k}(z_k)\backslash {Q_{r_k}(z_k)},\ \ \ z_k\in \R^2,\ \ \ 0\le r_k<R_k;
    \end{equation*}
     \item[(2)]$K$ is a countable set;
        \smallskip
    \item [(3)] the sets $A_k$ are pairwise disjoint;
    \smallskip
     \item [(4)]$u(x)=U_k(|x-z_k|)$, $x\in A_k$, where $U_k\in C^1([r_k, R_k])$;
         \smallskip
    \item [(5)]$U'_k(r)<0$ for $r\in (r_k, R_k)$;
        \smallskip
    \item [(6)]$u(x)\ge U_k(r_k),\ \forall\,x\in B_{r_k}(z_k)$, $k\in K$.
  \end{itemize}
\end{definition}

The following result concerns the local symmetry of non-negative solutions to semi-linear elliptic equations with a nonlinearity of low regularity, which follows from Theorems 6.1 and 7.2 in \cite{Bro1}.
\begin{proposition}[\cite{Bro1}]\label{pro2}
Let $R>0$. Let $f\in L^\infty([0, +\infty))$ be such that
\begin{equation*}
  f=f_1+f_2+f_3,
\end{equation*}
where $f_1$ is a continuous function, $f_2$ is a non-increasing function, and $f_3$ is a non-decreasing function. Let $u\in H^1_0(B_R)\cap C(\overline{B_R})$ be a weak solution of the following problem
	\begin{align*}
		\begin{cases}
			-\Delta u=f( u),\ \ u\ge 0,&\text{in}\ \ B_R,\\
             u=0,\ \ &\text{on}\  \ \partial B_R.
\end{cases}
	\end{align*}
In addition, suppose that $u\in C^1(V)$ with $V:=\{x\in B_R: 0<u(x)<\sup_{B_R} u\}$.
  The $u$ is locally symmetric (consider as functions defined on $\R^2$ with zero extension outside).
\end{proposition}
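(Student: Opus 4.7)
The plan is to invoke F. Brock's continuous Steiner symmetrization machinery from \cite{Bro1}, as the hypotheses of the proposition are tailored precisely to the setting of Theorems~6.1 and~7.2 there. The overall strategy rests on a monotonicity property of the energy functional under this rearrangement, together with an analysis of the equality case that produces the annular decomposition in Definition~\ref{de1} rather than full radial symmetry.

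I would first record that $-\Delta u = f(u)$ is the Euler--Lagrange equation of the energy
\begin{equation*}
\mathcal{E}[v]=\frac12 \int_{B_R}|\nabla v|^2\, \d x - \int_{B_R} F(v)\, \d x,\qquad F(s)=\int_0^s f(\tau)\, \d\tau,
\end{equation*}
on $H^1_0(B_R)$, so $u$ is a critical point of $\mathcal{E}$. For a chosen unit vector $\mathbf{e}\in \mathbb{S}^1$ and time parameter $t\ge 0$, let $u^*(t,\cdot)$ denote Brock's continuous Steiner symmetrization of $u$ (extended by zero to $\R^2$) in the direction $\mathbf{e}$. Its two key properties are: (i) the continuous P\'olya--Szeg\H{o} inequality $\int |\nabla u^*(t,\cdot)|^2\le \int |\nabla u|^2$ for every $t\ge 0$, and (ii) the equimeasurability identity $\int F(u^*(t,\cdot))\, \d x=\int F(u)\, \d x$, which holds regardless of the regularity of $f$. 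Combining (i) and (ii) yields $\mathcal{E}[u^*(t,\cdot)]\le \mathcal{E}[u]$ along the flow, and criticality of $u$ rules out any first-order decrease, so equality is forced in (i) for small $t>0$.

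The main obstacle, and the technical core of the argument, is the equality analysis that extracts Definition~\ref{de1} from this stationarity. In the classical Lipschitz setting, equality in P\'olya--Szeg\H{o} forces $u$ to coincide with its Steiner symmetrization, and a rotation argument then gives global radial symmetry; but when $f$ is only continuous and possibly non-monotone, this rigidity breaks down. The decomposition $f=f_1+f_2+f_3$ is designed precisely to salvage \emph{local} rigidity: the monotone parts $f_2,f_3$ can be treated by standard Hardy--Littlewood rearrangement inequalities, while the continuous part $f_1$ is handled by a careful approximation by smoother nonlinearities. Following Brock's analysis this forces the non-degenerate set $V=\{0<u<\sup u\}$ to decompose, modulo $\{\nabla u=0\}$, into countably many pairwise disjoint annuli $A_k=B_{R_k}(z_k)\setminus Q_{r_k}(z_k)$, on each of which $u$ is a strictly decreasing radial profile $U_k$; the inner-core inequality $u\ge U_k(r_k)$ on $B_{r_k}(z_k)$ in condition~(6) reflects the fact that continuous Steiner symmetrization concentrates super-level sets about each centre $z_k$. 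Assembling these pieces yields the local symmetry asserted in Definition~\ref{de1} and completes the proof.
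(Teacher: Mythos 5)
The paper offers no proof of this proposition at all --- it is quoted directly as a consequence of Theorems 6.1 and 7.2 of Brock's paper \cite{Bro1} --- and your proposal takes essentially the same route: it identifies the continuous Steiner symmetrization machinery, the energy comparison under the rearrangement flow, and the equality/stationarity analysis as the relevant ingredients, and defers the technical core (extracting the annular decomposition of Definition \ref{de1}) to Brock's analysis. The one imprecision worth noting is that stationarity does not force exact equality in the P\'olya--Szeg\H{o} inequality for small $t>0$; what one actually derives from the weak formulation and equimeasurability (and what Brock's symmetry criterion, Lemma \ref{le1} above, requires) is only that the Dirichlet-energy decrease is first-order negligible, i.e.\ $\lim_{t\to 0}t^{-1}\bigl(\int_{B_R}|\nabla u|^2\,\d x-\int_{B_R}|\nabla u^t|^2\,\d x\bigr)=0$.
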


\begin{remark}
  Note that if $u$ is locally symmetric in $B_R$, then $u$ is radially symmetric and radially decreasing in annuli $A_k\, (k\in K)$, and flat elsewhere in $B_R$.
\end{remark}

The following result follows from Theorem 6.1 and Corollary 7.6 in \cite{Bro1}.
\begin{proposition}[\cite{Bro1}]\label{pro2-2}
  Let $0<r<R<+\infty$ and $f$ be of class $ C( [0, +\infty))$. Suppose $u\in H^1_0(B_R)\cap C(\overline{B_R})$ is a weak solution of the following problem
	\begin{align*}
		\begin{cases}
			-\Delta u=f( u),\ \ 0\le u \le 1&\text{in}\ \ B_R\backslash Q_r,\\
             u\equiv 1,\ \ &\text{in}\  \  Q_r.
\end{cases}
	\end{align*}
In addition, suppose that $u\in C^1(V)$ with $V:=\{x\in B_R: 0<u(x)<1\}$. Then $u$ is locally symmetric (after extending  by zero to $\R^2$).
\end{proposition}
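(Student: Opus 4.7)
The plan is to adapt the continuous Steiner symmetrization (CSS) framework of Brock \cite{Bro1} to the present obstacle-type setting, in which the condition $u\equiv 1$ on $Q_r$ plays the role of a prescribed ``flat top'' rather than a free maximum. Proposition \ref{pro2} addresses the free-maximum case, and my goal is to reduce the obstacle case to a parallel application of Brock's rigidity theorem, modulo careful treatment of the interface $\partial Q_r$.

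First, I would extend $u$ by zero outside $B_R$, obtaining a compactly supported function $u\in H^1(\R^2)\cap C(\R^2)$ with $0\le u\le 1$, solving $-\Delta u=f(u)$ classically on the open set $V=\{0<u<1\}\subset B_R\setminus Q_r$, with $u\equiv 1$ on $Q_r$ and $u\equiv 0$ on $\R^2\setminus B_R$. The super-level set $\{u\ge 1\}=Q_r$ is already a closed disk, hence is automatically compatible with the structure required by Definition \ref{de1}; the real task is to establish local symmetry on $V$. Next, for each line $\ell$ in the plane I would introduce Brock's CSS $u^{t,\ell}$, a one-parameter family of equimeasurable rearrangements along the fibres orthogonal to $\ell$, with the properties: (i) $\int_{\R^2} G(u^{t,\ell})\,\d x=\int_{\R^2} G(u)\,\d x$ for every continuous $G$, so the pointwise bounds $0\le u^{t,\ell}\le 1$ are preserved and $\int F(u^{t,\ell})\,\d x$ is conserved, where $F(s):=\int_0^s f(\tau)\,\d\tau$; (ii) the map $t\mapsto \int_{\R^2}|\nabla u^{t,\ell}|^2\,\d x$ is non-increasing; (iii) $u^{t,\ell}$ converges in $L^1(\R^2)$ as $t\to\infty$ to the standard Steiner symmetrization of $u$ with respect to $\ell$.

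Then I would test the equation $-\Delta u=f(u)$ against the infinitesimal CSS generator in order to show $\frac{d}{dt}\mathcal{E}(u^{t,\ell})\big|_{t=0^+}=0$, where $\mathcal{E}(w)=\tfrac{1}{2}\int|\nabla w|^2\,\d x-\int F(w)\,\d x$. Since $\int F(u^{t,\ell})\,\d x$ is conserved, this forces $\frac{d}{dt}\int|\nabla u^{t,\ell}|^2\,\d x\big|_{t=0^+}=0$. Invoking the rigidity part of Brock's Theorem 6.1, together with the obstacle extension provided by Corollary 7.6 in \cite{Bro1}, then yields local Steiner symmetry of $u$ with respect to $\ell$, for every line $\ell$. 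Combining this information across all directions, and using the regularity $u\in C^1(V)$, the level sets of $u$ through any point $x_0\in V$ with $\nabla u(x_0)\ne 0$ must be concentric circles in a neighbourhood of $x_0$. A maximal-component argument would then produce the countable collection of pairwise disjoint annuli $A_k=B_{R_k}(z_k)\setminus Q_{r_k}(z_k)$ on which $u(x)=U_k(|x-z_k|)$ with $U_k'<0$, while $\nabla u\equiv 0$ on the complement of $\bigcup_k A_k$ inside $V$; condition (6) of Definition \ref{de1} follows from the continuity of $u$ and the maximality of each $A_k$.

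The hard part is the rigorous derivation of $\frac{d}{dt}\int|\nabla u^{t,\ell}|^2\,\d x\big|_{t=0^+}=0$ in the presence of the obstacle: $u$ is only $C^1$ on $V$ and may fail to be differentiable across $\partial Q_r$, and $f$ is merely continuous rather than Lipschitz. Consequently the formal integration-by-parts computation that identifies the first variation of the Dirichlet energy along CSS with the action of $-\Delta u$ on the CSS velocity field must be justified via a careful approximation of $f$ and a treatment of the flux contribution along $\partial Q_r$. This is precisely the content of Corollary 7.6 in \cite{Bro1}, which extends Brock's free-boundary framework (Theorem 6.1) to Dirichlet obstacles of the form considered here; once that technical input is secured, the assembly of the local-symmetry structure of Definition \ref{de1} proceeds exactly as in the free-maximum proof of Proposition \ref{pro2}.
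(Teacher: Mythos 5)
Your proposal is correct and follows essentially the same route as the paper, which gives no independent argument for this proposition but simply derives it from Theorem 6.1 and Corollary 7.6 of Brock's work \cite{Bro1} — exactly the two results your sketch ultimately rests on for the first-variation identity and the rigidity step. The additional detail you supply about the CSS mechanism and the assembly of the annular decomposition is consistent with Brock's framework and with how Proposition \ref{pro2} is obtained in the free-maximum case.
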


\section{Proofs of Theorems \ref{th1} and \ref{th2}}\label{s3}
In this section, we give the proofs of Theorems \ref{th1} and \ref{th2}. The main idea is to consider the stream function which satisfies a semilinear equation, and then to use `local' symmetry properties for the non-negative solutions of the problems to conclude.
\subsection{Proof of Theorem \ref{th1}}
Let $u\in C^3(\overline{D}\backslash\{z\})\cap W_0^{1,\infty}(D)$ be the corresponding stream function of the steady flow $\mv$; see Proposition \ref{pro1}. Since $z\in D$ is the only stagnation point of $\mathbf{v}$ in $D$, the stream function $u$ has a unique critical point in $D$. We may assume, without loss of generality, that $u$ has a unique maximum point in $\overline{D}$ and this point is actually the stagnation point $z$ (after possibly changing $\mathbf{v}$ into $-\mathbf{v}$ and $u$ into $-u$).
 The uniqueness of the critical point of $u$ in $D$ implies that
\begin{equation*}
  0<u(x)<u(z)\ \ \ \text{for all}\ \ x\in D\backslash\{z\}.
\end{equation*}
By Proposition \ref{pro1}, we see that there is a continuous function $f: [0, u(z)) \to \mathbb{R}$ such that
  \begin{equation*}
   \Delta u+f(u)=0\ \ \ \text{in}\ \ D\backslash\{z\}.
  \end{equation*}
Taking into account the vorticity $\omega=\Delta u \in L^1(D)$, and using a standard cut-off argument, we check that $u$ is a weak solution of the equation $\Delta u+f(u)=0$ in $D$.
It remains to show that $u$ is a radially decreasing function with respect to the origin. Indeed, from Proposition \ref{pro2}, one knows that $u$ is locally symmetric, namely, it is  radially symmetric and radially decreasing in some annuli (probably infinitely many) and flat elsewhere. Recall that $u$ has a unique critical point in $D$. We conclude that the number of annuli can only be one at most, and hence $u$ is a radially decreasing function. The radial symmetry of stream function $u$ means that the flow $\mv$ is circular. The proof is thus complete.

\subsection{Proof of Theorem \ref{th2}}
Consider the corresponding stream function $u\in C^3(\overline{D})$ of the steady flow $\mv$. Up to normalization, we may assume, without loss of generality, that
\begin{equation*}
  u=1\  \text{on}\ \, C_a\ \ \ \text{and}\ \ \ u=0\ \, \text{on}\  C_b.
\end{equation*}
Since $\mathbf{v}$ has no stagnation point in $\Omega_{a, b}$, the stream function $u$ has no critical point in $\Omega_{a, b}$. It follows that
\begin{equation*}
  0<u(x)<1\ \ \ \text{for all}\ \ x\in \Omega_{a, b}.
\end{equation*}
By Proposition \ref{pro1-2}, there is a continuous function $f: [0, 1] \to \mathbb{R}$ such that
  \begin{equation*}
   \Delta u+f(u)=0\ \ \ \text{in}\ \ \Omega_{a, b}.
  \end{equation*}
It remains to show that $u$ is a radially decreasing function with respect to the origin. Set
\begin{equation*}
  \bar{u}(x)=\begin{cases}
               1, & \mbox{if }\ |x|\le a, \\
               u(x), & \mbox{if}\ a<|x|<b.
             \end{cases}
\end{equation*}
Then $\bar{u}\in  H^1_0(B_b)\cap C(\overline{B_b})$, where $B_b=\{x\in \mathbb{R}^2: |x|<b\}$. Moreover, $\bar{u}$ is a weak solution of the following problem
	\begin{align*}
		\begin{cases}
			-\Delta w=f(w),\ \ 0< w < 1&\text{in}\ \ \Omega_{a, b},\\
             w(x)\equiv 1,\ \ &\text{if}\ \ |x|\le a,\\
             w\in H^1_0(B_b).
\end{cases}
	\end{align*}
It follows from Proposition \ref{pro2-2} that $\bar u$ is locally symmetric, namely, it is  radially symmetric and radially decreasing in some annuli (probably infinitely many) and flat elsewhere. Recall that $u$ has no critical point in $\Omega_{a, b}$. We conclude that the number of annuli can only be one at most, and hence $u$ is a radially decreasing function. The proof is thus complete.

\section{Proofs of Theorems \ref{th3} and \ref{th4}}\label{s4}
In this section, we give the proofs of Theorems \ref{th3} and \ref{th4}. These theorems can be proved by the method analogous to that used in the previous section. In the previous section, the domains are a priori symmetric, so the `local' symmetry properties for the non-negative solutions of semi-linear elliptic equations can be applied directly. In our following consideration, none of specific symmetry on the domains is required. Our strategy is to extend the stream function appropriately into a larger symmetric domain, and then use the `local' symmetry results. The legitimacy of this extension is guaranteed by the enhanced boundary conditions.

\subsection{Proof of Theorem \ref{th3} }
 Consider the corresponding stream function $u\in C^3(\overline{D})$ of the steady flow $\mv$. Up to translation, we may assume that the unique critical point of $u$ in $D$ is the origin. Furthermore, by adding constants or multiplying $u$ by a constant number, we may assume, without loss of generality, that
\begin{equation*}
    u=0\ \, \text{on} \  \partial D,\ \ \ u(0)=1,\ \ \ \text{and}\ \ \ 0<u<1\ \, \text{in}\ D\backslash \{0\}.
\end{equation*}
 Proceeding as in the proof of Theorem \ref{th1} and using the continuity of $\Delta u$, we see that there exists a function $f\in C([0,1])\cap C^1\left((0,1)\right)$ such that
  \begin{equation*}
   \Delta u+f(u)=0\ \ \ \text{in}\ \ D.
  \end{equation*}
Let $R>0$ be such that $\overline{D}\subset B_R$. We now extend $u$ by zero to $B_R$, still denoted by $u$. Since $\nabla u=0$ on $\partial D$, it holds that $u\in C^1(\overline{B_R})$. Moreover, $u$ is a weak solution to the following problem:
	\begin{align*}
		\begin{cases}
			-\Delta u=g(u),\ \ u\ge 0,&\text{in}\ \ B_R,\\
             u=0,\ \ &\text{on}\  \ \partial B_R,
\end{cases}
	\end{align*}
where
\begin{equation*}
  g(t)=f(t)-f(0)\chi(t)\  \ \text{with}\ \ \chi(t):=\begin{cases}
                                         1, & \mbox{if }\  t=0, \\
                                         0, & \mbox{if}\ \ t>0.
                                       \end{cases}
\end{equation*}
By Proposition \ref{pro2}, we see that $u$ is locally symmetric. Recall that $\nabla u\not =0$ in $D\backslash\{0\}$ and $D$ is connected. We conclude that the union has only one term, that is, $D$ is a disk centered at the origin, and $u$ is radially symmetric and decreasing. The proof is thereby complete.

\subsection{Proof of Theorem \ref{th4} } The proof is similar to that of Theorem \ref{th3}. Consider the corresponding stream function $u\in C^3(\overline{D})$ of the steady flow $\mv$. By adding constants or multiplying $u$ by a constant number, we may assume, without loss of generality, that
\begin{equation*}
    u=1\ \, \text{on} \  \partial \Omega_1,\ \ \ u=0 \ \, \text{on} \  \partial \Omega_2    \ \ \ \text{and}\ \ \ 0<u<1\ \, \text{in}\ D.
\end{equation*}
There exists a function $f\in C([0,1])\cap C^1\left((0,1)\right)$ such that
  \begin{equation*}
   \Delta u+f(u)=0\ \ \ \text{in}\ \ D.
  \end{equation*}
  Let $R>0$ be such that $\overline{D}\subset B_R$. We now extend $u$ by $1$ inside $\Omega_1$ and by zero outside $\Omega_2$, still denoted by $u$. We check that $u\in C^1(\overline{B_R})$ is a weak solution to the following problem:
	\begin{align*}
		\begin{cases}
			-\Delta u=g(u),\ \ u\ge 0,&\text{in}\ \ B_R,\\
             u=0,\ \ &\text{on}\  \ \partial B_R,
\end{cases}
	\end{align*}
where
\begin{equation*}
  g(t)=f(t)-f(0)\chi_1(t)-f(1)\chi_2(t),\  \ \ \chi_1(t):=\begin{cases}
                                         1, & \mbox{if }\  t=0, \\
                                         0, & \mbox{if}\ \ t>0,
                                       \end{cases}\ \ \  \chi_2(t):=\begin{cases}
                                         0, & \mbox{if }\  0\le t<1, \\
                                         1, & \mbox{if}\ \ t\ge 1.
                                         \end{cases}
\end{equation*}
   It follows from Proposition \ref{pro2} that $u$ is locally symmetric. Recall that $\nabla u\not =0$ in $D$ and $D$ is connected. We conclude that the union has only one term, that is, $D$ is an annulus, and $u$ is radially symmetric and decreasing. The proof is thereby complete.

\begin{remark}
  We remark that partial cases of Theorems \ref{th3} and \ref{th4} can also be directly proved by using radial symmetry of $C^3$ solutions to overdetermined elliptic equations with non-Lipschitz nonlinearity; see Theorems 6.1 and 5.1 in \cite{Ruiz}. Here, for the sake of brevity, we have adopted a unified approach.
\end{remark}

\section{Proof of Theorem \ref{th6}}\label{s5}

In this section, we give the proof of Theorem \ref{th6}. Here the situation is a little more complicated. The previous method does not seem to work anymore, and we need to establish some new Serrin-type results.
\subsection{Proof of Theorem \ref{th6}}  Consider the corresponding stream function $u$ of the steady flow $\mv$. By adding constants or multiplying $u$ by a constant number, we may assume, without loss of generality, that
\begin{equation*}
    u=1\ \, \text{on} \  \partial \Omega_1,\ \ \ u=0 \ \, \text{on} \  \partial \Omega_2    \ \ \ \text{and}\ \ \ 0<u<1\ \, \text{in}\ D.
\end{equation*}
By Proposition \ref{pro1-2}, there exists a function $f\in C([0,1])\cap C^1\left((0,1)\right)$ such that
  \begin{equation*}
   \Delta u+f(u)=0\ \ \ \text{in}\ \ D.
  \end{equation*}
By assumption, we have that $|\nabla u|=c_1$ on $\partial \Omega_1$ and $|\nabla u|=c_2$ on $\partial \Omega_2$ for some non-negative numbers $c_1$ and $c_2$. In view of Theorem \ref{th4} proved earlier and Theorem 1.13 of \cite{HN}, it remains to consider the case when only one of $c_1$ and $c_2$ is equal to zero. Let us consider the two cases separately.

Assume first that $c_1=0$ and $c_2\not=0$. If $f(1)\not=0$, then the conclusion immediately follows from Proposition \ref{pro3} below. We now assume that $f(1)=0$. We extend $u$ by $1$ inside $\Omega_1$, still denoted by $u$. We check that $u\in C^1(\overline{\Omega_2})$ is a weak solution of the following problem:
  \begin{align*}
		\begin{cases}
			-\Delta u=f(u),\ \ u> 0,&\text{in}\ \ \Omega_2,\\
             u=0,\ \ |\nabla u|=c_2, &\text{on}\ \, \partial \Omega_2.
\end{cases}
	\end{align*}
By Proposition \ref{pro4} below, we see that $\Omega_2=B_R(x)$ for some $R>0$ and $x\in \R^2$. Moreover, $u$ is locally symmetric in $B_R(x)$. Recall that $\nabla u\not =0$ in $D$ and $D$ is connected. We conclude that the union has only one term, that is, $D$ is an annulus, and $u$ is radially symmetric and decreasing.

Assume now that $c_1\not=0$ and $c_2=0$. If $f(0)\not=0$, then the conclusion immediately follows from Proposition \ref{pro3} below. It remains to consider the case when $f(0)=0$.
Note that $u\in C^3(\overline{D})$ is a solution of the following problem:
  \begin{align*}
		\begin{cases}
			-\Delta u=f(u),\ \ 0< u<1,&\text{in}\ \ D,\\
             u=1,\ \ |\nabla u|=c_1>0, &\text{on}\ \, \partial \Omega_1,\\
             u=0,\ \ |\nabla u|=0, &\text{on}\ \, \partial \Omega_2.
\end{cases}
	\end{align*}
By Proposition \ref{pro5} below, we see that $u$ is locally symmetric. Recall that $\nabla u\not =0$ in $D=\{x\in \R^2: 0<u(x)<1\}$ and $D$ is connected. We conclude that the union has only one term, that is, $D$ is an annulus, and $u$ is radially symmetric and decreasing. The proof is thereby complete.

\subsection{Serrin-type results}
Below we state and prove some Serrin-type results on the symmetries for overdetermined elliptic problems, which have been used in the proof of Theorem \ref{th6}. We refer the reader to \cite{Bro2, Far, Nit, Rei, Se, Si} and the references therein for more extensive discussions on the Serrin-type overdetermined problems.

We first recall two useful symmetry results due to Ruiz \cite{Ruiz} and Brock \cite{Bro2}.
\begin{proposition}[\cite{Ruiz}, Theorem 5.1]\label{pro3}
  Let $\Omega_1$ and $\Omega_2$ be two $C^2$ non-empty simply connected bounded domains of $\mathbb{R}^2$ such that $\overline{\Omega_1}\subset \Omega_2$, and denote
  \begin{equation*}
    D=\Omega_2\backslash \overline{\Omega_1}.
  \end{equation*}
  Let $f\in C([0,1])\cap C^1(0,1)$ and $u\in C^3(\overline{D})$ be a solution of the overdetermined problem
  	\begin{align*}
		\begin{cases}
			-\Delta u=f(u),\ \ u\in (0,1),&\text{in}\ \ D,\\
             u=1,\ \ |\nabla u|=c_1,&\text{on}\  \ \partial \Omega_1,\\
             u=0,\ \ |\nabla u|=c_2,&\text{on}\  \ \partial \Omega_2,\\
\end{cases}
	\end{align*}
where $c_1$ and $c_2$ are constant numbers. If $c_i=0$ we also assume that $f(2-i)\not=0,\, i=1,2$. Then $u$ is a radially symmetric function with respect to a point $x\in \R^2$ and $\Omega_i=B_{R_i}(x)$, $R_2>R_1>0$. Moreover, $u$ is strictly radially decreasing.
\end{proposition}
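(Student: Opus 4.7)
The plan is to adapt Serrin's moving-plane method to the annular setting, handling the merely continuous nature of $f$ at the endpoints $0$ and $1$ by leveraging both overdetermined boundary conditions and the nondegeneracy hypothesis $c_i=0 \Rightarrow f(2-i)\neq 0$.

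First I would fix a unit direction $e$ and the family of hyperplanes $T_\lambda = \{x\in\R^2 : x\cdot e = \lambda\}$. Starting with $\lambda$ so large that $T_\lambda$ lies entirely to one side of $\overline{\Omega_2}$, decrease $\lambda$ and denote by $\Sigma_\lambda$ the cap $\{x\in \Omega_2 : x\cdot e>\lambda\}$, by $R_\lambda$ the reflection across $T_\lambda$, and set $u_\lambda(x)=u(R_\lambda x)$, $w_\lambda = u_\lambda - u$. The crucial observation is that because $0<u<1$ strictly in $D$ and $f\in C^1(0,1)$, wherever both $u$ and $u_\lambda$ remain in a compact subinterval of $(0,1)$ the function $w_\lambda$ satisfies a linear elliptic equation with bounded coefficients, so the strong maximum principle and the Hopf lemma apply. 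All regularity concerns therefore concentrate at the two boundary components.

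Next I would define the critical parameter $\lambda^\ast = \inf\{\lambda : w_\mu \ge 0 \text{ on } R_\mu(\Sigma_\mu)\cap D \text{ for every } \mu \ge \lambda\}$ and analyze the geometry at $\lambda=\lambda^\ast$. Standard continuation arguments show that at $\lambda^\ast$ either (i) $R_{\lambda^\ast}(\Sigma_{\lambda^\ast})$ becomes internally tangent to $\partial \Omega_2$ at a point not on $T_{\lambda^\ast}$, or (ii) $T_{\lambda^\ast}$ is orthogonal to $\partial \Omega_2$ at some point, or (iii) an analogous tangency or orthogonality occurs at $\partial \Omega_1$. In each configuration the overdetermined identity $|\nabla u|=c_i$ is used to force $w_{\lambda^\ast}\equiv 0$ on a non-empty open subset and hence, by the strong maximum principle for the linear equation satisfied by $w_{\lambda^\ast}$, to give symmetry of $u$ across $T_{\lambda^\ast}$. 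When the relevant $c_i$ is positive, this is Serrin's original boundary point lemma; when $c_i=0$, the hypothesis $f(2-i)\neq 0$ ensures $\Delta u = -f(2-i)\neq 0$ on $\partial \Omega_i$, and a second-order (corner) Hopf lemma replaces the usual first-order one and yields the same contradiction. Since the direction $e$ is arbitrary, $u$ is symmetric across some hyperplane in every direction, which forces radial symmetry about a single point $x\in \R^2$ and concentricity of the balls $\Omega_i=B_{R_i}(x)$ with $R_2>R_1>0$. The strict monotonicity $U'(r)<0$ then follows from the absence of interior critical points of $u$ in $D$ combined with the radial Hopf lemma.

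\textbf{Main obstacle.} The chief difficulty is the loss of Lipschitz regularity of $f$ at $0$ and $1$: the classical Serrin linearization leads to an equation for $w_{\lambda^\ast}$ whose zero-order coefficient can blow up near $\partial \Omega_i$, precisely where the Hopf lemma is needed. The double overdetermined data, together with the compatibility hypothesis $c_i=0 \Rightarrow f(2-i)\neq 0$, are exactly what one needs to substitute a second-order boundary point lemma for the standard Hopf lemma at each critical configuration, ruling out the degenerate case in which a constant extension of $u$ across $\partial \Omega_i$ would generate a flat region tangent to the boundary and destroy the symmetry.
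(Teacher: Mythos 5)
First, a point of comparison: the paper does not prove this proposition at all --- it is quoted verbatim as Theorem 5.1 of \cite{Ruiz}, so the only ``proof'' in the paper is the citation. Your sketch does follow the same overall strategy as the cited source (moving planes in the spirit of Serrin, with the hypothesis ``$c_i=0\Rightarrow f(2-i)\neq 0$'' feeding a second-order replacement for the Hopf/corner lemma at a degenerate boundary component), so at the level of strategy you are aligned with where the result actually comes from.

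There is, however, a genuine gap in the sketch: the interaction of the reflection with the hole $\overline{\Omega_1}$. You set $u_\lambda(x)=u(R_\lambda x)$ for $x$ in (the reflection of) a cap of $\Omega_2$, but when $R_\lambda x$ lands in $\overline{\Omega_1}$ the value $u(R_\lambda x)$ is simply not defined, so $w_\lambda$ is not defined on the set on which you run the maximum principle; likewise the ``boundary'' of the working region acquires pieces coming from $\partial\Omega_1$ and from $R_\lambda(\partial\Omega_1)$ on which you never verify the sign of $w_\lambda$. This is not a cosmetic issue: Serrin's overdetermined problem on annular domains admits genuinely non-radial solutions (see \cite{Kam} and \cite{Ago}, both cited in the paper), so no moving-plane argument can close without using the specific ordering hypothesis $0<u<1$ in $D$ together with $u=1$ on $\partial\Omega_1$ and $u=0$ on $\partial\Omega_2$. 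Concretely, one must either restrict to the region where both $x$ and $R_\lambda x$ lie in $D$ and check that $w_\lambda\geq 0$ on the new boundary pieces (which is where $u<1=u|_{\partial\Omega_1}$ enters, giving $u_\lambda-u>0$ automatically when $R_\lambda x\in\partial\Omega_1$), or extend $u$ by the constant $1$ across $\Omega_1$ and then track carefully where the differential inequality for $w_\lambda$ survives. Your plan never invokes this ordering, and as written the continuation step and the case analysis at $\lambda^\ast$ (in particular your case (iii)) cannot be carried out. A secondary, smaller point: the ``second-order corner Hopf lemma'' you invoke when $c_i=0$ is precisely the technical heart of \cite{Ruiz}, Theorem 5.1, and needs an actual statement and proof (a two-term Taylor expansion of $u$ and $u_{\lambda^\ast}$ at the touching point using $\Delta u=-f(2-i)\neq 0$ on $\partial\Omega_i$); naming it is a plan, not an argument.
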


\begin{proposition}[\cite{Bro2}, Theorem 1]\label{pro4}
  Let $D$ be a bounded planar domain with $C^2$ boundary. Let $f\in C([0, +\infty))$ and $u\in C^1(\overline{D})$ be a weak solution of the following problem
  \begin{align*}
		\begin{cases}
			-\Delta u=f(u),\ \ u> 0,&\text{in}\ \ D,\\
             u=0,\ \ |\nabla u|=c, &\text{on}\  \ \partial D,
\end{cases}
	\end{align*}
where $c>0$ is a constant number. Then $D=B_R(x)$ for some $R>0$ and $x\in \R^2$. Moreover, $u$ is locally symmetric (after extending  by zero to $\R^2$).
\end{proposition}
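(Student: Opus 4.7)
My plan is to use Brock's continuous Steiner symmetrization, since $f$ is only continuous and the classical Serrin moving planes method fails; the overdetermined condition $|\nabla u|\equiv c$ on $\partial D$ will enter as a Pohozaev-type variational identity. First, I extend $u$ by zero to $\mathbb{R}^2$, obtaining a compactly supported $\tilde u\in C^0(\mathbb{R}^2)\cap W^{1,\infty}(\mathbb{R}^2)$ whose gradient jumps from magnitude $c$ to $0$ across $\partial D$. With $F(s):=\int_0^s f(\tau)\,d\tau$, consider the energy functional
\[
\Phi[v]:=\int_{\mathbb{R}^2}\left(\tfrac{1}{2}|\nabla v|^2-F(v)\right)dx.
\]
The hypotheses $u|_{\partial D}=0$ and $|\nabla u|\equiv c$ on $\partial D$ are precisely the Euler-Lagrange conditions for $(D,u)$ to be a critical point of $\Phi$ under simultaneous variations of the shape $D$ and of the function $u$ (with the natural Dirichlet constraint); a standard Hadamard-type first variation computation confirms that along any smooth compactly supported deformation of $\tilde u$, the first variation of $\Phi$ vanishes.

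Next, for each direction $e\in S^1$ I apply Brock's continuous Steiner symmetrization $\{\tilde u_t^e\}_{t\ge 0}$ with respect to the line $e^\perp$. The properties I will exploit are: (i) equi-measurability, so $\int F(\tilde u_t^e)\,dx$ is constant in $t$; (ii) non-increase and continuity of the Dirichlet integral $t\mapsto\int|\nabla \tilde u_t^e|^2\,dx$; and (iii) realization of $\{\tilde u_t^e\}$ as a generalized one-parameter deformation compatible with the shape derivative of the previous step. Combining the criticality of $\Phi$ at $\tilde u$ with (i)-(iii), the Dirichlet integral must be \emph{constant} on some right-neighbourhood of $t=0$.

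At this point the rigidity (equality case) of continuous Steiner symmetrization, established by Brock in \cite{Bro1} (see Propositions \ref{pro2} and \ref{pro2-2}), implies that $\tilde u$ is locally symmetric in the sense of Definition \ref{de1} with axis $e^\perp$. Since $e\in S^1$ is arbitrary, the decomposition of $V=\{0<\tilde u<\sup\tilde u\}$ into annuli $B_{R_k}(z_k)\setminus Q_{r_k}(z_k)$ must be consistent with local symmetry in every direction. Using connectedness of $D$ together with $|\nabla u|=c>0$ on $\partial D$ (so an annular neighbourhood of $\partial D$ lies in $V$ and $\partial D$ is a regular level set), the decomposition collapses to a single annulus $A=B_R(x)\setminus Q_r(x)$; this forces $D=B_R(x)$ and gives the asserted local symmetry of $u$ on $\overline D$.

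The hardest step will be the rigidity argument, namely translating the shape criticality of $\Phi$ into a non-negative right-derivative of the Dirichlet integral along $\{\tilde u_t^e\}$, which combined with (ii) yields stationarity. The subtlety is that $\tilde u$ is only Lipschitz across $\partial D$, so justifying the shape derivative at $t=0$ requires interpreting the first variation distributionally and using the constancy of $|\nabla u|$ on $\partial D$ to cancel precisely the boundary term produced by the gradient jump; this is exactly where the hypothesis $|\nabla u|\equiv c$ becomes indispensable. A secondary technical point will be the final geometric reduction: ruling out pathological local-symmetry configurations (infinitely many nested annuli, or misaligned symmetry centres across different directions $e$) relies on strong comparison and connectedness arguments made possible by the regularity $u\in C^1(\overline D)$ and the positivity $u>0$ in $D$.
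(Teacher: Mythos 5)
First, note that the paper does not prove this proposition at all: it is quoted verbatim as Theorem~1 of Brock \cite{Bro2} and used as a black box. The closest thing to ``the paper's own proof'' is the proof of Proposition~\ref{pro5}, which adapts Brock's CStS argument to the doubly connected case, so that is the right benchmark for your attempt. Your overall toolbox (CStS in every direction, Brock's local-symmetry criterion, the final reduction from local symmetry plus connectedness and $|\nabla u|=c>0$ on $\partial D$ to a single ball) matches the actual strategy, and your endgame paragraph is essentially correct.

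However, there is a genuine gap at the central analytic step. To invoke the symmetry criterion (Lemma~\ref{le1}) you must prove $\int|\nabla u|^2-\int|\nabla u^t|^2=o(t)$, and you propose to get this from a Hadamard-type shape criticality of $\Phi[v]=\int(\tfrac12|\nabla v|^2-F(v))$ together with the assertion that the CStS flow is ``a generalized one-parameter deformation compatible with the shape derivative.'' That assertion is exactly what is false, or at least completely unjustified: the family $t\mapsto u^t$ is not a differentiable deformation of $\tilde u$ in any norm for which a first-variation formula applies, the symmetrized domains $D^t$ are not a $C^1$ family, and $u^t$ is not a solution of the PDE on $D^t$, so criticality of $\Phi$ under smooth compactly supported or smooth domain variations says nothing about $\Phi[u^t]$. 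The actual mechanism (in \cite{Bro2} and in the paper's proof of Proposition~\ref{pro5}) is an inequality chain, not a distributional cancellation: one tests the weak equation with $w^t-w^0$ for a truncation $w^t=\min\{u^t,\cdot\}$, controls the nonlinear term via Cavalieri's principle and the modulus of continuity of $f$ (giving $I_2(t)\ge o(t)$), and --- this is where $|\nabla u|\equiv c$ is indispensable --- controls the boundary-layer terms by the coarea formula: $\mathcal{L}^2(M_1(t))\le C_0t$ and
\begin{equation*}
\tfrac12\int_{M_1(t)}|\nabla u|^2-\tfrac12\int_{M_2(t)}|\nabla u|^2
=\tfrac12\int_{M_1(t)}\bigl(|\nabla u|^2-c^2\bigr)-\tfrac12\int_{M_2(t)}\bigl(|\nabla u|^2-c^2\bigr)+\tfrac{c^2}{2}\bigl(\mathcal{L}^2(M_1(t))-\mathcal{L}^2(M_2(t))\bigr)=o(t),
\end{equation*}
using $\mathcal{L}^2(M_1(t))\ge\mathcal{L}^2(M_2(t))$ and the fact that both sets collapse onto $\partial D$ where $|\nabla u|^2\to c^2$ uniformly. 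Without reproducing this quantitative argument (or an equivalent one), your proof does not close; the ``shape criticality'' framing is a heuristic for why $|\nabla u|=c$ should matter, not a substitute for the estimate. A minor additional point: the rigidity statement you need is Lemmas~\ref{le1} and~\ref{le2} (Brock's Theorems~6.2 and~6.1), not Propositions~\ref{pro2} and~\ref{pro2-2}, which are the already-packaged corollaries for Dirichlet problems on balls and do not apply directly here.
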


\begin{proposition}\label{pro5}
  Let $\Omega_1$ and $\Omega_2$ be two $C^2$ non-empty simply connected bounded domains of $\mathbb{R}^2$ such that $\overline{\Omega_1}\subset \Omega_2$, and denote
  \begin{equation*}
    D=\Omega_2\backslash \overline{\Omega_1}.
  \end{equation*}
 Let $f\in C([0, +\infty))$ satisfy $f(0)=0$ and $u\in C^1(\overline{D})$ be a weak solution of the following problem:
  \begin{align*}
		\begin{cases}
			-\Delta u=f(u),\ \ 0< u<1,&\text{in}\ \ D,\\
             u=1,\ \ |\nabla u|=c, &\text{on}\ \, \partial \Omega_1,\\
             u=0,\ \ |\nabla u|=0, &\text{on}\ \, \partial \Omega_2.
\end{cases}
	\end{align*}
where $c>0$ is a constant number. Then $\Omega_i=B_{R_i}(x)$, $i=1,2$, $R_2>R_1>0$. Moreover, if we extend $u$ by $1$ inside $\Omega_1$ and by $0$ outside $\Omega_2$, denoted by $\bar{u}$, then $\bar{u}$ is locally symmetric.
\end{proposition}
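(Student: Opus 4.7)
The plan is to extend $u$ to a global function on $\mathbb{R}^2$, show that the extension is locally symmetric in the sense of Definition \ref{de1}, and then deduce the radial structure as in Theorem \ref{th4}. First I would extend $u$ by $0$ on $\mathbb{R}^2\setminus\overline{\Omega_2}$; call the result $\tilde u$. Since $u=0$ and $|\nabla u|=0$ on $\partial\Omega_2$ and $f(0)=0$, one obtains $\tilde u\in C^1(\mathbb{R}^2)\cap W^{1,\infty}(\mathbb{R}^2)$ satisfying $-\Delta\tilde u=f(\tilde u)$ pointwise on $\mathbb{R}^2\setminus\overline{\Omega_1}$, with $\tilde u=1$ and $|\nabla\tilde u|=c$ on $\partial\Omega_1$. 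Further extending by $1$ on $\overline{\Omega_1}$ yields $\bar u\in C(\mathbb{R}^2)\cap W^{1,\infty}(\mathbb{R}^2)$ valued in $[0,1]$, with $\bar u\equiv 1$ on $\overline{\Omega_1}$, $\bar u\equiv 0$ outside $\Omega_2$, and the normal derivative jumping by $c$ across $\partial\Omega_1$.

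The key obstruction is that $\bar u$ is \emph{not} a distributional solution of $-\Delta\bar u=f(\bar u)$ on any ball $B_R\supset\overline{\Omega_2}$: a direct integration by parts gives $-\Delta\bar u=f(\bar u)\chi_D+c\,\sigma_{\partial\Omega_1}-f(1)\chi_{\Omega_1}$, where $\sigma_{\partial\Omega_1}$ denotes the $1$-dimensional surface measure on $\partial\Omega_1$. Thus Proposition \ref{pro2-2} does not apply off the shelf, and a new Serrin-type theorem must be developed. The strategy is to adapt Brock's continuous Steiner symmetrization argument, as employed in the proof of Proposition \ref{pro4}, to this ``inner obstacle'' situation. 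For each direction $e\in\mathbb{S}^1$ I would consider the family $\{\bar u^\tau\}_{\tau\ge 0}$ of continuous Steiner symmetrizations of $\bar u$ orthogonal to $e$. These preserve the distribution function of $\bar u$, so $\int F(\bar u^\tau)\,dx$ is constant in $\tau$ for any primitive $F$ of $f$, while $\tau\mapsto\int|\nabla\bar u^\tau|^2\,dx$ is non-increasing. The overdetermined conditions $|\nabla u|=c$ on $\partial\Omega_1$ and $|\nabla u|=0$ on $\partial\Omega_2$ are precisely the matching data needed to handle the two mismatch terms above: their constancy on each boundary makes the boundary contributions appearing in the energy comparison invariant under the symmetrization, with the ``obstacle'' $\overline{\Omega_1}$ being rearranged together with the level sets of $\bar u$. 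This should force the Dirichlet energy to be constant in $\tau$, whence $\bar u^\tau\equiv\bar u$ up to a translation along $e$ for every $\tau\ge 0$, yielding local symmetry of $\bar u$ with respect to a line orthogonal to $e$. Letting $e$ range over $\mathbb{S}^1$ produces local symmetry in the sense of Definition \ref{de1}.

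Granted local symmetry of $\bar u$, the conclusion follows exactly as in the proof of Theorem \ref{th4}: since $\nabla u\ne 0$ throughout the connected set $D=\{0<\bar u<1\}$, the decomposition in Definition \ref{de1} reduces to a single annulus $A=B_{R'}(z)\setminus Q_{r'}(z)$, forcing $\Omega_1=B_{r'}(z)$ and $\Omega_2=B_{R'}(z)$ up to translation, with $u$ radially symmetric and strictly decreasing in $|x-z|$. The hard part will be the middle paragraph: Brock's original theorem (Proposition \ref{pro4}) handles an overdetermined condition on the \emph{outer} boundary of a single domain, and its proof leans on the extension-by-zero being a weak solution up to a single boundary measure term. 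In the present case the overdetermined condition sits on the \emph{inner} boundary, and one must carefully bookkeep both the surface contribution $c\,\sigma_{\partial\Omega_1}$ and the bulk contribution $-f(1)\chi_{\Omega_1}$ under the continuous Steiner symmetrization. The constancy of $|\nabla u|$ on $\partial\Omega_1$ is the compatibility condition that should let the argument close, but making this rigorous---in particular allowing for $f(1)\neq 0$---is where the genuine new input, beyond \cite{Bro1, Bro2}, is required.
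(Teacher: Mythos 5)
Your overall strategy coincides with the paper's: extend $u$ by $1$ inside $\Omega_1$ and by $0$ outside $\Omega_2$ (the latter being harmless because $|\nabla u|=0$ on $\partial\Omega_2$ and $f(0)=0$), run a continuous Steiner symmetrization in each direction, and use the constancy of $|\nabla u|=c$ on $\partial\Omega_1$ to control the inner-boundary mismatch. You have also correctly diagnosed that $\bar u$ fails to be a weak solution across $\partial\Omega_1$ and that this is where new work is needed. But the proposal stops exactly there: the sentence ``this should force the Dirichlet energy to be constant in $\tau$'' is the conclusion of the hard estimate, not an argument for it, and you explicitly defer the ``genuine new input.'' As written, this is a plan, not a proof. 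The paper closes the gap with a specific truncation device that your outline does not contain: setting $\kappa=2RL$ and using $|u^t-u|\le LRt$, one gets $u^t\ge 1-\kappa t$ on $\overline{\Omega_1}$, so the comparison is run between $w^t=\min\{u^t,1-\kappa t\}$ and $w^0=\min\{u,1-\kappa t\}$. These agree (and are constant) on $\overline{\Omega_1}$, so testing the equation on $B_R\setminus\overline{\Omega_1}$ with $w^t-w^0$ produces \emph{no} surface term on $\partial\Omega_1$ and never sees $f(1)$ — the measure $c\,\sigma_{\partial\Omega_1}$ and the bulk term you worry about are absorbed into the top level set rather than ``rearranged together with'' it. What remains is quantitative: (i) a coarea/divergence-theorem bound $\mathcal{L}^2(\{1-\kappa t\le u<1\})\le C_0 t$, valid because $|\nabla u|\ge c/2$ near $\partial\Omega_1$; (ii) the estimate $\frac12\int_{M_1(t)}|\nabla u|^2-\frac12\int_{M_2(t)}|\nabla u|^2=o(t)$, which combines the equimeasurability inequality $\mathcal{L}^2(M_1(t))\ge\mathcal{L}^2(M_2(t))$ with the uniform convergence $|\nabla u|^2\to c^2$ on these shrinking collars — this is the precise point where constancy of $|\nabla u|$ on $\partial\Omega_1$ is consumed; and (iii) Cavalieri's principle plus uniform continuity of $f$ to show the nonlinear term is $o(t)$. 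None of these appears in your proposal, and without them the argument does not close.

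A secondary but genuine error: constancy of $\int|\nabla\bar u^\tau|^2$ in $\tau$ does \emph{not} imply $\bar u^\tau\equiv\bar u$ up to a translation along $e$. If it did, you would obtain full Steiner symmetry in every direction and hence global radial symmetry, which is strictly stronger than local symmetry and is exactly what Brock's counterexamples rule out at this level of regularity of $f$. The correct criterion (Lemma \ref{le1}) only requires the one-sided derivative at $\tau=0^+$ of the Dirichlet energy to vanish, and only yields local symmetry in that direction; one then invokes Lemma \ref{le2} over all directions. Your final paragraph (reduction to a single annulus via connectedness of $D$ and $\nabla u\ne 0$) is correct and matches the paper.
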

The proof of Proposition \ref{pro5} is similar to that of Proposition \ref{pro4} in \cite{Bro2}, which is based on the  method of continuous Steiner symmetrization (CStS). For the convenience of the reader we first recall the definition of the continuous Steiner symmetrization (CStS); see \cite{Bro0, Bro1, Bro3, Bro2, Soly}. We shall draw on the presentation of \cite{Bro2}.

We start with some notation. Let $N\ge 2$ be an integer. For points $x\in \R^N$, we write $x=(x_1, x')$, where $x'=(x_2, \cdots, x_N)$. Let $\mathcal{L}^k$ denote $k$-dimensional Lebesgue measure. By $\mathcal{M}(\R^N)$ we denote the family of Lebesgue measurable sets in $\R^N$ with finite measure. For a function $u:\R^N\to \R$, let $\{u>a\}$ and $\{b\ge u>a\}$ denote the sets $\left\{x\in \R^N: u(x)>a \right\}$ and $\left\{x\in \R^N: b\ge u(x)>a \right\}$, respectively, ($a, b\in \R$, $a<b$). Let $\mathcal{S}(\R^N)$ denote the class of real, nonnegative measurable functions $u$ satisfying
\begin{equation*}
  \LL^N(\{u>c\})<+\infty,\ \ \forall\, c>\inf u.
\end{equation*}

The following is the definition of the well-known Steiner symmetrization; see e.g. \cite{Bro3, Kaw, Lieb}.
\begin{definition}[Steiner symmetrization]
\ \ \
  \begin{itemize}
    \item [(i)]For any set $M\in \M(\R)$ let
    \begin{equation*}
      M^*:=\left(-\frac{1}{2}\LL^1(M),\  \frac{1}{2}\LL^1(M)\right).
    \end{equation*}
    \item [(ii)]Let $M\in \M(\R^N)$. For every $x'\in \R^{N-1}$ let
    \begin{equation*}
      M(x'):=\left\{x_1\in \R: (x_1, x')\in M \right\}.
    \end{equation*}
    The set
    \begin{equation*}
      M^*:=\left\{ x=(x_1, x'): x_1\in \left(M(x') \right)^*, x'\in \R^{N-1}\right\}.
    \end{equation*}
    is called the Steiner symmetrization of $M$ (with respect to $x_1$).
    \item [(iii)]If $u\in \s$, then the function
    \begin{equation*}
      u^*(x):=\begin{cases}
                \sup \left\{c>\inf u: x\in \left\{u>c \right\}^*\right\}, & \mbox{if }\  x\in \bigcup_{c>\inf u} \left\{u>c \right\}^*, \\
                \inf u, & \mbox{if }\  x\not\in \bigcup_{c>\inf u} \left\{u>c \right\}^*,
              \end{cases}
    \end{equation*}
   is called the Steiner symmetrization of $u$ (with respect to $x_1$).
  \end{itemize}
\end{definition}

\begin{definition}[Continuous symmetrization of sets in $\M(\R)$]
  A family of set transformations
  \begin{equation*}
    \T_t:\  \M(\R)\to \M(\R),\ \ \  0\le t\le +\infty,
  \end{equation*}
is called a continuous symmetrization on $\R$ if it satisfies the following properties: ($M, E\in \M(\R)$, $0\le s, t\le +\infty$)
\begin{itemize}
  \item [(i)]Equimeasurability property:\, $\LL^1(\T_t(M))=\LL^1(M)$,

    \smallskip
  \item [(ii)]Monotonicity property:\, If $M\subset E$, then $\T_t(M)\subset \T_t(E)$,

    \smallskip
  \item [(iii)]Semigroup property:\, $\T_t(\T_s(M))=\T_{s+t}(M)$,

    \smallskip
  \item [(iv)]Interval property:\, If $M$ is an interval $[x-R,\ x+R]$, ($x\in \R$, $R>0$), then $\T_t(M):=[xe^{-t}-R,\ xe^{-t}+R]$,

    \smallskip
  \item [(v)]Open/compact set property: If $M$ is open/compact, then $\T_t(M)$ is open/compact.
\end{itemize}
\end{definition}

The existence and uniqueness of the family $\T_t$, $0\le t \le +\infty$, can be found in \cite{Bro1}, Theorem 2.1.

\begin{definition}[Continuous Steiner symmetrization (CStS)]
  \ \ \
  \begin{itemize}
    \item [(i)]Let $M\in \M(\R^N)$. The family of sets
    \begin{equation*}
      \T_t(M):=\left\{x=(x_1, x'): x_1\in \T_t(M(x')), x'\in \R^{N-1} \right\},\ \ \ 0\le t\le +\infty,
    \end{equation*}
    is called the continuous Steiner symmetrization (CStS) of $M$ (with respect to $x_1$).
    \item [(ii)]Let $u\in \s$. The family of functions $\T_t(u)$, $0\le t \le +\infty$, defined by
    \begin{equation*}
      \T_t(u)(x):=\begin{cases}
                \sup \left\{c>\inf u: x\in \T_t\left(\left\{u>c \right\}\right)\right\}, & \mbox{if }\ x\in \bigcup_{c>\inf u} \T_t\left(\left\{u>c \right\}\right), \\
                \inf u, & \mbox{if }\ x\not\in \bigcup_{c>\inf u} \T_t\left(\left\{u>c \right\}\right),
              \end{cases}
    \end{equation*}
    is called CStS of $u$ (with respect to $x_1$).
  \end{itemize}
\end{definition}
For convenience, we will henceforth simply write $M^t$ and $u^t$ for the sets $\T_t(M)$, respectively for the functions $\T_t(u)$, ($t\in [0,+\infty]$). Below we summarize basic properties of CStS, which have been proved by Brock in \cite{Bro0, Bro1}.

\begin{proposition}\label{pro6}
  Let $M\in \M(\R^N)$, $u,v\in \s,\, t\in [0,+\infty]$. Then
  \begin{itemize}
    \item [(1)]Equimeasurability:
    \begin{equation*}
      \LL^N(M)=\LL^N(M^t)\ \ \ \text{and}\ \ \ \left\{u^t>c \right\}=\left\{u>c \right\}^t,\  \forall\, c>\inf u.
    \end{equation*}

    \smallskip
    \item [(2)]Monotonicity: If $u\le v$, then $u^t\le v^t$.

    \smallskip
    \item [(3)]Commutativity: If $\phi: [0, +\infty)\to [0, +\infty)$ is bounded and nondecreasing, then
    \begin{equation*}
      \phi(u^t)=[\phi(u)]^t.
    \end{equation*}

    \smallskip
    \item [(4)]Homotopy:
    \begin{equation*}
      M^0=M,\ \ \ , u^0=u,\ \ \ M^\infty =M^*,\ \ \ u^\infty=u^*.
    \end{equation*}
Furthermore, from the construction of the CStS it follows that, if $M=M^*$ or $u=u^*$, then $M^t=M$, respectively, $u=u^t$ for all $t\in [0, +\infty]$.

    \smallskip
        \item [(5)]Cavalieri's pinciple: If $F$ is continuous and if $F(u)\in L^1(\R^N)$ then
        \begin{equation*}
          \int_{\R^N} F(u)\d x= \int_{\R^N} F(u^t)\d x.
        \end{equation*}

    \smallskip
    \item [(6)]Continuity in $L^p$: If $t_n\to t $ as $n\to +\infty$ and $u\in L^p(\R^N)$ for some $p\in [1, +\infty)$, then
    \begin{equation*}
      \lim_{n\to +\infty}\|u^{t_n}-u^t\|_p=0.
    \end{equation*}

    \smallskip
        \item [(7)]Nonexpansivity in $L^p$: If $u, v\in L^p(\R^N)$ for some $p\in [1, +\infty)$, then
        \begin{equation*}
          \|u^{t}-v^t\|_p\le \|u-v\|_p.
        \end{equation*}

          \smallskip
        \item [(8)]Hardy-Littlewood inequality: If $u, v\in L^2(\R^N)$ then
        \begin{equation*}
           \int_{\R^N} u^t v^t\d x\ge \int_{\R^N} u v\d x.
        \end{equation*}

         \smallskip
        \item [(9)]If $u$ is Lipschitz continuous with Lipschitz constant $L$, then $u^t$ is Lipschitz continuous, too, with Lipschitz constant less than or equal to $L$.

         \smallskip
        \item [(10)] If $\text{supp}\, u\subset B_R$ for some $R>0$, then we also have $\text{supp}\, u^t\subset B_R$. If, in addition, $u$ is Lipschitz continuous with Lipschitz constant $L$, then we have
            \begin{equation*}
              |u^t(x)-u(x)|\le LR\, t,\ \ \ \forall\, x\in B_R.
            \end{equation*}
           Furthermore, there holds
           \begin{equation}\label{6-1}
             \int_{B_R}G(|\nabla u^t|)\d x \le  \int_{B_R}G(|\nabla u|)\d x,
           \end{equation}
           for every convex function $G: [0, +\infty) \to [0, +\infty)$ with $G(0)=0$.
  \end{itemize}
\end{proposition}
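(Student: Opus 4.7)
The plan is to verify each of the ten properties by reducing to the defining one-dimensional axioms of $\T_t:\M(\R)\to\M(\R)$ (equimeasurability, monotonicity, semigroup, interval and open/compact set properties), propagating to $\R^N$ slice by slice via Fubini (the flow acts only in the $x_1$ direction), and lifting from sets to functions through the defining formula $\{u^t>c\}=\{u>c\}^t$ for every $c>\inf u$ together with the layer-cake identity
\begin{equation*}
u(x)=\inf u+\int_{\inf u}^{+\infty}\chi_{\{u>c\}}(x)\,\d c.
\end{equation*}
This bridge identity is what lets every statement about functions be read off from an analogous statement about sets.

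First I would dispatch properties (1)--(5). Set equimeasurability in $\R^N$ is Fubini applied to the one-dimensional axiom, and the super-level set identity is built into the definition of $u^t$, giving (1). Monotonicity (2) is immediate: $u\le v$ forces $\{u>c\}\subset\{v>c\}$, and slice-wise monotonicity of $\T_t$ transfers the inclusion to $u^t$ and $v^t$. Commutativity (3) follows because, for $\phi$ nondecreasing and bounded, the super-level sets $\{\phi(u)>c\}$ coincide (up to null sets) with super-level sets of $u$. Homotopy (4) reduces to showing $M^0=M$ and $M^\infty=M^*$; for intervals this is the interval axiom, and for general measurable sets one approximates by countable unions of intervals and passes to the limit using monotonicity and the open/compact axiom. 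Cavalieri (5) is then a direct layer-cake consequence of (1).

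Next I would handle the $L^p$ properties (6)--(8). For nonexpansivity (7), observe first that for measurable $A,B\subset\R$ of finite measure one has $\LL^1(A^t\triangle B^t)\le \LL^1(A\triangle B)$, an elementary consequence of the semigroup and equimeasurability axioms; Fubini lifts this to $\R^N$. Via the layer-cake representation this gives $\|u^t-v^t\|_1\le \|u-v\|_1$ on simple functions, extended to $L^1$ by density, and the general $L^p$ case then follows by interpolation with the $L^\infty$ bound (which is trivial from the super-level set identity). For continuity (6), I would first prove continuity of $t\mapsto\chi_{M^t}$ in $L^1(\R^N)$ from the corresponding one-dimensional slice-wise continuity (itself a consequence of semigroup plus interval property) and dominated convergence; density combined with (7) delivers the full statement. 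The Hardy--Littlewood inequality (8) follows from the double layer-cake identity
\begin{equation*}
\int_{\R^N}uv\,\d x=\int_0^\infty\!\int_0^\infty \LL^N\bigl(\{u>s\}\cap\{v>r\}\bigr)\,\d s\,\d r,
\end{equation*}
together with the core measure inequality $\LL^N(A\cap B)\le \LL^N(A^t\cap B^t)$, which again reduces slice-wise to a statement about subsets of $\R$.

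Finally, the geometric properties (9) and (10). For Lipschitz preservation (9), I would exploit that the one-dimensional continuous symmetrization does not increase moduli of continuity, so that $|u^t(x_1,x')-u^t(y_1,x')|\le L|x_1-y_1|$ directly; in the transverse directions the bound is recovered via Fubini and an absolute continuity argument on almost every line parallel to $x_1$. Support containment in (10) is immediate from the interval axiom applied to each slice of $\text{supp}\,u$. The pointwise bound $|u^t(x)-u(x)|\le LRt$ comes from estimating the one-dimensional displacement of each slice: the interval axiom yields a shift of at most $|xe^{-t}-x|\le Rt$, and the Lipschitz constant converts the shift into a value difference. The P\'olya--Szeg\H{o}-type inequality \eqref{6-1} for convex $G$ with $G(0)=0$ is the main obstacle: I would either (a) differentiate $t\mapsto \int_{B_R}G(|\nabla u^t|)\,\d x$ in time, decompose via the coarea formula into level-set contributions, and show the right derivative is nonpositive by analyzing the one-dimensional velocity field driving the symmetrization; or (b) follow Brock's discretization strategy, approximating CStS by a sequence of two-point polarizations, each of which does not increase $\int G(|\nabla u|)\,\d x$ by a pointwise rearrangement argument, and then passing to the limit using the $L^p$-continuity (6). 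The genuinely delicate points are controlling concentration of $|\nabla u|$ on flat pieces of $u$ (where the coarea integrand is ill-behaved) and ensuring convergence of the polarization discretization, both of which require careful bookkeeping on the structure of critical sets of $u$.
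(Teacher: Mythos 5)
The paper does not actually prove this proposition: it is stated as a summary of properties ``which have been proved by Brock in \cite{Bro0, Bro1}'' (and the existence, uniqueness and continuity of the one-dimensional family $\T_t$ is itself a theorem there), so you are attempting something the authors deliberately outsource. That said, your reduction scheme --- the one-dimensional axioms, lifted to $\R^N$ slice by slice via Fubini, then to functions through $\{u^t>c\}=\{u>c\}^t$ and the layer-cake formula --- is exactly Brock's architecture, and items (1)--(5), (8), (9) and the first two claims of (10) are handled correctly at the level of a sketch. For instance, your key inequality for (8), $\LL^N(A^t\cap B^t)\ge\LL^N(A\cap B)$, does follow from monotonicity applied to $A\cap B\subset A$, $A\cap B\subset B$ together with equimeasurability, and your symmetric-difference inequality for (7) follows the same way.

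Three points need repair. First, in (7) you cannot obtain the $L^p$ case ``by interpolation with the $L^\infty$ bound'': Riesz--Thorin and its relatives apply to \emph{linear} operators, and $u\mapsto u^t$ is nonlinear. The correct route is to prove $\int G(u^t-v^t)\,\d x\le\int G(u-v)\,\d x$ for convex $G\ge 0$ with $G(0)=0$, by writing $G$ as a superposition of the elementary convex functions $s\mapsto(\pm s-a)_+$, using the translation property $(v+a)^t=v^t+a$ and the $L^1$ set inequality you already have; the choice $G(s)=|s|^p$ then gives (7). Second, in (6) the continuity of $t\mapsto\T_t(M)$ in measure for a general $M\in\M(\R)$ is not a formal consequence of the semigroup and interval axioms alone; it is part of the content of Brock's existence theorem for the family $\T_t$ and requires approximating $M$ by finite unions of intervals with quantitative control of the error uniformly in $t$, so as written this step is circular. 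Third, the P\'olya--Szeg\H{o}-type inequality \eqref{6-1} is left as a plan with two candidate strategies rather than a proof; of the two, the polarization route is the more tractable, but it requires knowing that each $u^t$ for \emph{finite} $t$ (not only the endpoint $u^\infty=u^*$) is a limit of finitely many polarizations, that polarization does not increase $\int G(|\nabla\cdot|)\,\d x$, and that this functional is lower semicontinuous along the approximation --- none of which is routine. Since \eqref{6-1} is precisely the property invoked in the proof of Proposition \ref{pro5}, this gap is material; the honest course, and the one the paper takes, is to cite \cite{Bro0, Bro1} for it rather than reprove it.
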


\begin{definition}[Local symmetry]\label{def5-5}
  Let $u\in C^1(\R^2)$ be a nonnegative function with compact support. If $y=(y_1, y')\in \R^2$ with
  \begin{equation*}
    0<u(y)<\sup u,\ \ \ \frac{\partial u}{\partial x_1}(y)>0,
  \end{equation*}
  and $\tilde{ y}$ is the (unique) point satisfying
  \begin{equation*}
    \tilde{y}=(\tilde{y_1}, y'),\ \ \ \tilde{y_1}>y_1,\ \ \ u(y)=u(\tilde{y})< u(s, y'),\ \ \forall\, s\in (y_1,\tilde{y_1}),
  \end{equation*}
  then
  \begin{equation*}
  \frac{\partial u}{\partial x_2}(y) =\frac{\partial u}{\partial x_2}(\tilde{y}),\ \ \   \frac{\partial u}{\partial x_1}(y)  =-\frac{\partial u}{\partial x_1}(\tilde{y}).
  \end{equation*}
  Then $u$ is called locally symmetric in the direction $x_1$.
\end{definition}
\begin{remark}\label{re100}
  As remarked by Brock (see Remark 6.1 in \cite{Bro1}), if $u$ is locally symmetric in direction $x_1$, then the following disjoint decomposition holds
  \begin{equation}\label{eq100}
    \left\{0<u<\sup u\right\}=\bigcup_{k=1}^{m}\left(U_1^k\cup U_2^k\right)\bigcup S.
  \end{equation}
  Here $U_1^k$ is some maximal connected component of $\{0<u<\sup u\}\cap \{\partial_{x_1}u>0\}$, $U_2^k$ is its reflection about some line $\{x_1=d_k\},\, d_k\in \R$, and we have
  \begin{equation*}
    \partial_{x_1}u=0\ \ \ \text{in}\ S,
  \end{equation*}
  and for every $(y_1, y')\in U_1^k$,
  \begin{equation*}
    u(y_1, y')=u(2d_k-y_1, y')<u(2d_k-y_1, s),\ \ \ \forall\, s\in (y_1, 2d_k-y_1),\ k=1, \cdots, m.
  \end{equation*}
   There can be a countable number of $U_1^k$'s, i.e., $m=+\infty$.
\end{remark}

We also need the following symmetry criterion due to Brock \cite{Bro1}.
\begin{lemma}[\cite{Bro1}, Theorem 6.2]\label{le1}
  Let $R>0$. Let $u\in H^{1}_0(B_R) \cap C(\overline{B_R})$ be such that $u>0$ in $B_R$ and $u$ is continuously differentiable on $\left\{x\in B_R: 0<u(x)<\sup u\right\}$. Suppose that
  \begin{equation*}
    \lim_{t\to 0}\frac{1}{t}\left(\int_{B_R} |\nabla u|^2\d x-\int_{B_R} |\nabla u^t|^2\d x\right)=0.
  \end{equation*}
  Then $u$ is locally symmetric in direction $x_1$.
\end{lemma}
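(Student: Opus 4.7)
The plan is to prove Lemma \ref{le1} by following Brock's continuous Steiner symmetrization (CStS) scheme, which proceeds via a level-set/co-area analysis relating the rate of change of the Dirichlet energy under CStS to a geometric ``asymmetry defect'' of the level sets. The overall structure has three stages: derive a rate-of-decrease formula; conclude slice-wise 1-D local symmetry from its vanishing; assemble the slice-wise information into 2-D local symmetry in the sense of Definition \ref{def5-5}.

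The heart of the proof is the rate-of-decrease formula. Represent the Dirichlet energy via the co-area formula
$$\int_{B_R}|\nabla u|^2\,\d x = \int_0^{\sup u}\!\left(\int_{\{u=c\}}|\nabla u|\,\d\mathcal{H}^1\right)\d c.$$
Using equimeasurability (Proposition \ref{pro6}(1)) and the interval property of $\T_t$, $\int_{B_R}|\nabla u^t|^2\,\d x$ admits an analogous representation in which, on each horizontal slice $x_2 = \mathrm{const}$, the contribution at level $c$ depends only on the endpoints of the 1-D super-level set $\{x_1 : u(x_1, x_2) > c\}$ and the values of $|\nabla u|$ thereon. Differentiating at $t = 0^+$ yields
$$\lim_{t\to 0^+}\frac{1}{t}\int_{B_R}\bigl(|\nabla u|^2-|\nabla u^t|^2\bigr)\,\d x \;=\; \int_\R \int_0^{\sup u}\mathcal{D}(c,x_2)\,\d c\,\d x_2,$$
where $\mathcal{D}(c,x_2)\ge 0$ is a sum over pairs of endpoints of $\{x_1:u(x_1,x_2)>c\}$ not yet symmetrically placed, each term involving both the matching defect $(|v_{x_2}'(a)|-|v_{x_2}'(b)|)^2$ at paired endpoints and a positive weight measuring the displacement from the current symmetry line of the slice.

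Under our hypothesis this integral vanishes, so $\mathcal{D}(c,x_2)=0$ for $\LL^1$-a.e.\ $(c,x_2)$. Propagating this by a 1-D argument along each slice, for $\LL^1$-a.e.\ $x_2$ the positive set of $v_{x_2}(x_1):=u(x_1,x_2)$ is a disjoint union of open intervals, each symmetric about its midpoint, on which $v_{x_2}$ is symmetric and unimodal with matching boundary slopes. Finally, the assembly step transfers this to the 2-D picture: using $u\in C^1(\{0<u<\sup u\})$, for each connected component $U_1^k$ of $\{\partial_{x_1}u>0\}\cap\{0<u<\sup u\}$ the reflection midpoints $d_k(x_2)$ depend continuously on $x_2$, and differentiating the slice-wise identity $u(2d_k(x_2)-x_1,x_2)=u(x_1,x_2)$ along $U_1^k$ forces $d_k'(x_2)\equiv 0$, so $d_k$ is constant. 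Defining $U_2^k$ as the reflection of $U_1^k$ about $\{x_1=d_k\}$ and $S = \{0<u<\sup u\}\setminus\bigcup_k(U_1^k\cup U_2^k)$ yields the decomposition of Remark \ref{re100} and the pointwise identities of Definition \ref{def5-5}.

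The main obstacle is justifying the rate-of-decrease formula at $t=0$ under only $C^1$ regularity on $\{0<u<\sup u\}$. The motion of super-level-set endpoints is prescribed by the interval property $[x-R,x+R]\mapsto[xe^{-t}-R,xe^{-t}+R]$ and is generically smooth, but endpoints of distinct subintervals of a slice may collide and merge, causing combinatorial jumps in the endpoint bookkeeping at a set of levels $c$ of a priori positive measure. The delicate point is to show that these merger events contribute only $o(t)$ to the energy change: one isolates regular levels via a Sard-type argument for $v_{x_2}$ (valid $\LL^1$-a.e.\ in $x_2$ by the $H^1_0$ bound and Fubini), estimates the measure of near-critical levels using the compact support and $v_{x_2}'\in L^2$, and dominates the singular contribution with the global Pólya-Szegő bound \eqref{6-1}. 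This careful level-by-level analysis, together with a uniform control of the smooth motion on regular levels, produces the clean nonnegative limit formula that drives the symmetry conclusion.
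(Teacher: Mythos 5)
The paper does not prove this lemma at all: it is imported verbatim from Brock \cite{Bro1} (Theorem 6.2 there), so there is no in-paper argument to compare yours against. Judged on its own terms, your proposal is a reasonable high-level reconstruction of Brock's strategy --- slice-wise level-set analysis of the energy under CStS, a nonnegative ``asymmetry defect'' whose vanishing forces the derivative-matching identities of Definition \ref{def5-5}, then assembly across slices --- but as written it is a plan rather than a proof. The entire analytic content of the lemma lives in the step you label ``the heart of the proof,'' and there you assert the rate-of-decrease formula with an unspecified integrand $\mathcal{D}(c,x_2)$ rather than derive it; the subsequent deduction that $\mathcal{D}=0$ a.e.\ implies the pointwise identities $\partial_{x_2}u(y)=\partial_{x_2}u(\tilde y)$ and $\partial_{x_1}u(y)=-\partial_{x_1}u(\tilde y)$ depends entirely on what $\mathcal{D}$ actually is, so nothing is yet established.

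Two more specific points. First, you claim an exact first-order expansion, $\lim_{t\to 0^+}t^{-1}\int(|\nabla u|^2-|\nabla u^t|^2)=\int\!\!\int\mathcal{D}$. That is stronger than what is needed and almost certainly not available under the stated regularity ($u$ only $C^1$ on $\{0<u<\sup u\}$, merely $H^1_0\cap C$ globally): Brock's argument only establishes a one-sided bound of the form $\liminf_{t\to 0^+}t^{-1}(\int|\nabla u|^2-\int|\nabla u^t|^2)\ge \mathcal{I}\ge 0$ with $\mathcal{I}$ an explicit integral of a nonnegative defect (obtained by approximation through piecewise linear functions), which combined with the hypothesis forces $\mathcal{I}=0$. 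You should weaken your formula to this liminf inequality, and then the merger/collision issues you flag become one-sided error terms rather than obstacles to an exact expansion. Second, your assembly step (constancy of the reflection abscissae $d_k$ on each component) proves the decomposition of Remark \ref{re100}, which is more than the lemma asks for; the conclusion of Lemma \ref{le1} is only the pointwise condition of Definition \ref{def5-5}, so that step is dispensable. In short: right skeleton, but the key inequality that carries the proof is postulated rather than proved, so the proposal as it stands has a genuine gap exactly where the difficulty is.
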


It can be proved that functions which are locally symmetric in \emph{every} direction are locally (radially) symmetric in the sense of Definition \ref{de1}.
\begin{lemma}[\cite{Bro1}, Theorem 6.1]\label{le2}
  Let $u$ be as in Lemma \ref{le1}. Suppose that for arbitrary rotations $x\mapsto y=(y_1, y')$ of the coordinate system, $u$ is locally symmetric in direction $y_1$. Then $u$ is locally symmetric (after extending by zero to $\R^2$). Moreover, the super-level sets $\{u>t\}$ $(t\ge 0)$ are countable unions of mutually disjoint balls.
\end{lemma}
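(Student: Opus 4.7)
The plan is to show that every connected component of the regular set $V \cap \{|\nabla u|>0\}$ (with $V=\{0<u<\sup u\}$) is an annulus on which $u$ is strictly decreasing and radial, and then assemble these pieces into the decomposition of Definition \ref{de1}.

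First, I would fix a connected component $W$ of $V \cap \{|\nabla u|>0\}$ and a point $x_0 \in W$, and set $\nu_0:=\nabla u(x_0)/|\nabla u(x_0)|$. For every unit vector $\nu$ in some neighborhood $\mathcal{N}$ of $\nu_0$ we have $\partial_\nu u(x_0)>0$, so applying the hypothesis in direction $\nu$ together with Remark \ref{re100} yields a unique maximal component $U_1^{k(\nu)}(\nu)$ of $V\cap\{\partial_\nu u>0\}$ containing $x_0$ and a perpendicular symmetry line $\ell(\nu)$. Let $R_\nu$ denote reflection across $\ell(\nu)$; it preserves $u$ on the pair $U_1^{k(\nu)}(\nu)\cup U_2^{k(\nu)}(\nu)$.

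Second, and this is the crux, I would show that all the lines $\ell(\nu)$ pass through a single point $z_W$. Pick two non-parallel directions $\nu_1,\nu_2\in\mathcal{N}$ and let $\{z_W\}=\ell(\nu_1)\cap\ell(\nu_2)$. The composition $R_{\nu_1}\circ R_{\nu_2}$ is a rotation around $z_W$ that preserves $u$ on a small neighborhood of $x_0$. Perturbing $\nu_1,\nu_2$ continuously, the resulting rotation angles form a dense subset of $S^1$, so $u$ is rotationally invariant about $z_W$ near $x_0$, i.e.\ locally radial. For any third direction $\nu_3\in\mathcal{N}$, the reflection $R_{\nu_3}$ must preserve this already-radial structure at $x_0$, which forces $\ell(\nu_3)\ni z_W$.

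Third, an open-closed argument within the connected set $W$ extends the local radial representation $u=U(|\cdot -z_W|)$ to all of $W$, producing $W=B_{R_W}(z_W)\setminus\overline{B_{r_W}(z_W)}$ for some $0\le r_W<R_W$, with $U\in C^1([r_W,R_W])$ and $U'<0$ on $(r_W,R_W)$. Running over all connected components of $V\cap\{|\nabla u|>0\}$, one obtains an at-most-countable family $\{A_k = B_{R_k}(z_k)\setminus\overline{B_{r_k}(z_k)}\}_{k\in K}$ of pairwise disjoint open subsets of $\R^2$; the inner-ball condition (6) in Definition \ref{de1} follows from the continuity of $u$ and the strict monotonicity of each $U_k$. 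For the super-level-set assertion, each connected component of $\{u>t\}$ has boundary contained in $\{u=t\}$ and, by the radial structure on the outermost $A_k$ meeting this component, the boundary is a single circle $\partial B_{R_k}(z_k)$; hence the component is the ball $B_{R_k}(z_k)$.

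The main obstacle is the center-finding step. Local symmetry supplies reflection only within the (a priori direction-dependent) pair $U_1^{k(\nu)}(\nu)\cup U_2^{k(\nu)}(\nu)$, so one must ensure that the reflections $R_\nu$ simultaneously preserve $u$ on a common open neighborhood of $x_0$ as $\nu$ varies. This continuity of $\nu\mapsto\ell(\nu)$ should follow from the $C^1$ regularity of $u$ on $V$ via an implicit-function argument applied to the partner-point map $\nu\mapsto R_\nu(x_0)$; verifying it, together with the claim that the iterated rotations stay within the regular region long enough to justify density on $S^1$, is the technical heart of the proof.
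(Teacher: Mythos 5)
The paper does not actually prove this lemma: it is quoted verbatim from Brock \cite{Bro1}, Theorem 6.1, so there is no in-paper argument to compare against, and your proposal must be judged on its own. Your outline identifies the correct target (each connected component of $V\cap\{\nabla u\neq 0\}$ should be an annulus on which $u$ is radial and strictly decreasing, and these components are exactly the $A_k$ of Definition \ref{de1}), but the center-finding step that you yourself flag as ``the technical heart'' is not a deferrable verification --- it is the entire content of Brock's theorem, and as written your argument for it does not go through.

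Two concrete problems. First, $R_{\nu_2}(x_0)$ is the $\nu_2$-partner point $\tilde x_0$, which lies on the far side of the level set $\{u=u(x_0)\}$, in the region $\{\partial_{\nu_2}u<0\}$. For $R_{\nu_1}\circ R_{\nu_2}$ to preserve $u$ you need $R_{\nu_1}$ to act as a symmetry of $u$ at $\tilde x_0$, i.e.\ you need $\tilde x_0$ to lie in the \emph{same} reflection pair $U_1^{k(\nu_1)}(\nu_1)\cup U_2^{k(\nu_1)}(\nu_1)$ as $x_0$. The decomposition of Remark \ref{re100} allows countably many pairs with distinct symmetry lines $\{y_1=d_k\}$, and local symmetry in direction $\nu_1$ is a statement about $\nu_1$-partners only; nothing forces $x_0$ and its $\nu_2$-partner into the same $\nu_1$-pair, so the composition need not preserve $u$ anywhere. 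Second, even granting that each composition is a $u$-preserving rotation, its center is $\ell(\nu_1)\cap\ell(\nu_2)$, which a priori moves as you perturb the pair --- concurrency of the lines $\ell(\nu)$ is precisely what you are trying to establish, so invoking a common center $z_W$ for the perturbed rotations is circular. Moreover, with $\nu_1,\nu_2$ confined to a small neighborhood $\mathcal{N}$ of $\nu_0$, the available rotation angles (twice the angle between $\nu_1$ and $\nu_2$) fill only a small interval about $0$, not a dense subset of the circle; density would require iterating the rotation, and each iterate must be shown to remain in the region where the relevant reflections are symmetries of $u$. Until these points are resolved, the proposal is a strategy rather than a proof, and the correct course here is what the paper does: rely on Brock's published argument.
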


We are now in a position to prove Proposition \ref{pro5}.

\smallskip
\noindent \emph{Proof of Proposition \ref{pro5}}\,:\, We choose a number $R>0$ such that $\overline{\Omega_2}\subset B_R$. We extend $u$ by $1$ inside $\Omega_1$ and by $0$ outside $\Omega_2$, still denoted by $u$. We denote by $L$ the Lipschitz constant of $u$. Let $u^t$, ($0\le t\le +\infty$), denote the CStS of $u$ with respect to $x_1$. By Proposition \ref{pro6}, $u^t$ is Lipschitz continuous with Lipschitz constant less than or equal to $L$, $\text{supp}\, u^t\subset B_R$, and
\begin{equation}\label{5-1}
  |u^t(x)-u(x)|\le LR\,t,\ \ \ \forall\,x\in \R^2,\ t\in[0, +\infty].
\end{equation}
Set $d(x):=\inf\left\{|x-z|: z\in \partial \Omega_1 \right\}$. Clearly, it holds
\begin{equation}\label{5-2}
  \lim_{s\to 1}\sup \left\{d(x): s\le u(x)<1  \right\}=0.
\end{equation}
Let $\kappa:=2RL$. We claim that for every $t\in [0, +\infty)$, $u^t+kt\ge 1$ on $\overline{ \Omega_1}$. Indeed, by \eqref{5-2}, we have for every $x\in \overline{ \Omega_1}$,
\begin{equation*}
  u^t(x)\ge u(x)-|u^t(x)-u(x)|\ge 1-RLt\ge 1-\kappa t.
\end{equation*}
For convenience, we set
\begin{equation*}
  \begin{split}
      M_1(t) & :=\left\{x\in D: 1-\kappa t\le u(x) <1 \right\} \\
      M_2(t) & :=\left\{x\in D: 1-\kappa t\le u^t(x) <1 \right\},\ \ \ (t\in(0, +\infty)).
  \end{split}
\end{equation*}
By Proposition \ref{pro6}, we have
\begin{equation}\label{5-3}
  \LL^2(M_1(t))\ge \LL^2(M_2(t)),\ \ \ \forall\,t\in(0, +\infty).
\end{equation}
In view of \eqref{5-1} and \eqref{5-2}, it is easy to see that
\begin{equation}\label{5-4}
   \lim_{t\to 0}\sup \left\{d(x): x\in M_1(t)\cup M_2(t)  \right\}=0.
\end{equation}
We now show that there exists a constant $C_0>0$ such that
\begin{equation}\label{5-5}
  \LL^2(M_1(t))\le C_0\, t,\ \ \ (0<t<+\infty).
\end{equation}
By \eqref{5-4} and the fact that $|\nabla u|=c$ on $\partial \Omega_1$, there exists a small $t_0\in(0, 1/\kappa)$ such that
\begin{equation}\label{5-6}
  |\nabla u(x)|\ge c/2\ \ \ \text{if}\ \, x\in M_1(t_0).
\end{equation}
By the Implicit Function Theorem, we have that for every $s\in [1-\kappa t_0, 1)$, the level set $\left\{u=s\right\}$ is a $C^1$ Jordan curve in $D$. Recall that
\begin{equation}\label{5-7}
  -\Delta u =f(u)\ \ \ \text{in}\ \, D.
\end{equation}
Integrating \eqref{5-7} over the open set $\left\{x\in D: s<u(x)<1 \right\}$, we get
\begin{equation*}
  -\int_{\{u=1\}}|\nabla u|\d \mathcal{H}_1(x)+\int_{\{u=s\}} |\nabla u|\d \mathcal{H}_1(x)=\int_{\{s<u<1\}} f(u)\d x.
\end{equation*}
By \eqref{5-6} this implies
\begin{equation*}
  \int_{\{u=s\}} \d \mathcal{H}_1(x)\le C_1
\end{equation*}
for some positive number $C_1$.  Using the co-area formula (see \cite{Evans}), we obtain
\begin{equation*}
\begin{split}
    \LL^2(M_1(t)) & \le \frac{2}{c} \int_{\{1-\kappa t\le u<1\}} |\nabla u|\d x =\frac{2}{c} \int_{1-\kappa t}^{1}\left( \int_{\{u=s\}} \d \mathcal{H}_1(x) \right) \d s \\
     & \le \frac{2C_1}{c}\kappa t,\ \  (t\in(0, t_0]),
\end{split}
\end{equation*}
and the assertion follows.

Set $w^t=\min \{u^t, 1-\kappa t\}$ and $w^0=\min \{u, 1-\kappa t\}$ for $t\in (0, 1/\kappa)$. We check that $w^t\equiv 1-\kappa t$ in $\overline{\Omega_1}$ and $w^t=0$ on $\partial B_R$. Since
\begin{equation*}
    -\Delta u =f(u)\ \ \ \text{in}\ \, B_R\backslash \overline{\Omega_1}=: U,
\end{equation*}
we have
\begin{equation}\label{5-15}
  0=\int_U \nabla u\cdot \nabla (w^t-w^0)\d x-\int_{U}f(u)(w^t-w^0)\d x=:I_1(t)-I_2(t),\ \ \ t\in [ 0, 1/\kappa).
\end{equation}
By the symbol $o(t)$ we denote any function satisfying $\lim_{t\to 0} o(t)/t=0$.
We first claim that
\begin{equation}\label{5-8}
  I_2(t)\ge o(t).
\end{equation}
To show \eqref{5-8}, we split $I_2(t)$ into the following two parts:
\begin{equation*}
  I_2(t)=\int_{M_1(t)}[f(u)-f(1-\kappa t)](w^t-w^0)\d x+\int_{U}f(w^0)(w^t-w^0)\d x=:I_{21}(t)+I_{22}(t).
\end{equation*}
By \eqref{5-1} we have
\begin{equation}\label{5-9}
  |w^t(x)-w^0(x)|\le LR\,t,\ \ \ \forall\, x\in \R^2.
\end{equation}
Set $\eta:=\sup_{t\in [0,1]}f(t)$. Using \eqref{5-5}, we get
\begin{equation}\label{5-11}
  |I_{21}(t)|\le 2\eta LRC_0t^2.
\end{equation}
Set $F(t):=\int_{0}^{t}f(s)\d s$. By Proposition \ref{pro6}, we have
\begin{equation*}
  w^t=\min \{u^t, 1-\kappa t\}=\left(\min \{u, 1-\kappa t\}\right)^t=(w^0)^t,
\end{equation*}
and
\begin{equation*}
  \int_{B_R} F(w^0)\d x=\int_{B_R}F(w^t)\d x.
\end{equation*}
Since $w^t\equiv 1-\kappa t$ in $\overline{\Omega_1}$, we infer that
\begin{equation}\label{5-10}
0=\int_U\left[ F(w^t)-F(w^0)\right]\d x=\int_U \int_{0}^{1}f(w^0+\theta(w^t-w^0))\d \theta\,(w^t-w^0)\d x.
\end{equation}
Set
\begin{equation*}
  \lambda(t):=\sup \left\{|f(w^0(x)+\theta(w^t(x)-w^0(x)))-f(w^0(x))|: x\in B_R,\  \theta\in [0,1] \right\}.
\end{equation*}
 In view of \eqref{5-9}, one can verify that $\lambda(t)=o(1)$ as $t\to 0$. Hence we have
\begin{equation}\label{5-8-1}
  I_{22}(t)\ge \int_U \int_{0}^{1}\left[f(w^0)-f(w^0+\theta(w^t-w^0)) \right]\d \theta(w^t-w^0)\d x\ge -\lambda(t)\cdot LRt\cdot \LL^2(U).
\end{equation}
Now \eqref{5-8} is just \eqref{5-11} and \eqref{5-8-1} combined.

Next we estimate $I_1(t)$. Notice that
\begin{equation}\label{5-12}
\begin{split}
   I_1(t) & =\int_U \nabla u\cdot \nabla (w^t-w^0)\d x=\int_{B_R\backslash M_2(t)} \nabla u\cdot \nabla w^t\d x-\int_{B_R\backslash M_1(t)} \nabla u\cdot \nabla w^0\d x \\
     & \le \frac{1}{2} \int_{B_R\backslash M_2(t)}|\nabla w^t|^2\d x+\frac{1}{2} \int_{B_R\backslash M_2(t)}|\nabla u|^2\d x-\int_{B_R\backslash M_1(t)} |\nabla u|^2\d x \\
     &= \frac{1}{2} \int_{B_R}|\nabla w^t|^2\d x- \frac{1}{2} \int_{B_R}|\nabla w^0|^2\d x+ \frac{1}{2} \int_{M_1(t)}|\nabla u|^2\d x- \frac{1}{2} \int_{M_2(t)}|\nabla u|^2\d x.
\end{split}
\end{equation}
Using \eqref{5-3}, \eqref{5-4} and the fact that $|\nabla u|=c$ on $\partial \Omega_1$, we obtain
\begin{equation}\label{5-13}
   \frac{1}{2} \int_{M_1(t)}|\nabla u|^2\d x- \frac{1}{2} \int_{M_2(t)}|\nabla u|^2\d x  \le \frac{1}{2} \int_{M_1(t)}(|\nabla u|^2-c^2)\d x- \frac{1}{2} \int_{M_2(t)}(|\nabla u|^2-c^2)\d x=o(t).
\end{equation}
Combining \eqref{5-15}, \eqref{5-8}, \eqref{5-12} and \eqref{5-13}, we see that
\begin{equation}\label{5-16}
  \int_{B_R}|\nabla w^t|^2\d x-  \int_{B_R}|\nabla w^0|^2\d x\ge o(t).
\end{equation}
Now fix $\varepsilon\in (0, 1)$. For functions $v$ we write $v_+:=\max \{v, 0\}$. For each $t\in [0, \varepsilon/\kappa)$, we set
\begin{equation*}
   h^0  =\min\left\{[u-(1-\varepsilon)]_+, 1-\kappa t  \right\},\ \ \  h^t=\min\left\{[u^t-(1-\varepsilon)]_+, 1-\kappa t  \right\}\ \ \ \text{and}\ \ \ g^t=\min\{u^t, 1-\varepsilon\}.
\end{equation*}
It follows from \eqref{6-1} that
\begin{equation*}
  \int_{\{1-\varepsilon<u^t<1-\kappa t\}}|\nabla u^t|^2\d x=\int_{B_R}|\nabla h^t|^2\d x \le \int_{B_R}|\nabla h^0|^2\d x=\int_{\{1-\varepsilon<u<1-\kappa t\}}|\nabla u|^2\d x.
\end{equation*}
On combining this with \eqref{6-1} and \eqref{5-16}, we deduce that
\begin{equation*}
  \begin{split}
     0&\ge \int_{B_R}|\nabla g^t|^2\d x-  \int_{B_R}|\nabla g^0|^2\d x   \\
       & =\left( \int_{B_R}|\nabla w^t|^2\d x-  \int_{B_R}|\nabla w^0|^2\d x \right)-\left(\int_{B_R}|\nabla h^t|^2\d x- \int_{B_R}|\nabla h^0|^2\d x \right)\\
       &\ge  \int_{B_R}|\nabla w^t|^2\d x-  \int_{B_R}|\nabla w^0|^2\d x \ge o(t),
  \end{split}
\end{equation*}
which implies
\begin{equation}\label{6-2}
  \lim_{t\to 0}\frac{1}{t}\left(\int_{B_R}|\nabla g^t|^2\d x-  \int_{B_R}|\nabla g^0|^2\d x\right)=0.
\end{equation}
In view of Lemma \ref{le1}, $g^0=\min\{u, 1-\varepsilon\}$ is locally symmetric in direction $x_1$. Note that the same estimate \eqref{6-2} can be established for CStS in arbitrary directions. It follows that $\min\{u, 1-\varepsilon\}$ is locally symmetric in every direction, and hence, by Lemma \ref{le2}, $\min\{u, 1-\varepsilon\}$ is locally symmetric and the super-level sets $\{u>t\}$ $(t\ge 0)$ are countable unions of mutually disjoint disks. Since $\varepsilon$ is arbitrary, $u$ must also be locally symmetric, and $D$ is an annulus. The proof is thus complete.

\section{Proof of Theorem \ref{th11}}\label{s6}
In this section, we give the proof of Theorem \ref{th11}. The proof is somewhat more intricate and necessitates leveraging the earlier results. Furthermore, relying merely on the continuous Steiner symmetrization techniques is insufficient to achieve the desired conclusion; hence, it is necessary to complement it with other methods.

We first recall a single-direction version of the Proposition \ref{pro2}. It is actually a special case of Theorem 7.2 in \cite{Bro1}, but it suffices for our purposes here. Recall the definition of local symmetry in Definition \ref{def5-5}.

\begin{proposition}\label{pro100}
  Let $D$ be a $C^2$ non-empty simply connected bounded domain of $\mathbb{R}^2$, such that it is Steiner symmetric with respect to the direction $\mathbf{e}_1=(1, 0)$ and the symmetry line is given by $\Gamma_0=\left\{x\in \R^2: x_1=0 \right\}$. Let $f\in L^\infty([0, +\infty))$ be such that
\begin{equation*}
  f=f_1+f_2+f_3,
\end{equation*}
where $f_1$ is a continuous function, $f_2$ is a non-increasing function, and $f_3$ is a non-decreasing function. Let $u\in H^1_0(D)\cap C(\overline{D})$ be a weak solution of the following problem
	\begin{align*}
		\begin{cases}
			-\Delta u=f( u),\ \ u\ge 0,&\text{in}\ \ D,\\
             u=0,\ \ &\text{on}\  \ \partial D.
\end{cases}
	\end{align*}
In addition, suppose that $u\in C^1(V)$ with $V:=\{x\in D: 0<u(x)<\sup_{D} u\}$.
  The $u$ is locally symmetric in direction $x_1$ (after extending  by zero to $\R^2$).
\end{proposition}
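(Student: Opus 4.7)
The plan is to reduce Proposition~\ref{pro100} to Lemma~\ref{le1} via a continuous Steiner symmetrization (CStS) argument in direction $x_1$, following the same template used in the proof of Proposition~\ref{pro5}. First I would extend $u$ by zero to $\R^2$; since $u \in H^1_0(D) \cap C(\overline{D})$, the extension lies in $H^1(\R^2)$ and has compact support in $\overline{D}$. Pick $R>0$ with $\overline{D} \subset B_R$. Because $D$ is Steiner symmetric about $\Gamma_0$ in direction $x_1$, the standard Steiner symmetrization of $D$ equals $D$, and therefore property~(4) of Proposition~\ref{pro6} gives $D^t = D$ for every $t \in [0, +\infty]$; combined with monotonicity, this yields $\text{supp}(u^t) \subset \overline{D} \subset B_R$ for all $t$, ensuring $u^t \in H^1_0(B_R) \cap C(\overline{B_R})$.

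In view of Lemma~\ref{le1}, to conclude that $u$ is locally symmetric in direction $x_1$ it suffices to prove
\begin{equation*}
\lim_{t \to 0^+}\frac{1}{t}\Bigl(\int_{B_R}|\nabla u|^2\,dx - \int_{B_R}|\nabla u^t|^2\,dx\Bigr) = 0.
\end{equation*}
The direction $\ge 0$ is immediate from property~(10) of Proposition~\ref{pro6} with $G(s)=s^2$. The matching upper bound $o(t)$ is the technical core. Testing the weak equation $-\Delta u = f(u)$ against $u - u^t \in H^1_0(B_R)$ and using Young's inequality $2\nabla u \cdot \nabla u^t \le |\nabla u|^2 + |\nabla u^t|^2$, I obtain
\begin{equation*}
\tfrac{1}{2}\Bigl(\int_{B_R}|\nabla u|^2\,dx - \int_{B_R}|\nabla u^t|^2\,dx\Bigr) \le \int_{B_R} f(u)(u - u^t)\,dx = \sum_{i=1}^{3}\int_{B_R} f_i(u)(u - u^t)\,dx,
\end{equation*}
so the plan is to show each of the three pieces on the right is $o(t)$.

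For the continuous piece $f_1$, Cavalieri's principle (property~(5)) applied to $F_1(s) = \int_0^s f_1(\tau)\,d\tau$ gives $\int F_1(u^t)\,dx = \int F_1(u)\,dx$ exactly, and combined with the uniform bound $\|u^t - u\|_\infty \le LR\,t$ from property~(10) (after a Lipschitz regularization) plus the uniform continuity of $f_1$ on the range of $u$, this yields the $o(t)$ estimate. For the non-decreasing $f_3$ (respectively, non-increasing $f_2$), the Hardy--Littlewood inequality (property~(8)) applied to the pair $(f_3(u), u)$ (resp.\ $(-f_2(u), u)$) delivers one-sided inequalities that, together with the same $L^\infty$ continuity-in-$t$ bound, produce the desired first-order vanishing.

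The main obstacle I anticipate is the quantitative $o(t)$ control for the monotone pieces $f_2$, $f_3$: the Hardy--Littlewood inequality and the monotonicity provide only qualitative sign information, and extracting a first-order-in-$t$ bound requires a careful truncation at a level $\sup_D u - \varepsilon$, mirroring the device $g^t = \min\{u^t, 1-\varepsilon\}$ used in the proof of Proposition~\ref{pro5}, followed by sending $\varepsilon \to 0^+$. Once this derivative identity is verified, Lemma~\ref{le1} implies that $u$ is locally symmetric in direction $x_1$ (after extension by zero to $\R^2$), which is exactly the conclusion of Proposition~\ref{pro100}.
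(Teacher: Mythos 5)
First, a point of comparison: the paper does not actually prove Proposition \ref{pro100} --- it is quoted as a special case of Theorem 7.2 of \cite{Bro1}, so your proposal is really an attempt to reconstruct Brock's theorem. Your overall architecture is the right one and matches both Brock's strategy and the template of the paper's proof of Proposition \ref{pro5}: extend by zero, use $D^t=D$ from the Steiner symmetry of $D$, reduce to the energy criterion of Lemma \ref{le1}, get the lower bound from the Dirichlet-integral inequality \eqref{6-1}, and get the upper bound by testing the equation with $u-u^t$ and Young's inequality, treating the continuous piece $f_1$ exactly as in the estimate of $I_{22}$ in the proof of Proposition \ref{pro5}. That part is sound (modulo the fact that property (10) of Proposition \ref{pro6} is stated for Lipschitz $u$, while here $u$ is only assumed $C^1(V)\cap C(\overline D)$; the ``Lipschitz regularization'' you mention is nontrivial to commute with the equation and is left undone).

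The genuine gap is the non-decreasing piece $f_3$, and the tool you name does not close it. Writing $F_i(s)=\int_0^s f_i$, monotonicity gives $F_2(u)-F_2(u^t)\ge f_2(u)(u-u^t)$ pointwise (concave primitive), so Cavalieri yields $\int f_2(u)(u-u^t)\,\d x\le 0$ directly --- no Hardy--Littlewood needed. But for $f_3$ the same computation runs the other way: $F_3$ is convex, so $\int f_3(u)(u-u^t)\,\d x\ge 0$, which is exactly the sign you must beat. The Hardy--Littlewood inequality (property (8)) combined with commutativity compares $\int f_3(u)\,u\,\d x$ with $\int f_3(u^t)\,u^t\,\d x$, whereas the quantity you need to control is $\int f_3(u)\,u^t\,\d x$; the discrepancy $\int[f_3(u^t)-f_3(u)]\,u^t\,\d x$ is not $o(t)$ by any uniform-continuity argument, since $f_3$ may jump. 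This is precisely where Brock's Theorem 7.2 requires its most delicate analysis (a layer-cake decomposition of $f_3$ into indicator functions of super-level sets together with fine measure estimates on $\{u>c\}^t\,\Delta\,\{u>c\}$), and your proposal acknowledges the obstacle without supplying the argument. As it stands, the proof is a correct reduction plus an unproven key estimate; citing \cite{Bro1} for that estimate, as the paper does, would be the honest way to finish.
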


\subsection{Proof of Theorem \ref{th11}}
Firstly, if the boundary of the domain consists entirely of stagnation points, i.e., $\mv=0$ on $\partial D$, then the conclusion clearly follows from Theorem \ref{th3}. In this case, we can even assert that the domain $D$ is a disk and $\mv$ is a circular flow. Therefore, it remains to consider the case when $\{x\in \partial D: |\mathbf{v}(x)|=0\}\subsetneq \partial D$. Let $u\in C^3(\overline{D})$ be the corresponding stream function of $\mv$; see Proposition \ref{pro1}. We may also assume, without loss of generality, that
\begin{equation*}
    u=0\ \, \text{on} \  \partial D,\ \ \ u(z)=1,\ \ \ \text{and}\ \ \ 0<u<1\ \, \text{in}\ D\backslash \{z\}.
\end{equation*}
By Proposition \ref{pro1}, and using the continuity of $\Delta u$, we see that there exists a function $f\in C([0,1])\cap C^1\left([0,1)\right)$ such that
  \begin{equation}\label{199}
   \Delta u+f(u)=0\ \ \ \text{in}\ \ \overline{D}.
  \end{equation}
According to Proposition \ref{pro100}, we initially infer that $u$ is locally symmetric in direction $x_1$. In view of Remark \ref{re100}, the following disjoint decomposition holds
  \begin{equation}\label{eq102}
    D\backslash\{z\}=\bigcup_{k=1}^{m}\left(U_1^k\cup U_2^k\right)\bigcup S.
  \end{equation}
  Here $U_1^k$ is some maximal connected component of $\left\{x\in D\backslash\{z\}: \partial_{x_1}u(x)>0 \right\}$, $U_2^k$ is its reflection about some line $\{x_1=d_k\},\, d_k\in \R$, and we have $ \partial_{x_1}u=0$ in $S$.

 Our goal now is to show that $\partial_{x_1} u>0$ in $\{x\in D: x_1<0\}$, and hence $m=1$ in \eqref{eq102}. This further implies that $\partial_{x_1}u\not=0$ in $D\backslash \Gamma_0$, and therefore $z\in \Gamma_0$.  We will use the moving plane method to establish the desired result (see \cite{HN,Rei} for similar arguments). For $\lambda\in \R$, we denote
\begin{equation*}
  \Gamma_\lambda=\{x\in \R^2: x_1=\lambda\},\ \ \ \mathcal{H}_\lambda=\{x\in D: x_1<\lambda\},
\end{equation*}
and, for $x\in \R^2$,
\begin{equation*}
  \mathcal{R}_\lambda(x)=x^\lambda=(x_1-2\lambda, x_2).
\end{equation*}
In other words, $\mathcal{R}_\lambda$ is the orthogonal reflection with respect to the line $\Gamma_\lambda$.

    Assume now that $z=(z_1, z_2)$ with $z_1\ge 0$. If $z_1=0$, the proof is finished. Thus, it suffices to eliminate the remaining possibilities. Let us assume, for a contradiction, that $z_1>0$. Let $\varepsilon\in (0, 1)$ be a small number such that $\Im_\varepsilon=\{x\in D: u(x)=1-\varepsilon\} \subset \{x\in D: x_1>0\}$. Recall that the streamline $\Im_\varepsilon$ is a $C^2$ Jordan curve surrounding $z$ in $D$ and $\text{dist}(\Im_\varepsilon, z)\to 0$ as $\varepsilon \to 0^+$; see \cite{HN}. Set $U_\varepsilon=\{x\in D: u(x)>1-\varepsilon\}$. Let $V_\varepsilon=\{x\in D: 0<u(x)<1-\varepsilon\}$ be the doubly connected domain located between $\partial D$ and $\Im_\varepsilon$. Notice that
 \begin{equation*}
   \begin{cases}
      \Delta u +f(u)=0\ \ \ \mbox{in}\ V_\varepsilon, &  \\
     \ 0<u  <1\ \ \ \mbox{in}\ V_\varepsilon, &  \\
     u=0\ \text{on}\ \partial D,\ \ \ u   =1\ \text{on}\ \Im'_\varepsilon, &
   \end{cases}
 \end{equation*}
 where $f\in C^1([0, 1-\varepsilon])$. Denote $\bar{\lambda}=\min_{x\in D} x_1$. For $\lambda\in (\bar{\lambda}, 0)$, let us introduce the comparison function
  \begin{equation*}
    \Phi_\lambda(x):=u(x^\lambda)-u(x),\ \ \ x\in \mathcal{H}_\lambda,
  \end{equation*}
  which is well-defined by virtue of the Steiner symmetry of $D$. Our strategy is to show the following properties for $\Phi_\lambda$ for all $\lambda\in(\bar{\lambda}, 0)$:
\begin{align}
\label{201}  \Phi_\lambda> 0 &\ \ \text{in}\ \Sigma_\lambda=\mathcal{H}_\lambda\backslash  \mathcal{R}_\lambda(\overline{U_\varepsilon}) , \\
\label{202}  \partial_{x_1} \Phi_\lambda <0 &\ \ \text{on}\ \Gamma_\lambda\cap D.
\end{align}
This will be done by an initial step for $\lambda\in (\bar{\lambda}, \bar{\lambda}+\tau)$ with $\tau>0$ small and by a continuation step for all $\lambda\in(\bar{\lambda}, 0)$.

 Let us start with the \textbf{initial step}: Recalling \eqref{199}, we have that
  \begin{equation*}
    \Delta \Phi_\lambda+c_\lambda \Phi_\lambda =0\ \ \ \text{in}\ \overline{ \Sigma_\lambda },
  \end{equation*}
  where
  \begin{equation}\label{204}
    c_\lambda(x)=\begin{cases}
                   \displaystyle\frac{f(u(x^\lambda))-f(u(x))}{u(x^\lambda)-u(x)}, & \mbox{if}\ \ u(x^\lambda)\not=u(x), \\
                   0, & \mbox{if}\  \ u(x^\lambda)=u(x).
                 \end{cases}
  \end{equation}
 We check that $c_\lambda\in L^\infty(\Sigma_\lambda)$. Notice that $\Phi_\lambda \ge\not\equiv 0$ on the boundary of each connected component of $\Sigma_\lambda$. For $\lambda$ larger than $\bar{\lambda}$ and close to $\bar{\lambda}$, it follows for instance from the maximum principle in sets with bounded diameter and small Lebesgue measure, and then from the strong maximum principle (see, e.g., \cite{Fra}), that $\Phi_\lambda<0$ in $\Sigma_\lambda$. Furthermore, considering that $\Phi_\lambda(x)=0$ on $\Gamma_\lambda\cap D$, by the Hopf lemma, we observe that $\partial_{x_1}\Phi_\lambda(x)=-2\partial_{x_1}u(x)<0$ for all $x\in\Gamma_\lambda\cap D$.

\textbf{Continuation step}: By the initial step the following quantity $\lambda_*$ is well defined
\begin{equation*}
  \lambda_*=\sup\{\lambda\in (\bar{\lambda}, 0):\, \Phi_{\lambda'}>0\ \text{in}\ \Sigma_{\lambda'}\ \text{for all}\ \lambda'\in(\bar{\lambda}, \lambda)\}.
\end{equation*}
Moreover, by the Hopf lemma, we see that both \eqref{201} and \eqref{202} hold for all $\lambda\in (\bar{\lambda}, \lambda_*)$. Our intention is to show $\lambda_*=0$. Suppose $\bar{\lambda}<\lambda_*<0$. Notice first that $\Phi_{\lambda_*}\ge 0$ in $\Sigma_{\lambda_*}$ by continuity. On the other hand, $\Phi_{\lambda_*} \ge\not\equiv 0$ on the boundary of each connected component of $\Sigma_{\lambda_*}$. From \cite{Fra}, there exists $\delta>0$ such that the weak maximum principle holds in any open set $V'\subset V_\varepsilon$ for the solutions $\Phi\in C^2(V')\cap C(\overline{V'})$ of $\Delta \Phi+c\Phi\le 0$ in $V'$ with $\Phi\ge 0$ on $\partial V'$ and $\|c\|_{L^\infty(V')}\le M$, as soon as $\text{meas}(V')\le \delta$. Let $K$ be a compact subset of $\Sigma_{\lambda_*}$ such that
\begin{equation*}
  \text{meas}(\Sigma_{\lambda_*}\backslash K)<\frac{\delta}{2}.
\end{equation*}
Since $\min_{K}\Phi_{\lambda_*}>0$, it follows that there exists $\tilde{\lambda}\in (\lambda_*, 0)$ such that, for all $\lambda\in [\lambda_*, \tilde{\lambda}]$,
\begin{equation*}
  \min_K\Phi_{\lambda}>0,\ \ \ \partial (\Sigma_{\lambda}\backslash K)=\partial \Sigma_{\lambda} \cup \partial K\ \ \ \text{and}\ \ \ \text{meas}(\Sigma_{\lambda}\backslash K)<\delta.
\end{equation*}
For any such $\lambda\in[\lambda_*, \tilde{\lambda}]$, we have that $\Phi_\lambda \ge\not\equiv 0$ on the boundary of each connected component of $\Sigma_{\lambda}\backslash K$. Therefore $\Phi_\lambda \ge 0$ in $\Sigma_{\lambda}\backslash K$, and hence $\Phi_\lambda \ge 0$ in $\Sigma_\lambda$. By further applying the strong maximum principle, we obtain that $\Phi_\lambda > 0$ in $\Sigma_\lambda$. This contradicts the definition of $\lambda_*$. As a conclusion, $\lambda_*=0$ and \eqref{201} and \eqref{202} both hold for all $\lambda\in (\bar{\lambda},0)$. In particular, we have $\partial_{x_1} u>0$ in $\{x\in D: x_1<0\}$, and the proof is thus complete.

\begin{remark}
We note that the above proof can also be applied, with minor modifications, to domains with lower regularity, such as piecewise smooth domains. In particular, if the domain is enclosed by a regular polygon, then the unique interior stagnation point of steady flows must be the center of the regular polygon.
\end{remark}

\subsection*{Data Availability} Data sharing is not applicable to this article as no datasets were generated or analyzed during the current study.

\subsection*{Declarations}

\smallskip
\ \ \\

\noindent \textbf{Conflict of interest} The authors declare that they have no conflict of interest.


\bigskip

\bigskip


	\phantom{s}
	\thispagestyle{empty}

\end{document}